\theoremstyle{plain}
\newtheorem{theorem}{Theorem}[section]
\newtheorem{corollary}[theorem]{Corollary}
\newtheorem{lemma}[theorem]{Lemma}
\newtheorem{proposition}[theorem]{Proposition}
\theoremstyle{definition}
\newtheorem{definition}[theorem]{Definition}
\newtheorem{exmp}[theorem]{Example}
\newtheorem{remark}[theorem]{Remark}
\renewenvironment{proof}[1][Proof. ]{%
    \noindent\textit{#1}%
}{}
\renewcommand{\qed}{%
    \hfill$\blacksquare$%
}
\def\P{\mathbbm{P}}
\def\E{\mathbbm{E}}
\def\V{\text{Var}}
\def\L{\mathcal{L}}
\def\N{\mathbbm{N}}
\def\Z{\mathbbm{Z}}
\def\R{\mathbbm{R}}
\def\1{\mathbbm{1}}
\def\L{\mathcal{L}}
\def\J{\boldsymbol{\mathfrak{J}}}
\def\K{\boldsymbol{\mathfrak{K}}}
\def\cbp{\{\tilde{Z}_n\}}
\def\psdbp{\{Z_n\}}
\def\zt{\tilde{Z}}
\def\xt{\tilde{\xi}}
\def\pt{\tilde{\phi}}
\def\mt{\tilde{m}}
\def\st{\tilde{\sigma}}
\def\poi{\text{Poi}}
\def\bin{\text{Bin}}
\def\ber{\text{Ber}}
\def\geom{\text{Geom}}
\def\nb{\text{NB}}
\def\zig{\text{ZIG}}
\def\zip{\text{ZIP}}
\begin{document}

\title{Linking Population-Size-Dependent and Controlled Branching Processes}
\author{
  Peter Braunsteins\footnote{School of Mathematics and Statistics, University of New South Wales, email: \texttt{p.braunsteins@unsw.edu.au}, ORCID: 0000-0003-1864-0703}, Sophie Hautphenne\footnote{School of Mathematics and Statistics, University of Melbourne, email: \texttt{sophiemh@unimelb.edu.au}, ORCID: 0000-0002-8361-1901}, and James Kerlidis\footnote{School of Mathematics and Statistics, University of Melbourne, email: \texttt{jkerlidis@student.unimelb.edu.au}, ORCID: 0009-0004-4904-0262}
}
\date{}

\maketitle

\begin{abstract}
    Population-size dependent branching processes (PSDBP) and controlled branching processes (CBP) are two classes of branching processes widely used to model biological populations that exhibit logistic growth.
    In this paper we develop connections between the two, with the ultimate goal of determining when a population is more appropriately modelled with a PSDBP or a CBP.
    In particular, we state conditions for the existence of equivalent PSDBPs and CBPs, we then consider the subclass of CBPs with deterministic control functions (DCBPs), stating a necessary and sufficient condition for DCBP-PSDBP equivalence.
    Finally, we derive an upper bound on the total variation distance between non-equivalent DCBPs and PSDBPs
	with matching first and second moments and equal initial population size,
	and show that under certain conditions this bound tends to zero as the initial population size becomes large.
\end{abstract}

\section{Introduction}\label{sec:introduction}

Branching processes are popular models for populations where individuals reproduce and die according to probabilistic rules. They have been applied successfully to address real-world problems in various domains, particularly in biology and conservation ecology, see for example \cite{haccou05,jagers75}. Branching processes are flexible models: they can be discrete- or continuous-time, single- or multitype, discrete- or continuous-state, among others. Here we focus on discrete-time single-type branching processes.

The simplest branching process is the \textit{Galton--Watson branching process} (GWBP).
A GWBP $\{X_n\}_{n\in\N_0}$, with initial state $X_0 = x \in \N_1$, evolves at each time-step (or \textit{generation}) $n$ according to the recursive equation
\begin{equation}\label{eqn:gwbp}
	X_n = \sum_{i=1}^{X_{n-1}} \xi_{n,i}, \quad n \geq 1,
\end{equation}
where the $\{\xi_{n,i}\}_{n,i\in\N_1}$ are i.i.d.\
non-negative integer-valued random variables that share a common distribution, known as the \textit{offspring distribution}. While GWBPs exhibit only exponential growth, many biological populations instead grow logistically. This makes makes the GWBP insufficiently flexible for modeling such populations.
The \textit{population-size-dependent branching process} (PSDBP) and the \textit{controlled branching process} (CBP) are  extensions of the GWBP
which provide greater flexibility to model a wider range of biological populations.

PSDBPs generalise GWBPs by allowing the offspring distribution to change as the population size does. A PSDBP $\{Z_n\}_{n\in\N_0}$, started from $Z_0 = z_0 \in \N_1$, evolves as
\begin{equation}\label{eqn:psdbp}
	Z_n = \sum_{i=1}^{Z_{n-1}} \xi_{n,i}(Z_{n-1}), \quad n \geq 1,
\end{equation}
where, for each $x\in\N_1$, the $\{\xi_{n,i}(x)\}_{n,i\in\N_1}$ are i.i.d.\ random variables
sharing a distribution with the random variable $\xi(x)$.
The dependence of the offspring distribution on the population size is what provides a PSDBP the flexibility to model macroscopic, population-wide phenomena that a GWBP cannot.
One particularly well-studied example is the case of resource scarcity,
where a PSDBP can be used to model a population that stabilises around a carrying capacity
\cite{braunsteins22, hamza16, hognas19, klebaner93}.
A variant of this model has seen success in modelling DNA replication via the polymerase chain reaction \cite{jagers03}.

PSDBPs offer flexibility in modelling \textit{demographic stochasticity} --- the randomness inherent in individuals in a population giving birth --- through the offspring distribution $\xi(x)$.
However, they lack a built-in mechanism to model random external events, commonly referred to as \textit{environmental stochasticity}. 
In contrast, in additional to demographic stochasticity, CBPs can effectively model external conditions such as random environments, migration, and other factors by introducing a (possibly random) \textit{control function}
that determines the number of individuals in a generation which produce offspring.
A CBP $\{\tilde{Z}_n\}_{n\in\N_0}$, started from $\tilde{Z}_0 = z_0 \in \N_1$, is characterised by the recurrence relation
\begin{equation}\label{eqn:cbp}
	\tilde{Z}_n = \sum_{i=1}^{\tilde{\phi}_n(\tilde{Z}_{n-1})} \tilde{\xi}_{n,i}, \quad n \geq 1,
\end{equation}
where, just as for a GWBP, the $\{\tilde{\xi}_{n,i}\}_{n,i\in\N_1}$ are i.i.d.\
non-negative integer-valued random variables with the same distribution as $\tilde{\xi}$, 
but, unlike in a GWBP, a control function $\tilde{\phi}_n$ moderates the number of parents in generation $n-1$ who reproduce.
An important sub-class of CBPs, and indeed the original formulation for a `controlled branching process' \cite{sebastyanov74},
is the family of CBPs with a \textit{deterministic} control function, which we will refer to as DCBPs.
The presence of a control function has allowed CBPs (including, of course, DCBPs) to be studied in the context
of populations which on average experience immigration in addition to their intrinsic growth \cite{foster71, gonzalez04, zubkov72}. 

Our objective here is to make progress towards answering the following questions: when is it appropriate to model a population using a PSDBP? When should a CBP be used instead?
Our approach will be to consider the dual of these two questions: by determining when a PSDBP and CBP have an equivalent, or approximately equivalent, representation, we will demonstrate cases in which a population modelled by one process can equally-appropriately be modelled by the other --- cases in which a modeller would be indifferent to the use of either a PSDBP or a CBP.

After introducing some basic results in Section \ref{sec:preliminaries}, we will consider the question of exact PSDBP-CBP equivalence in Section \ref{sec:EquivalentBranchingProcesses},
and we will discuss when non-equivalent PSDBPs and CBPs can be considered approximately equivalent in Section \ref{sec:approxequal}. 

While others have previously considered --- at least informally --- the ability of PSDBPs and CBPs to model similar populations \cite{kuster85}, \cite[p.~996]{gonzalez03},
to date no research has been published comparing these two classes of processes.
This work will begin to fill this gap in the literature, and will not only formalise the notion that PSDBPs and CBPs  \textit{can} be used to model similar populations,
but will demonstrate \textit{when} this is the case.

\section{Preliminaries}\label{sec:preliminaries}

The following notation is used throughout this paper: $\N_0 := \{z\in\Z : z \geq 0 \}$, $\N_1 := \{z\in\Z : z > 0 \}$, and $[n] := \{1, \dots, n\}$, for $n \in \N_1$.
We use `gcd' to mean the greatest common divisor, with the convention that $\text{gcd}\{0, z\} = z$. 

\subsection{Population-Size-Dependent Branching Processes}\label{sec:PSDBPs}

Population-size-dependent branching processes (PSDBPs) are discrete-time stochastic processes characterised by the recursive equation
\begin{equation*}
	Z_0 = z_0 \in \N_1, \quad Z_n = \sum_{i=1}^{Z_{n-1}} \xi_{n,i}(Z_{n-1}), \quad n \geq 1. \tag{\ref{eqn:psdbp}}
\end{equation*}
PSDBPs differ from GWBPs in one way:
rather than there being a universal offspring distribution, PSDBPs have an offspring distribution $\xi(z)$ that varies with the population size  $z\in\N_0$.
By considering \eqref{eqn:psdbp}, we see that a PSDBP is a time-homogeneous Markov chain with an absorbing state at zero, much like a GWBP.

The \textit{offspring mean} and \textit{offspring variance} of a PSDBP are denoted respectively by $m(z) := \E\xi(z)$ and $\sigma^2(z) := \V(\xi(z))$, and are assumed to be finite for all $z\in\N_0$.
The conditional moments of a PSDBP can be expressed in terms of its offspring mean and variance, given by
\begin{equation}\label{eqn:psdbpmean}
    \E(Z_n | Z_{n-1} = z) = z \, m(z)
\end{equation}
and
\begin{equation}\label{eqn:psdbpvariance}
    \V(Z_n | Z_{n-1} = z) = z \, \sigma^2(z)
\end{equation}
for $n\in\N_1$. \\

An important subclass of PSDBPs are those with a \textit{carrying capacity}, below which the process tends to grow, and above which the process tends to shrink.
More specifically, we say that a branching process $\psdbp$ has a carrying capacity $K>0$ if $\E(Z_n| Z_{n-1} = z) > z$ for $z<K$, and $\E(Z_n| Z_{n-1} = z) < z$ for $z>K$.
PSDBPs and CBPs are sufficiently flexible to accommodate carrying capacity models, while GWBPs are not.

Branching processes with a carrying capacity are stochastic counterparts to logistic growth models,
and can be used to model populations growing under the effect of resource scarcity.
Two popular models with a carrying capacity are listed below, both of which have deterministic counterparts:

\begin{enumerate}
	\item[(i)] the Beverton-Holt (BH) model:
	\[
		\E(Z_n | Z_{n-1} = z) = \frac{2Kz}{K + z}, \quad K > 0;
	\]
	\item[(ii)] the Ricker model
	\[
		\E(Z_n | Z_{n-1} = z) = z \cdot r^{1 - z/K}, \quad r > 1,\; K > 0.
	\]
\end{enumerate}
Being defined only through their means, neither of these two models emit a unique specification, and in fact there exist both PSDBPs and CBPs satisfying each. 

\begin{figure}[h]
	\centering
	\includegraphics[width=13cm]{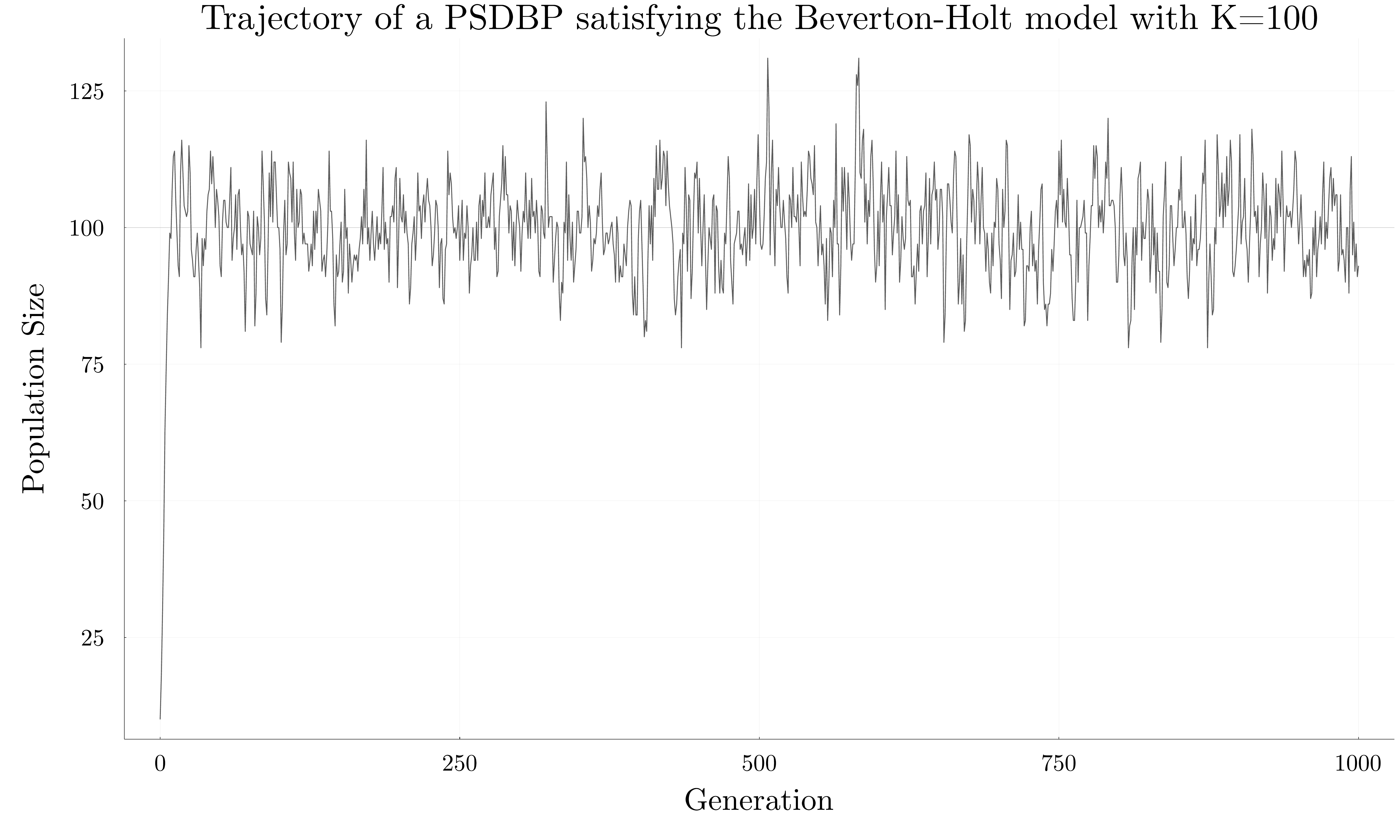}
	\caption{A simulated trajectory, over 1000 generations and started from $Z_0 = 10$, of a PSDBP with $\text{Bin}(2,p(z))$ offspring distribution,
	such that $p(z)$ satisfies the Beverton-Holt model with $K = 100$.}
\label{fig:CarryingCapacity}
\end{figure}
Figure \ref{fig:CarryingCapacity} displays a trajectory from a PSDBP $\psdbp$
with $Z_0 = 10$ and $\xi(z) = \bin\big(2, \frac{100}{100+z}\big)$,
so that $\psdbp$ is a Beverton-Holt model with $K=100$.
We can see that the process rises quickly to reach its carrying capacity,
then lingers around it for the rest of the simulated path.
It has been shown that every PSDBP with a carrying capacity goes extinct a.s.\ \cite[p.~117]{klebaner93},
although the time to extinction for most carrying-capacity models is exponential in the value of $K$ \cite{hognas19}.
Similar behaviour would be observed if we had instead plotted a path from a PSDBP satisfying the Beverton-Holt model with a different offspring distribution,
or from a CBP satisfying the Beverton-Holt model. \\

\subsection{Controlled Branching Processes}\label{sec:CBPs}
Controlled branching processes (CBPs) are also discrete-time stochastic processes that are a modification of a GWBP, this time characterised by the recursive equation
\begin{equation*}
	\zt_0 = z_0 \in \N_1, \quad \zt_n = \sum_{i=1}^{\pt_n(\zt_{n-1})} \xt_{n,i}, \quad n \geq 1. \tag{\ref{eqn:cbp}}
\end{equation*}
For a given generation of size $z\in\N_0$, rather than having each individual reproduce independently to form the next generation,
a function $\pt_n(z)$ of those $z$ individuals reproduce.
This function, known as the \textit{control function}, maps $\N_0$ to the space of random variables supported on $\N_0$,
so that for any $n,z\in\N_0$, $\pt_n(z) \stackrel{d}{=} \pt(z)$ is a random variable.
From \eqref{eqn:cbp} we can see that CBPs are time-homogeneous Markov chains,
but differ from GWBPs and PSDBPs in that they do not necessarily have an absorbing state at zero.
We say that a CBP with $\P(\pt(0) = 0) < 1$ allows \textit{immigration at zero}.

We write the \textit{offspring mean} and \textit{offspring variance} of a CBP $\cbp$ as $\mt := \E\xt$ and $\st := \V(\xi)$ respectively,
from which the conditional moments of the CBP can be expressed as
\begin{equation}\label{eqn:cbpmean}
    \E(\zt_n | \zt_{n-1} = z) = \E\pt(z) \cdot \mt
\end{equation}
and
\begin{equation}\label{eqn:cbpvariance}
    \V(\zt_n | \zt_{n-1} = z) = \E\pt(z) \cdot \st^2 + \V(\pt(z)) \cdot \mt^2
\end{equation}
for $n\in\N_1$. The derivations of these expressions are sketched out in \cite[Proposition~2.2]{gonzalez04}. 

The next remark demonstrates that CBPs are very general processes.
\begin{remark}\label{remark:PSDBPsareCBPs}
    Any time-homogeneous Markov chain supported on $\N_0$ can be represented as a CBP --- by taking $\xt = 1$ a.s. and $\pt$ the distribution of the desired Markov chain.
    Consequently, any PSDBP can be written as a CBP by taking $\xt = 1$ a.s.\ and $\pt(z) = \sum_{i=1}^z \xi_i(z)$.
\end{remark}

In light of \thref{remark:PSDBPsareCBPs}, when modelling populations, we focus on CBPs with specific classes of control functions.
One important class is when $\pt$ has a degenerate distribution. To highlight this setting we denote the control function as $\phi$ rather than $\pt$.
In this case, $\phi$ is a deterministic function mapping the non-negative integers to the non-negative integers.
We call these CBPs with deterministic control functions \textit{deterministically-controlled branching processes} (DCBPs).
When $\cbp$ is a DCBP, the expressions for the offspring mean and variance simplify to
\begin{equation}\label{eqn:dcbpmean}
    \E(\zt_n | \zt_{n-1} = z) = \phi(z) \cdot \mt
\end{equation}
and
\begin{equation}\label{eqn:dcbpvariance}
    \V(\zt_n | \zt_{n-1} = z) = \phi(z) \cdot \st^2.
\end{equation}

Gonz{\'a}lez, del Puerto and Yanev \cite[p.~129]{gonzalez18} also single out three random control functions of importance in the study of CBPs, in terms of a deterministic function $\psi$.
For $z\in\N_0$, these are:
\begin{enumerate}
	\item[(i)] Poisson control function,		\[
			\tilde{\phi}(z) \sim \text{Poi}(\psi(z)), \text{ for } \psi : \N_0 \to \R_{\geq 0},
		\]
	\item[(ii)] Binomial control function,
		\[
			\tilde{\phi}(z) \sim \text{Bin}(\psi(z), q), \text{ for } q \in [0, 1] \text{ and } \psi : \N_0 \to \N_0,
		\]
	\item[(iii)] Negative-binomial control function,
		\[
			\tilde{\phi}(z) \sim \text{NB}(\psi(z), q), \text{ for } q \in [0, 1] \text{ and } \psi : \N_0 \to \R_{\geq 0}.
		\]
\end{enumerate}

\section{Equivalent Branching Processes}\label{sec:EquivalentBranchingProcesses}

We say that two branching processes are \textit{equivalent} if they have the same finite-dimensional distributions (or equivalently here, the same law).
Since the finite-dimensional distributions of a Markov chain can be characterised solely by the chain's initial distribution and transition probabilities \cite[Theorem 5.2.1]{durrett19},
the following result is immediate:

\begin{lemma}\label{BPEquivalence}
	A PSDBP $\psdbp$ and a CBP $\cbp$
	with initial population size $Z_0 = \tilde{Z}_0 = z_0 \in \N_1$ are equivalent if and only if
	\begin{equation*}
		(Z_n | Z_{n-1} = z) \stackrel{d}{=} (\tilde{Z}_n | \tilde{Z}_{n-1} = z)
	\end{equation*}
	for all $n\in\N_1$ and all attainable $z\in\N_0$.
\end{lemma}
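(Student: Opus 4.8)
The plan is to reduce the lemma to the standard fact cited just before it: the law of a time-homogeneous Markov chain is pinned down by its initial distribution together with its one-step transition kernel. Both $\psdbp$ and $\cbp$ are time-homogeneous Markov chains on $\N_0$ (by \eqref{eqn:psdbp} and \eqref{eqn:cbp}) and share the initial law $\delta_{z_0}$ by hypothesis, so the only thing left to check is that equivalence is the same as agreement of the two transition kernels on the \emph{attainable} states, where, by time-homogeneity, the notion of "attainable" and the transition law out of an attainable state do not depend on the generation index $n$.

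For the ($\Leftarrow$) direction I would argue via Chapman--Kolmogorov: for a fixed path $z_0, \dots, z_n$ the Markov property gives
\[
\P(Z_1 = z_1, \dots, Z_n = z_n) = \prod_{k=1}^{n} \P(Z_k = z_k \mid Z_{k-1} = z_{k-1}),
\]
and the same identity holds for $\cbp$. A short induction on $n$ shows that this probability vanishes for one chain if and only if it vanishes for the other (the first index $k$ at which a factor is zero is the same for both, since all earlier states are then attainable for both chains and the hypothesis matches the $k$-th factors there), and when the probability is positive every conditioned state is attainable, so the factors agree term by term. Hence all finite-dimensional distributions coincide, i.e.\ the processes are equivalent.

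For ($\Rightarrow$) I would run this backwards: if $z$ is attainable, pick $m$ with $\P(Z_m = z) = \P(\zt_m = z) > 0$ (these marginals agree by equivalence), observe that the two-dimensional marginal $\P(Z_{m+1} = y,\, Z_m = z)$ also agrees for every $y$, and divide by the common positive value to obtain $(Z_{m+1} \mid Z_m = z) \stackrel{d}{=} (\zt_{m+1} \mid \zt_m = z)$; time-homogeneity of both chains then upgrades this to the claimed identity for all $n \in \N_1$.

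I do not anticipate a genuine obstacle — as the authors note, the result is immediate from the Markov property. The only points needing a little care are the bookkeeping around attainable versus non-attainable states (conditional laws on the latter are either undefined or irrelevant and so do not enter the "if and only if"), and recording that, because a CBP may allow immigration at $0$, the state $0$ can be attainable and non-absorbing for $\cbp$; this widens the collection of $z$ that must be checked but leaves the argument unchanged.
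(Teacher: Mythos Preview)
Your proposal is correct and aligns with the paper's approach: the authors do not give a written proof at all, instead declaring the lemma ``immediate'' from the standard fact (cited as \cite[Theorem 5.2.1]{durrett19}) that the law of a Markov chain is determined by its initial distribution and transition probabilities. Your write-up is simply a careful unpacking of that citation, with the attainable-state bookkeeping made explicit.
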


For a given initial state $z_0$, we define the set of \textit{attainable} population sizes of a process $\psdbp$ precisely as
\begin{equation}\label{eqn:attainable}
    \mathcal{A}(z_0) := \big\{ x\in\N_0 : \exists\, n \in \N_0 \text{ s.t. } \P(Z_n = x | Z_0 = z_0) > 0 \big\},
\end{equation}
so that if $z \in \mathcal{A}(z_0)$, $z$ has a non-zero probability of being reached. 

\thref{BPEquivalence} justifies our claim in \thref{remark:PSDBPsareCBPs} that all PSDBPs are CBPs.
The converse, however, is not true.
Since PSDBPs necessarily have an absorbing state at a population size of zero, while CBPs do not, we can easily construct a CBP that has no equivalent PSDBP.
One such CBP is depicted in Figure \ref{fig:RandomExtinction}.

\begin{remark}\label{noimmigration}
    No CBP which allows immigration at zero is able to be expressed as a PSDBP.
\end{remark}

\begin{figure}[!htb]
	\centering
	\includegraphics[width=13cm]{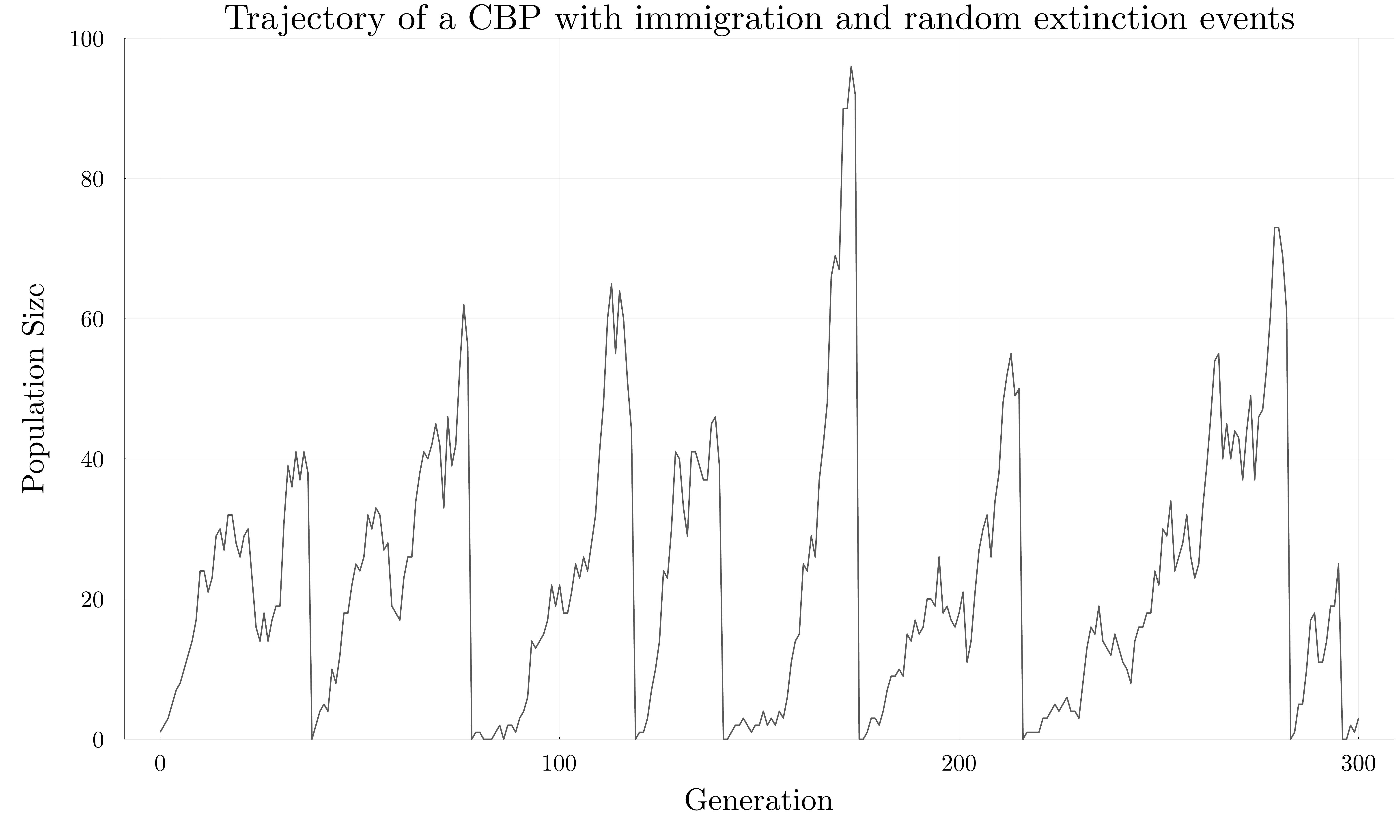}
	\caption{A simulated trajectory, over 300 generations and started from $\tilde{Z}_0 = 1$, of a CBP with $\tilde{\phi}(z) \sim (z+1) \cdot \text{Ber}\big( e^{-z/1000} \big)$
    and $\tilde{\xi} \sim \text{Bin}(5,1/5)$, such that the process experiences immigration at zero.}
 \label{fig:RandomExtinction}
\end{figure}

\subsection{Equivalent PSDBPs and CBPs}\label{sec:equivalentprocesses}

Given a CBP $\cbp$, we consider the problem of determining the existence of an equivalent PSDBP.
Informally speaking and in light of \thref{BPEquivalence}, this is the same as asking whether we can, for all attainable $z$, `divide' the random variable $\sum_{i=1}^{\pt(z)}\xt_i$ into $z$ i.i.d.\ components.

The divisibility of random variables is a well-studied concept \cite{steutel03}.
For $n\in\N_1\setminus\{ 1\}$, a random variable $X$  on $\N_0$ is said to be \textit{n-divisible} if there exist $n$ i.i.d.\ random variables $\{X_i^{(n)}\}_{i\in[n]}$ such that $X \stackrel{d}{=} \sum_{i=1}^n X_i^{(n)}$.
It is said to be \textit{divisible} if there is at least one $n$ such that $X$ is $n$-divisible,
and to be \textit{infinitely divisible} if $X$ is $n$-divisible for all $n$.

It is well-known that random variables with Poisson, geometric, or negative binomial distributions are infinitely divisible \cite[p.~28]{steutel03},
while binomial-distributed random variables are divisible (as the sum of i.i.d.\ Bernoulli random variables), but not infinitely divisible, as no non-degenerate bounded random variable is infinitely divisible~\cite[p.~4]{steutel03}.
The distribution of $\sum_{i=1}^{\pt(z)}\xt_i$, however, often does not have such a convenient form.
Instead we focus on the control function $\pt$ of the CBP and ask whether we can divide $\pt(z)$ into $z$ i.i.d.\ components for all attainable $z\geq 1$.
This question is formulated in terms of the family of random variables $\pt(z)$ indexed by $z$. To capture this concept, we introduce a new type of divisibility.

\begin{definition}\label{def:zdivisiblecontrolfunctions}
	Let $\cbp$ be a CBP with $\zt_0 = z_0$ and control function $\pt$.
	We say that the control function \textit{\textbf{$\boldsymbol{\pt}$ is $\boldsymbol{\zt}$-divisible}}
	with respect to $\cbp$ if $\pt(0) = 0$
	and $\pt(z)$ is $z$-divisible for all attainable $z\in\N_1$.
\end{definition}

In words, a control function is $\zt$-divisible if it does not allow immigration at zero and is always divisible by the current population size.
In particular, if $\pt(0) = 0$ and $\pt(z)$ is infinitely divisible for all attainable $z$, then $\pt$ is $\zt$-divisible.
For example, if for all $z\in\N_0$ we have $\pt(z) \sim \poi(\psi(z))$, $\psi : \N_0 \to \R_{\geq 0}$, then $\pt$ is $\tilde{Z}$-divisible as long as $\psi(0) = 0$.

The next proposition uses this definition to state a sufficient condition for CBP-PSDBP equivalence.

\begin{proposition}\label{divisiblecontrolfunctions}
	Let $\cbp$ be a CBP with a $\tilde{Z}$-divisible control function, $\pt$.
	Then $\cbp$ can be expressed as a PSDBP.
\end{proposition}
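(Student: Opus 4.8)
The plan is to construct an explicit PSDBP $\{Z_n\}$ that is equivalent to the given CBP $\cbp$, and then invoke \thref{BPEquivalence} to conclude. The idea is that $\zt$-divisibility of $\pt$ gives us, for each attainable $z \geq 1$, a decomposition of $\pt(z)$ into $z$ i.i.d.\ pieces; we then compose each piece with an independent copy of the offspring law of $\xt$ to manufacture a population-size-dependent offspring random variable $\xi(z)$.

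**First I would** fix the initial state $Z_0 = z_0 = \zt_0$. For each attainable $z \in \N_1$, $\zt$-divisibility provides i.i.d.\ random variables $\pt_1^{(z)}, \dots, \pt_z^{(z)}$ on $\N_0$ with $\pt(z) \stackrel{d}{=} \sum_{i=1}^z \pt_i^{(z)}$. Let $\{\xt_{j}\}_{j \in \N_1}$ be i.i.d.\ copies of $\xt$, independent of everything else, and define
\begin{equation*}
	\xi(z) \;\stackrel{d}{=}\; \sum_{j=1}^{\pt_1^{(z)}} \xt_j,
\end{equation*}
a random sum of $\pt_1^{(z)}$-many independent offspring. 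For $z$ not attainable (or $z = 0$) the choice of $\xi(z)$ is immaterial; set $\xi(0) \equiv 0$ so that the PSDBP has its absorbing state at zero, which is consistent since $\pt(0) = 0$ forces $\zt$ to be absorbed at zero as well along attainable trajectories. Now build the PSDBP $\{Z_n\}$ via \eqref{eqn:psdbp} using i.i.d.\ copies $\xi_{n,i}(z)$ of $\xi(z)$.

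**The key computation** is then to verify the one-step law matches. For attainable $z \geq 1$, conditionally on $Z_{n-1} = z$,
\begin{equation*}
	Z_n \;=\; \sum_{i=1}^{z} \xi_{n,i}(z) \;\stackrel{d}{=}\; \sum_{i=1}^{z} \sum_{j=1}^{\pt_i^{(z)}} \xt_{i,j} \;\stackrel{d}{=}\; \sum_{k=1}^{\sum_{i=1}^z \pt_i^{(z)}} \xt_k \;\stackrel{d}{=}\; \sum_{k=1}^{\pt(z)} \xt_k \;\stackrel{d}{=}\; (\zt_n \mid \zt_{n-1} = z),
\end{equation*}
where the first equality in distribution uses that the $\xi_{n,i}(z)$ are i.i.d., the second reindexes a double sum of i.i.d.\ $\xt$'s (legitimate because the $\pt_i^{(z)}$ are independent of the $\xt$'s and i.i.d.\ among themselves), the third uses the divisibility identity $\sum_{i=1}^z \pt_i^{(z)} \stackrel{d}{=} \pt(z)$, and the last is just the definition \eqref{eqn:cbp} of the CBP. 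Since this holds for all $n \in \N_1$ and all attainable $z$, and the two processes share the same initial state, \thref{BPEquivalence} gives equivalence, so the CBP is expressible as a PSDBP.

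**The main subtlety** — rather than a deep obstacle — is making the reindexing of the double random sum rigorous: one should note that a sum of i.i.d.\ copies of $\xt$ indexed by an independent integer-valued count depends only on the distribution of that count, so replacing the (dependent-looking but distributionally equivalent) count $\sum_i \pt_i^{(z)}$ by $\pt(z)$ is valid. One should also be slightly careful that "attainable'' is defined relative to a fixed $z_0$, so the construction of $\xi(z)$ only needs to succeed on $\mathcal{A}(z_0)$, and that the absorbing behaviour at $0$ is preserved because $\pt(0) = 0$ means $z = 0$ is absorbing for the CBP along attainable paths as well — ensuring the constructed PSDBP and the CBP agree there too.
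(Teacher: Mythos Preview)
Your proposal is correct and follows essentially the same approach as the paper: both decompose $\pt(z)$ into $z$ i.i.d.\ pieces (the paper writes $\zeta_i(z)$ where you write $\pt_i^{(z)}$), define $\xi_i(z)$ as the random sum $\sum_{j=1}^{\zeta_i(z)} \xt_j$, and verify the one-step distributional identity before invoking \thref{BPEquivalence}. Your write-up is somewhat more careful about the absorbing state at $0$ and the reindexing step, but the argument is the same.
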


Given \thref{BPEquivalence}, and since $\zt$-divisibility guarantees that there exist i.i.d.\ $\{\zeta_i(z)\}_{i\in [z]}$
such that $\pt(z) \stackrel{d}{=} \sum_{i=1}^z \zeta_i(z)$ for all attainable $z$ (i.e.\ for all $z\in\mathcal{A}(z_0)$ if $\tilde{Z}_0 = z_0 \in\N_1$),
\thref{divisiblecontrolfunctions} follows from the rearrangement
\[
    \sum_{i=1}^{\pt(z)} \xt_i \stackrel{d}{=} \sum_{i=1}^z \sum_{j=1}^{\zeta_i(z)} \xt_j
    \stackrel{d}{=} \sum_{i=1}^z \xi_i(z),
\]
for $\xi_i(z) := \sum_{j=1}^{\zeta_i(z)} \xt_j$.

\begin{corollary}\label{PoissonNBBPEquivalence}

    If a CBP $\cbp$ has, for all attainable $z\in\N_0$,
    \[
        \tilde{\phi}(z) \sim \text{Poi}(\psi(z)) \quad\text{or}\quad
        \tilde{\phi}(z) \sim \text{NB}(\psi(z), q),
    \]
    where $\psi : \N_0 \to \R_{\geq 0}$ with $\psi(0) = 0$, and $q\in [0, 1]$,
	then $\cbp$ be expressed equivalently as a PDSBP.
\end{corollary}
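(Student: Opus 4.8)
The plan is to derive Corollary~\ref{PoissonNBBPEquivalence} as an immediate consequence of \thref{divisiblecontrolfunctions}: it suffices to verify that a control function $\pt$ with $\pt(z) \sim \poi(\psi(z))$ for all attainable $z$, or $\pt(z) \sim \nb(\psi(z), q)$ for all attainable $z$, is $\zt$-divisible in the sense of \thref{def:zdivisiblecontrolfunctions}.

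First I would check the condition $\pt(0) = 0$. Since $\psi(0) = 0$, we have $\pt(0) \sim \poi(0)$ (resp.\ $\nb(0, q)$), and both of these are the point mass at $0$, so $\pt(0) = 0$ a.s. Second, I would check $z$-divisibility of $\pt(z)$ for each attainable $z \in \N_1$. Here I invoke the standard facts, already cited in the excerpt from \cite[p.~28]{steutel03}, that Poisson and negative binomial distributions are infinitely divisible: explicitly, $\poi(\lambda) \stackrel{d}{=} \sum_{i=1}^{z} Y_i$ with $Y_i$ i.i.d.\ $\poi(\lambda/z)$, and $\nb(r, q) \stackrel{d}{=} \sum_{i=1}^{z} Y_i$ with $Y_i$ i.i.d.\ $\nb(r/z, q)$ (the latter using that the negative binomial parameter $r$ need not be an integer). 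In particular $\pt(z)$ is $z$-divisible for every $z \in \N_1$, attainable or not. Hence $\pt$ is $\zt$-divisible with respect to $\cbp$, and \thref{divisiblecontrolfunctions} applies to give the conclusion.

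Since every step is a direct citation or a one-line computation, there is no real obstacle here; the only point requiring a moment's care is the non-integer shape parameter in the negative binomial case, which is why I would state the $\nb(r/z, q)$ decomposition explicitly rather than leaving it implicit. This also explains why the statement needs $\psi$ only to map into $\R_{\geq 0}$ rather than $\N_0$: infinite divisibility handles arbitrary real parameters, in contrast to the binomial control function, which is merely finitely divisible and thus not covered by this corollary.
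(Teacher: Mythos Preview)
Your proposal is correct and follows essentially the same approach as the paper, which simply notes that the result follows directly from \thref{divisiblecontrolfunctions} because the Poisson and negative-binomial distributions are infinitely divisible. Your version adds helpful detail (the explicit $\poi(\lambda/z)$ and $\nb(r/z, q)$ decompositions and the check that $\psi(0)=0$ forces $\pt(0)=0$), but the underlying argument is the same.
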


This result follows directly from \thref{divisiblecontrolfunctions},
since both the Poisson and negative-binomial distributions are infinitely divisible.
In fact, since \thref{noimmigration} precludes any equivalence in the case of $\psi(0) \neq 0$,
\thref{PoissonNBBPEquivalence} completely characterises the conditions for PSDBP-CBP equivalence for two of the three important families of random control functions listed in Section \ref{sec:CBPs}.

The third important class of random control functions, binomial control functions,
are not infinitely divisible.
\thref{divisiblecontrolfunctions} still allows us to state a sufficient condition in this case:

\begin{corollary}\label{BinomialDivisibleEquivalence}
    Let $\cbp$ be a CBP with control function
    \[
        \pt(z) \sim \text{Bin}(\psi(z), q)
    \]
    for $z\in\N_0$, where $\psi : \N_0 \to \N_0$ and $q \in [0, 1]$.
    Then there exists an equivalent PSDBP if $\psi(0) = 0$ and $\psi(z)/z \in \N_0$ for all attainable $z\in\N_1$.
\end{corollary}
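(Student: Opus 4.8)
The plan is to reduce \thref{BinomialDivisibleEquivalence} to \thref{divisiblecontrolfunctions} by showing that, under the stated hypotheses, the control function $\pt$ is $\zt$-divisible with respect to $\cbp$. Since $\zt$-divisibility requires two things --- that $\pt(0) = 0$ and that $\pt(z)$ is $z$-divisible for all attainable $z \in \N_1$ --- the first is handed to us directly by the assumption $\psi(0) = 0$, as $\text{Bin}(0, q)$ is the point mass at $0$. So the work is entirely in verifying the divisibility condition for each attainable $z \geq 1$.

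For a fixed attainable $z \geq 1$, write $k := \psi(z)/z$, which by hypothesis is a non-negative integer. Then $\pt(z) \sim \text{Bin}(\psi(z), q) = \text{Bin}(kz, q)$. The key elementary fact is that a $\text{Bin}(kz, q)$ random variable is the sum of $kz$ i.i.d.\ $\text{Ber}(q)$ random variables, and we may group these into $z$ blocks of $k$ each; each block is an i.i.d.\ copy of a $\text{Bin}(k, q)$ random variable. Hence, setting $\zeta_i(z) \sim \text{Bin}(k, q)$ i.i.d.\ for $i \in [z]$, we have $\pt(z) \stackrel{d}{=} \sum_{i=1}^z \zeta_i(z)$, which is exactly $z$-divisibility of $\pt(z)$. (The edge case $k = 0$, i.e.\ $\psi(z) = 0$, is fine: then $\pt(z) = 0$ and each $\zeta_i(z)$ is the point mass at $0$.)

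Having established that $\pt$ is $\zt$-divisible, \thref{divisiblecontrolfunctions} applies verbatim and yields a PSDBP equivalent to $\cbp$; concretely, following the rearrangement displayed after that proposition, the equivalent PSDBP has offspring distribution $\xi(z) \stackrel{d}{=} \sum_{j=1}^{\zeta_1(z)} \xt_j$ where $\zeta_1(z) \sim \text{Bin}(\psi(z)/z, q)$, so that $\xi(z)$ is a $\xt$-distributed compound binomial. This completes the argument.

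I do not expect a genuine obstacle here: the statement is essentially a clean specialisation of \thref{divisiblecontrolfunctions}, and the only content is the observation that $\text{Bin}(kz,q)$ splits into $z$ i.i.d.\ $\text{Bin}(k,q)$ pieces precisely when $z \mid \psi(z)$. The one point that warrants a word of care is that the condition is required only for \emph{attainable} $z$, mirroring \thref{def:zdivisiblecontrolfunctions}; one should note that this is genuinely weaker than demanding $z \mid \psi(z)$ for all $z \in \N_1$, and is the right hypothesis because the equivalence in \thref{BPEquivalence} only needs to hold on $\mathcal{A}(z_0)$.
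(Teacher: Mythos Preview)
Your proposal is correct and follows exactly the approach the paper intends: the corollary is stated as an immediate consequence of \thref{divisiblecontrolfunctions}, and your verification that $\text{Bin}(kz,q)$ decomposes into $z$ i.i.d.\ $\text{Bin}(k,q)$ summands (with the $\psi(0)=0$ hypothesis handling the base case) is precisely the $\zt$-divisibility check that makes that proposition apply.
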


Because \thref{BinomialDivisibleEquivalence} does not give an `if and only if' condition, it does not provide a complete characterisation of  equivalence for a binomial control function.
To say more, we need to look at the CBP as a whole, rather than just at its control function.

Consider the class of CBPs with binomial control functions that are \textit{not}
$\zt$-divisible.
We might expect that such a CBP could still be expressed as an equivalent PSDBP if it has an infinitely divisible offspring distribution. 
However, this turns out to not necessarily be the case.

\begin{proposition}\label{PoissonOffspringNoEquivalence}
	Consider the CBP $\cbp$ with $\zt_0 = z_0 \in \N_1$,
	\[
		\pt(z) \sim \bin\big(\psi(z), p(z)\big) \quad\text{and}\quad \xt \sim \poi(\lambda)
	\]
	for $z\in\N_0$, where $\psi: \N_0 \to \N_0$,  $p: \N_0 \to (0,1)$, and $\lambda > 0$.
	If $\pt(z)$ is not $\zt$-divisible, and $\psi(z) \geq 1$ for all $z \geq 1$,
	then $\cbp$ cannot be expressed equivalently as a PSDBP.
\end{proposition}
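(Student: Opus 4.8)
The plan is to argue by contradiction: suppose $\cbp$ \emph{is} equivalent to some PSDBP $\psdbp$. By \thref{BPEquivalence}, for every attainable $z \geq 1$ the law of $\zt_n$ given $\zt_{n-1} = z$, namely the distribution of $\sum_{i=1}^{\bin(\psi(z),p(z))} \xt_i$ with $\xt \sim \poi(\lambda)$, must coincide with the law of $\sum_{i=1}^z \xi_i(z)$ for some i.i.d.\ family $\{\xi_i(z)\}$. In other words, for each such $z$ the random variable $S_z := \sum_{i=1}^{\bin(\psi(z),p(z))} \xt_i$ must be $z$-divisible. So the whole task reduces to showing that $S_z$ is \emph{not} $z$-divisible for at least one attainable $z \geq 1$ --- and the natural candidate is a $z$ at which $\pt(z)$ itself fails to be $z$-divisible, which exists by hypothesis.

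The key computation is to identify the distribution of $S_z$. A Binomial-indexed sum of i.i.d.\ Poissons is a classical mixed/compound distribution: conditioning on $\pt(z) = k$, the sum is $\poi(k\lambda)$, so $S_z$ is a mixture of $\poi(k\lambda)$ over $k \sim \bin(\psi(z), p(z))$. Equivalently, working with probability generating functions, if $f_{\xt}(s) = e^{\lambda(s-1)}$ and $f_{\pt(z)}(s) = (1 - p(z) + p(z)s)^{\psi(z)}$, then the p.g.f.\ of $S_z$ is
\[
    f_{S_z}(s) = f_{\pt(z)}\big(f_{\xt}(s)\big) = \big(1 - p(z) + p(z)e^{\lambda(s-1)}\big)^{\psi(z)}.
\]
Writing $a := 1 - p(z) \in (0,1)$, $b := p(z) \in (0,1)$, $N := \psi(z) \geq 1$, the question is whether $g(s) := \big(a + b\,e^{\lambda(s-1)}\big)^{N}$ is the $z$-th power of a probability generating function, i.e.\ whether $g(s)^{1/z}$ is itself a p.g.f. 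The first step is therefore to reduce to the case $\gcd(N,z)$: writing $d = \gcd(N,z)$, $z$-divisibility of a random variable whose p.g.f.\ is an $N$-th power forces $g(s)^{1/d}$ to be analysed, and one is left asking whether $\big(a + b\,e^{\lambda(s-1)}\big)^{N/d}$ is a $(z/d)$-th power of a p.g.f.\ with $\gcd(N/d, z/d)=1$. The hypothesis that $\pt(z)$ is not $z$-divisible should be shown to imply (for the binomial case) that $z/d > 1$, i.e.\ $z \nmid N$; this is where the explicit form of the binomial matters.

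The main obstacle --- and the technical heart of the argument --- is proving the non-divisibility claim: that $\big(a + b\,e^{\lambda(s-1)}\big)^{M}$ is not an $r$-th power of a p.g.f.\ when $r > 1$ and $\gcd(M,r) = 1$. The cleanest route is an analytic one via the zeros of the generating function extended to the complex plane. The function $h(s) = a + b\,e^{\lambda(s-1)}$ is entire and has infinitely many complex zeros (solving $e^{\lambda(s-1)} = -a/b$ gives $s = 1 + \tfrac{1}{\lambda}\log(a/b) + \tfrac{i\pi(2k+1)}{\lambda}$, $k \in \Z$), each simple; hence $g = h^M$ has zeros all of multiplicity exactly $M$. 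If $g = G^r$ for some p.g.f.\ $G$ (necessarily entire of finite order, being the p.g.f.\ of a compound-Poisson-type law), then $G$ is entire and every zero of $g$ has multiplicity divisible by $r$ --- forcing $r \mid M$, contradicting $\gcd(M,r) = 1$ and $r > 1$. One must check two supporting facts: first, that any $z$-divisor of $S_z$ has a genuinely analytic (entire) p.g.f., which follows because $S_z$ has exponential moments / light tails inherited from the Poisson and the bounded control, so its p.g.f.\ is entire and a $z$-th root of an entire zero-free-at-origin function of finite order is again entire (Hadamard factorisation); and second, that the zeros of $h$ really are simple, which is immediate since $h'(s) = b\lambda e^{\lambda(s-1)}$ never vanishes. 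Assembling these: non-$\zt$-divisibility of $\pt(z)$ at some attainable $z$ gives $z \nmid \psi(z)$ at that $z$, hence $r := z/\gcd(\psi(z),z) > 1$ with $\gcd(\psi(z)/\gcd,\,r)=1$, and the zero-multiplicity argument shows $S_z$ is not $z$-divisible there, contradicting equivalence via \thref{BPEquivalence}. \qed
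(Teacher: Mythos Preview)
Your overall plan coincides with the paper's: both reduce to showing that, at an attainable $z^*$ where $z^* \nmid \psi(z^*)$, the random variable $S_{z^*}=\sum_{i=1}^{\pt(z^*)}\xt_i$ is not $z^*$-divisible, and both attack this through the probability generating function $g(s)=\bigl(a+b\,e^{\lambda(s-1)}\bigr)^{\psi(z^*)}$. The paper's execution differs from yours: it invokes Pringsheim's theorem directly, observing that the branch points of $g^{1/z^*}$ (the zeros of $h(s)=a+b\,e^{\lambda(s-1)}$) all lie off the real axis, whereas a power series with non-negative coefficients must have its nearest singularity on the positive real axis.

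Your alternative via zero multiplicities is natural, but there is a genuine gap at the step where you claim the hypothetical divisor's p.g.f.\ $G$ is entire. The stated justification --- ``a $z$-th root of an entire zero-free-at-origin function of finite order is again entire (Hadamard factorisation)'' --- is false in general: take $g(s)=(s-1)^2$, entire of order $0$ with $g(0)\neq 0$, yet $g^{1/3}$ has a branch point at $s=1$. Hadamard factorisation describes the zero set of an entire function of finite order; it does not manufacture entire roots. The fix is exactly what you hinted at just before the Hadamard remark: $S_z$ is a finite mixture of Poissons, so $\E e^{tS_z}<\infty$ for all real $t$; then $\E e^{tX_1}=(\E e^{tS_z})^{1/z}<\infty$ for every $t$, so $G(s)=\sum_k \P(X_1=k)\,s^k$ converges for all $s>0$ and (having non-negative coefficients) therefore has infinite radius of convergence. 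With $G$ entire, the zero-multiplicity conclusion ($G^{z^*}=h^{\psi(z^*)}$, zeros of $h$ simple, forcing $z^*\mid\psi(z^*)$) goes through cleanly --- and the $\gcd$ reduction is then unnecessary. Once patched, your argument and the paper's are two sides of the same coin: the branch points of $g^{1/z^*}$ exist precisely because the zeros of $g$ have order not divisible by $z^*$.
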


We can compare this result with \cite[Proposition VI.6.2]{steutel03}, in which Steutel and van Harn show that a mixture of different Poisson distributions is not infinitely divisible if the mixing distribution has a non-degenerate, bounded support.

On the other hand, we can also produce an example of a CBP without a $\zt$-divisible control function that \textit{can} be equivalently expressed as a PSDBP.

\begin{proposition}\label{GeometricOffspringEquivalence}
	Consider the CBP $\cbp$ with $\zt_0 = z_0 \in \N_1$, 
	\[
		\pt(z) \sim \bin(\psi(z), p(z)) \quad\text{and}\quad \xt \sim \geom(q)
	\]
	for $z\in\N_0$, where $\psi: \N_0 \to \N_0$, $p: \N_0 \to (0,1)$, and $q \in (0,1)$.
	Then as long as $\psi(0) = 0$, $\cbp$ can be expressed equivalently as a PSDBP.
\end{proposition}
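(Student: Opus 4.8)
The plan is to show that the CBP's one-step transition law, given population size $z$, can be rewritten as a sum of $z$ i.i.d.\ contributions, so that \thref{BPEquivalence} applies. The key observation is that for geometric offspring the thinning-type composition $\sum_{i=1}^{\bin(\psi(z),p(z))} \xt_i$ can be re-expressed through the probability generating function (pgf). Writing $f$ for the pgf of $\xt \sim \geom(q)$, the conditional pgf of $\zt_n$ given $\zt_{n-1}=z$ is $\E s^{\zt_n} = \big(1 - p(z) + p(z) f(s)\big)^{\psi(z)}$. First I would compute $f(s)$ explicitly (for $\geom(q)$ on $\N_0$, $f(s) = q/(1 - (1-q)s)$, up to the usual parametrisation convention), substitute, and simplify $1 - p(z) + p(z)f(s)$ into a ratio of linear polynomials in $s$.

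The main step is to recognise that $1 - p(z) + p(z)f(s)$ is itself (a constant multiple of) the pgf of a geometric-type or a two-point-mixture-of-geometric random variable --- more precisely, a random variable of the form ``$0$ with some probability, otherwise $\geom$''. Concretely, I expect $1 - p(z) + p(z)f(s) = \frac{a(z) + b(z) s}{1 - (1-q)s}$ for suitable constants, which is the pgf of a $\geom$-like law; raising it to the integer power $\psi(z)$ gives a product of $\psi(z)$ such pgfs, i.e.\ the law of a sum of $\psi(z)$ i.i.d.\ copies. The crucial point is that this representation does \emph{not} require $\psi(z)/z$ to be an integer: because the building block is itself (essentially) infinitely divisible --- being a geometric mixture --- we can split the single factor $\big(\text{pgf}\big)^{\psi(z)}$ into $z$ equal pieces, each of the form $\big(\text{pgf}\big)^{\psi(z)/z}$, which remains a valid pgf of a (negative-binomial-type) random variable even for non-integer exponent $\psi(z)/z$. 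This yields i.i.d.\ $\{\xi_i(z)\}_{i\in[z]}$ with $\sum_{i=1}^z \xi_i(z) \stackrel{d}{=} \zt_n \mid \zt_{n-1}=z$.

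Then I would handle the boundary: since $\psi(0)=0$, we have $\pt(0)=0$ a.s., so $\zt_n \mid \zt_{n-1}=0$ is $0$ a.s., matching the absorbing state of a PSDBP, and \thref{noimmigration} is not an obstruction. Defining the PSDBP offspring law by $\xi(z) := \xi_1(z)$ for attainable $z\geq 1$ (and $\xi(0)=0$), \thref{BPEquivalence} gives equivalence, since both processes start from $z_0$ and have matching conditional laws at every attainable state.

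The hard part will be the algebraic identification in the second step --- verifying that $1 - p(z) + p(z)f(s)$, and hence its fractional powers, really is a legitimate pgf (nonnegative coefficients, value $1$ at $s=1$). I would confirm this by checking it equals the pgf of a mixed/compound law: writing it as a constant times $\big(\text{geometric pgf}\big)$ plus a point mass at $0$, then noting that such ``zero-modified geometric'' laws, and their negative-binomial-type fractional convolution powers, are infinitely divisible (they sit inside the compound-Poisson / $\mathrm{NB}$ family), so the fractional exponent causes no trouble. Once that is in place the rest is routine bookkeeping with pgfs and an appeal to \thref{BPEquivalence}.
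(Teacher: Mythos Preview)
Your approach is essentially the paper's: both recognise that, conditional on $\zt_{n-1}=z$, the one-step law factors as a $\psi(z)$-fold convolution of a zero-inflated geometric (the paper reaches this by writing $\sum_{i=1}^{\pt(z)}\xt_i \stackrel{d}{=}\sum_{i=1}^{\psi(z)} B_i(z)\xt_i$ with $B_i(z)\sim\ber(p(z))$, you reach it by computing the pgf $1-p(z)+p(z)f(s)$), and both then invoke infinite divisibility of this building block to split into $z$ i.i.d.\ pieces.

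The one place your sketch is soft is the justification that the zero-inflated geometric is infinitely divisible. Saying it ``sits inside the compound-Poisson / NB family'' is not a proof --- a $\zig(1-p,q)$ is not a negative binomial, and its compound-Poisson representation is exactly what is in question. The paper handles this via the Warde--Katti log-convexity criterion: since $\P(X=0)>0$, $\P(X=1)>0$, and the ratios $\P(X=k+1)/\P(X=k)$ are non-decreasing (they equal $1-q$ for $k\geq 1$ and are strictly smaller at $k=0$), $X$ is infinitely divisible. You should plug this (or an equivalent argument, e.g.\ exhibiting the canonical compound-Poisson form directly) into your pgf step; once that is in place, the rest of your outline goes through and matches the paper's proof.
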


It is worthwhile to note that although we have proven that the CBP in \thref{GeometricOffspringEquivalence}
\textit{can} be expressed equivalently as a PSDBP,
the form of this equivalent PSDBP may not be very neat: its offspring distribution will be formed from the components
of a divided zero-inflated geometric distribution, which do not have a known closed form. 

Having demonstrated cases where PSDBP-CBP equivalence is impossible, and cases where it is possible,
we leave open the question of classifying the class of CBPs without a $\zt$-divisible control function that are amenable to equivalence.

\subsection{Equivalent PSDBPs and DCBPs}\label{sec:deterministiccontrolfunctions}

In Section \ref{sec:CBPs} we introduced the family of DCBPs as the set of CBPs with deterministic control functions.
The same results developed in Section \ref{sec:equivalentprocesses} apply when the CBP in question is a DCBP;
we can simply take the control function to have a degenerate distribution when determining $\tilde{Z}$-divisibility.
For example, the DCBP $\cbp$ with $\phi(z) = 2z$ and $\xt \sim \ber(1/2)$
has a $\zt$-divisible control function,
so by \thref{divisiblecontrolfunctions} it can be expressed as a PSDBP.
Indeed this is the case, since $(\zt_n | \zt_{n-1} = z) \stackrel{d}{=} \sum_{i=1}^z \xi_{n,i}$, where $\xi \sim \bin( 2, 1/2 )$. 

Unlike for CBPs, the set of DCBPs is not a superset of the PSDBPs.
Similarly, the set of PSDBPs is not a superset of the DCBPs.
Indeed, for example, the PSDBP $\psdbp$ with $\xi(z) \sim \ber\big( z/(z+1) \big)$, $z\in\N_0$, cannot be expressed as a DCBP,
while the DCBP $\cbp$ with $\phi(z) = \max\{z-1, 0\}$, $z\in\N_0$, and $\xt \sim \ber( 1/2)$ cannot be expressed as a PSDBP. 

The simplified structure of DCBPs allows us to strengthen \thref{divisiblecontrolfunctions} into a necessary and sufficient condition, based on the following definition:

\begin{definition}\label{OffspringDistributionZDivisibility}\label{offspringdistributionzdivisibility}
	Let $\cbp$ be a DCBP with control function $\phi$,
	offspring distribution $\xt$, and initial population size $z_0$.
	We say that \textit{\textbf{the process $\boldsymbol{\cbp}$ is $\boldsymbol{\zt}$-divisible}}
	if $\phi(0) = 0$ and, for all values
	\[
		y \in \mathcal{Y}_{\tilde{Z}} := \left\{
			\frac{z}{\text{gcd}\{\phi(z), z\}} \,:\, z \in \mathcal{A}(z_0) \setminus \{0\}
		\right\},
	\]
	$\xt$ is $y$-divisible.
\end{definition}

In simpler terms, \thref{offspringdistributionzdivisibility} says that for a DCBP to be $\zt$-divisible, 
$\xt$ must be divisible by all the prime factors of $z$ that $\phi(z)$ is not divisible by, for any attainable $z$ (except zero).

\begin{remark}
    When $\cbp$ is a DCBP, \thref{offspringdistributionzdivisibility} extends \thref{def:zdivisiblecontrolfunctions}: any DCBP with a $\zt$-divisible control function is a $\zt$-divisible DCBP.
\end{remark}

\begin{proposition}\label{DCBPEquivalenceConditions}
	Let $\cbp$ be a DCBP with control function $\phi(\cdot)$ and offspring distribution $\xt$.
	Then $\cbp$ can be expressed as a PSDBP if and only if $\cbp$ is $\zt$-divisible.
\end{proposition}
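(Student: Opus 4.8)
The plan is to prove both directions using \thref{BPEquivalence}, which reduces equivalence to matching the one-step conditional distributions $(\zt_n \mid \zt_{n-1}=z) \stackrel{d}{=} (Z_n \mid Z_{n-1}=z)$ for every attainable $z$. For a DCBP, the left side is the distribution of $\sum_{i=1}^{\phi(z)}\xt_i$, and for a PSDBP the right side is $\sum_{i=1}^z \xi_i(z)$ for some offspring law $\xi(z)$ we are free to choose. So the task is: for each attainable $z\in\N_1$, the random variable $S_z := \sum_{i=1}^{\phi(z)}\xt_i$ must be $z$-divisible (and the $z=0$ case forces $\phi(0)=0$, since a PSDBP is absorbed at $0$). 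Hence the proposition is equivalent to the statement that $S_z$ is $z$-divisible for all attainable $z\geq 1$ if and only if $\xt$ is $y$-divisible for every $y\in\mathcal{Y}_{\zt}$.

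For the \textbf{``if'' direction}, fix an attainable $z\geq 1$ and write $d=\gcd\{\phi(z),z\}$, $y=z/d$, $a=\phi(z)/d$, so $\phi(z)=ady$ and $\gcd(a,y)=1$. By hypothesis $\xt$ is $y$-divisible, so there are i.i.d.\ $\{\eta_j\}_{j\in[y]}$ with $\xt \stackrel{d}{=}\sum_{j=1}^y \eta_j$. Then
\[
    S_z \;=\; \sum_{i=1}^{ady}\xt_i \;\stackrel{d}{=}\; \sum_{i=1}^{ady}\sum_{j=1}^{y}\eta_{i,j}
    \;\stackrel{d}{=}\; \sum_{k=1}^{z}\Big(\sum_{\ell=1}^{ad}\eta'_{k,\ell}\Big),
\]
where in the last step I have regrouped the $ady^2 = ad z$ i.i.d.\ copies of $\eta$ into $z$ blocks of $ad$ each. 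Thus $S_z$ is $z$-divisible with component law $\xi(z):=\sum_{\ell=1}^{ad}\eta'_\ell$, and setting $\xi(0)$ arbitrarily (it is never used, since $\phi(0)=0$ makes $0$ absorbing) gives the required PSDBP via \thref{BPEquivalence}.

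For the \textbf{``only if'' direction}, suppose $\cbp$ can be written as a PSDBP; then $\phi(0)=0$ and each $S_z$ ($z$ attainable, $z\geq 1$) is $z$-divisible. I must deduce that $\xt$ is $y$-divisible for $y=z/\gcd\{\phi(z),z\}$. The clean tool is probability generating functions: let $g(s)=\E s^{\xt}$, so $\E s^{S_z}=g(s)^{\phi(z)}$, and $z$-divisibility says $g(s)^{\phi(z)}=h(s)^z$ for some p.g.f.\ $h$ on $\N_0$. Writing $\phi(z)=a d$, $z = y d$ with $\gcd(a,y)=1$, this reads $g^{ad}=h^{yd}$, i.e.\ $g^{a}=h^{y}$ as formal identities of analytic functions on a neighbourhood of $0$ where $g(0)>0$ may fail — here the main technical care is needed: if $g(0)=\P(\xt=0)>0$ one can take a well-defined analytic $y$-th root locally; if $g(0)=0$ one factors out the lowest power of $s$. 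Since $\gcd(a,y)=1$, there are integers $u,v$ with $ua+vy=1$ (choosing $u$ positive if necessary by adjusting $v$), and then $g = g^{ua+vy} = (h^u)(g^{v})^{y}\cdot$(bookkeeping) shows $g$ is a $y$-th power of a p.g.f.; the honest version of this step is the Bézout/root-extraction argument establishing that $g^a=h^y$ with $\gcd(a,y)=1$ forces $g=f^y$ for an analytic $f$, and one then checks $f$ is genuinely a p.g.f.\ (nonnegative coefficients, $f(1)=1$) using that both $g$ and $h$ are. That $f$ is a p.g.f.\ yields the $y$-divisibility of $\xt$.

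\medskip

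\noindent\emph{Main obstacle.} The routing through \thref{BPEquivalence} and the ``if'' regrouping are straightforward; the crux is the ``only if'' step, specifically showing that $g^a = h^y$ with $\gcd(a,y)=1$ and $g,h$ p.g.f.'s implies $\xt$ is $y$-divisible. One must handle the analytic root extraction carefully at $s=0$ (atom at $0$ or not), and — more delicately — verify that the formal $y$-th root $f$ of $g$ has nonnegative power-series coefficients so that it is an honest probability generating function. I expect this divisibility-algebra lemma to be where the real work lies; it may be cleanest to isolate it as a standalone lemma (``if $X$ on $\N_0$ is such that $a$ i.i.d.\ copies sum to a $y$-divisible variable with $\gcd(a,y)=1$, then $X$ is $y$-divisible'') and cite or reprove it from the theory of divisibility of lattice random variables in \cite{steutel03}.
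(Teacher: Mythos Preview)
Your ``if'' direction is correct and coincides with the paper's argument: split $\xt$ into $y$ i.i.d.\ pieces and regroup the resulting $\phi(z)\,y$ summands into $z$ blocks. For the ``only if'' direction you take a different route from the paper. The paper argues the contrapositive and appeals to a short grouping lemma (\thref{divisiblefactors}: $p$-divisibility implies $q$-divisibility whenever $q\mid p$), whereas you work directly with the PGF identity $g^{a}=h^{y}$ and attempt a B\'ezout extraction of a $y$-th root of $g$.

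There is a genuine gap in your ``only if'' sketch, and you have correctly located it. From $ua+vy=1$ one indeed obtains $g=(h^{u}g^{v})^{y}$ as an identity of analytic functions, but when $a,y>1$ one of $u,v$ is necessarily nonpositive, so $h^{u}g^{v}$ is a \emph{quotient} of PGFs and may a priori have negative Taylor coefficients; the clause ``one then checks $f$ is genuinely a p.g.f.'' is carrying the entire weight of the argument, not a formality. Your instinct to isolate the statement ``if $\sum_{i=1}^{a}X_i$ is $y$-divisible with $\gcd(a,y)=1$ then $X$ is $y$-divisible'' as a standalone lemma is the right move, but that lemma needs an actual proof rather than a gesture toward \cite{steutel03}. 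It is worth noting that the paper's own proof is extremely terse at the corresponding step: the single appeal to \thref{divisiblefactors} does not on its face yield ``$\xt$ not $y^*$-divisible $\Rightarrow \sum_{i=1}^{\phi(z)}\xt_i$ not $z$-divisible,'' since that lemma only passes divisibility \emph{down} to divisors of the index, not \emph{up} to an lcm. Both routes therefore converge on the same unproved kernel, and that is where the substantive work must be done.
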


We demonstrate the application of \thref{DCBPEquivalenceConditions} in the following example:

\begin{exmp}\;
\begin{itemize}
    \item[(i)] Consider the DCBP $\cbp$ with $\xt \sim \bin\big(2, \frac{1}{2}\big)$ and
        \[
            \phi(z) = \begin{cases}
    			z, & z \text{ odd} \\ 
    			\frac{z}{2}, & z \text{ even}.
    		\end{cases}
        \]
        Here, $\frac{z}{\text{gcd}[\phi(z), z]} = 1$ when $z$ is odd, and $\frac{z}{\text{gcd}[\phi(z), z]} = 2$ when $z$ is even,
        so $\mathcal{Y}_{\tilde{Z}} = \{1, 2\}$.
        Since $\xt$ is 2-divisible, \thref{DCBPEquivalenceConditions} guarantees that there is a PSDBP equivalent to $\cbp$.
        Indeed, the PSDBP $\psdbp$ with
		\[
			\xi(z) \sim \begin{cases}
				\text{Bin}\big(2, \frac{1}{2}\big), & z \text{ odd} \\ 
				\text{Ber}\big(\frac{1}{2}\big), & z \text{ even}
			\end{cases}
		\]
		is exactly equivalent to $\cbp$.
    \null\hfill$\blacklozenge$

    \item[(ii)] Consider now the DCBP $\cbp$ with $\phi(z) = \max\{z-1,0\}$, $z\in\N_0$
        and $\xt \sim \poi(\lambda)$, $\lambda > 0$. Here $\mathcal{Y}_{\tilde{Z}} = \N_1$,
        but since $\xt$ is infinitely divisible, we see that $\cbp$ is $\zt$-divisible,
        and hence \thref{DCBPEquivalenceConditions} tells us that $\cbp$ can be equivalently expressed as a PSDBP --- we can then find that this PSDBP is such that $\xi(z) \sim \poi( (z-1)\lambda / z )$.
    \null\hfill$\blacklozenge$
\end{itemize}
\end{exmp}

\section{Approximately Equivalent Branching Processes}\label{sec:approxequal}

While we saw in Section \ref{sec:EquivalentBranchingProcesses} that we can often find pairs of equivalent PSDBPs and CBPs, we also observed cases where exact equivalence is not possible.
In practical applications, however, the lack of exact equivalence becomes less significant when the two processes are close in distribution.

How might we then find PSDBPs and CBPs that are close in distribution?
The requirement we impose is that both $(Z_n | Z_{n-1} = z)$ and $(\tilde{Z}_n | \tilde{Z}_{n-1} = z)$ have the same mean and variance for all $z$.
After all, the population size of a given generation is generated, for both CBPs and PSDBPs, as a sum of i.i.d.\ random variables; as a result, if we require that both sums have the same mean and variance, we may expect a central-limit-theorem-like result to apply as the number of summed terms gets large.
Informally, we expect that the paths of non-equivalent PSDBPs and CBPs with the same conditional mean and variance become more and more alike as their initial population size increases.

\begin{definition}
    We say that a PSDBP $\psdbp$ and a CBP $\cbp$, both with initial population size $z_0$, 
    \textit{\textbf{match}}, or have \textit{\textbf{matching moments}}, if for $n \in \N_1$ they satisfy
    \begin{align}
    	\E(Z_n | Z_{n-1} = z) &= \E(\tilde{Z}_n | \tilde{Z}_{n-1} = z), \label{eqn:matchingmean} \\
    	\V(Z_n | Z_{n-1} = z) &= \V(\tilde{Z}_n | \tilde{Z}_{n-1} = z), \label{eqn:matchingvariance}
    \end{align}
    for all attainable $z\in\N_0$.
\end{definition}

We formalise the notion of closeness in distribution in terms of convergence in the total variation distance. We show that, under certain conditions, the total variation distance between a PSDBP and a CBP with matching moments approaches zero as the initial population size of the processes, $z$, approaches infinity. Moreover, we show that this convergence occurs at a rate of $1/\sqrt{z}$.

\begin{definition}
	For random variables $X$ and $Y$ taking values in a space $\mathcal{X}$, the \textit{\textbf{total variation distance (TVD)}}
	between their distributions is 
	\[
		|| \L_X - \L_Y ||_{TV} := \sup_{A \subseteq \mathcal{X}} | \L_X(A) - \L_Y(A) |
		\equiv \sup_{A \subseteq \mathcal{X}} | \P(X \in A) - \P(Y \in A) |,
	\]
    where $A$ is an event, and $\L_X$ denotes the law of the random variable $X$.
\end{definition}

\begin{remark}
    If $\mathcal{X}$ is a countable space (e.g.\ $\mathcal{X} = \N_0^k$ for some $k \geq 1$), the TVD has the following equivalent representations:
    \begin{align}
        || \L_X - \L_Y ||_{TV} &= \frac{1}{2} \sum_{n\in\mathcal{X}} | \P(X = n) - \P(Y = n) |, \label{eqn:TVDEquivalentSum} \\
        || \L_X - \L_Y ||_{TV} &= 1 - \sum_{n\in\mathcal{X}} \P(X = n) \wedge \P(Y = n); \label{eqn:TVDOneMinusEquivalentSum}
    \end{align}
    see for example \cite[Chapter 4]{levin17}.
\end{remark}

\subsection{Distance between PSDBPs and DCBPs with Matching Moments}\label{sec:distancebetweenmatchingPSDBPsandDCBPs}

The simplest situation in which to bound the TVD between a PSDBP and a CBP with matching moments is when the CBP in question has a deterministic control function.
Given a PSDBP-DCBP pair with matching moments, for which necessary and sufficient conditions are stated in Section \ref{sec:matchingmomentsPSDBPandDCBP},
we impose the following three regularity conditions, with $\tilde{\rho} := \E|\tilde{\xi} - \tilde{m}|^3$ and $\rho(x) := \E|\xi(x) - m(x)|^3$. \\

\noindent\textbf{TVD Bound Regularity Conditions:}
\begin{enumerate}
    \item[\;\textbf{(C1)}] There exists $h \in \R_{>0}$ such that $\phi(x) \geq h \cdot x$ for all $x\in\N_1$.
    \item[\textbf{(C2)}] There exists $R \in \R_{>0}$ such that $\tilde{\rho} \vee \rho(x) \leq R$ for all $x \in \N_1$.
    \item[\textbf{(C3)}] There exists $\eta \in \R_{>0}$ such that every $x\in \N_1$ has a corresponding $n_x \in \N_1$ with $\P(\xi(x) = n_x) \wedge \P(\xi(x) = n_x - 1) \geq 2\eta$, and there is an $n\in\N_1$ such that $\P(\tilde{\xi} = n) \wedge \P(\tilde{\xi} = n - 1) \geq 2\eta$. \\
\end{enumerate}

Under these conditions, the following lemma provides an upper bound on the TVD between the one-step distributions of a PSDBP and DCBP with matching moments and started from the same population size $z$.
The bound is decreasing in $z$, with optimal rate $O(z^{-1/2})$.

\begin{lemma}\label{OneStepTVDBound}
	Let $\psdbp$ be a PSDBP, and $\cbp$ a DCBP, with matching first and second moments.
    Then if $\psdbp$ and $\cbp$ satisfy conditions (C1), (C2) and (C3), we have
	\[
		|| \L_{Z_1|Z_0 = z} - \L_{\tilde{Z}_1|\tilde{Z}_0 = z} ||_{TV} \leq \J(z)
	\]
	for all $z \in \N_1$, where $\J : \R_{>0} \to \R_{>0}$ is a monotonically decreasing function defined by
	\[
		 \J(z) := \frac{\sqrt{2}\left( 3R + 2(1 + h)\tilde{\sigma}^2 \right)}{\tilde{\sigma}^2 (h\wedge 1) \sqrt{\pi \eta z}}
			+ \frac{\left( 5\sqrt{2\pi} + \frac{3\pi}{2} \right) (1 + h) R + h\tilde{\sigma}^2}{\tilde{\sigma}^3\sqrt{2\pi h^3 z}}.
	\]
\end{lemma}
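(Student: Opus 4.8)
The plan is to realise both one-step distributions as laws of sums of i.i.d.\ integer-valued random variables with matching mean and variance, and then compare each of them to a common (discretised) Gaussian via a quantitative local central limit theorem. Write $N := \phi(z)$ for the (deterministic) number of offspring variables in the DCBP step, so that $\L_{\tilde Z_1 \mid \tilde Z_0 = z}$ is the law of $S_N := \sum_{i=1}^{N} \tilde\xi_i$ with $\E S_N = N\tilde m$ and $\V S_N = N\tilde\sigma^2$; matching moments (equations~\eqref{eqn:matchingmean}--\eqref{eqn:matchingvariance}) force $z\,m(z) = N\tilde m$ and $z\,\sigma^2(z) = N\tilde\sigma^2$, and on the PSDBP side $\L_{Z_1 \mid Z_0 = z}$ is the law of $T_z := \sum_{i=1}^{z}\xi_i(z)$ with the same mean $z\,m(z)$ and same variance $z\,\sigma^2(z)$. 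By the triangle inequality for $\|\cdot\|_{TV}$ it suffices to bound $\|\L_{S_N} - \mathcal N\|_{TV}$ and $\|\L_{T_z} - \mathcal N\|_{TV}$ separately, where $\mathcal N$ is the same discretised normal distribution with that common mean and variance; note (C1) gives $N = \phi(z)\ge hz$, so $N$ grows at least linearly in $z$ and a bound in terms of $N^{-1/2}$ on the DCBP side is also a bound of order $z^{-1/2}$.

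The two key quantitative inputs are: (a) a Berry--Esseen-type local limit bound for sums of i.i.d.\ lattice random variables, controlling the total variation distance between $\L_{S_n}$ (resp.\ $\L_{T_z}$) and the appropriate discretised Gaussian in terms of the third absolute moment $\tilde\rho$ (resp.\ $\rho(z)$), the variance, and the number of summands; here (C2) supplies the uniform bound $\tilde\rho\vee\rho(z)\le R$. The subtlety is that $T_z$ is a sum of $z$ copies of $\xi(z)$, whose distribution changes with $z$; but for fixed $z$ it is still a genuine i.i.d.\ sum, and the relevant local-CLT estimate is uniform once we control the third moment and bound the variance away from zero, which matching moments plus (C1) achieve since $\sigma^2(z) = N\tilde\sigma^2 / z \ge h\tilde\sigma^2$. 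Input (b) is an anti-concentration / non-degeneracy ingredient ensuring the lattice span does not collapse and that the local CLT is not vacuous: this is exactly the role of (C3), which guarantees that both $\xi(z)$ and $\tilde\xi$ put mass at least $2\eta$ on two consecutive integers, so each summand has a ``smoothing'' component of a $\ber$-type increment. A clean way to use (C3) is a conditioning/coupling argument: decompose each $\xi(z)$ (resp.\ $\tilde\xi$) as a mixture that with probability $\ge 4\eta$ equals a random translate of a fair-coin Bernoulli, so that conditionally on the mixture choices one is convolving with a $\bin(m,1/2)$ for some $m$ of order $\eta z$, which immediately yields a TVD-to-Gaussian bound of order $(\eta z)^{-1/2}$ via the smoothing, and the remaining Berry--Esseen correction of order $R z^{-1/2}/\tilde\sigma^3$ handles the shape.

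Concretely, the steps I would carry out, in order, are: (1) use Lemma~\ref{BPEquivalence}-style reductions and matching moments to write both laws as i.i.d.\ sums with identical mean and variance and record the identities $N\tilde m = z m(z)$, $N\tilde\sigma^2 = z\sigma^2(z)$, $N\ge hz$, $\sigma^2(z)\ge h\tilde\sigma^2$; (2) fix a common target: the distribution on $\Z$ obtained by assigning to each integer $k$ the Gaussian density at $k$ with mean $zm(z)$ and variance $z\sigma^2(z)$, suitably normalised, and control the normalisation error; (3) prove the DCBP-side bound $\|\L_{S_N} - \mathcal N\|_{TV}\le c_1 R/(\tilde\sigma^2\sqrt{\eta z}) + c_2 R/(\tilde\sigma^3\sqrt{h^3 z})$ by combining the (C3)-coupling smoothing estimate with a Berry--Esseen local CLT, tracking the explicit constants $5\sqrt{2\pi} + 3\pi/2$ etc.; (4) prove the PSDBP-side bound $\|\L_{T_z} - \mathcal N\|_{TV}\le c_1' R/(\tilde\sigma^2\sqrt{\eta z}) + \cdots$ identically, now using the $\xi(z)$ half of (C3) and $\sigma^2(z)\ge h\tilde\sigma^2$ to re-express everything in terms of $\tilde\sigma$ and $h$; (5) add the two bounds, collect terms, and bound the resulting constants by the stated closed form $\J(z)$, finally checking $\J$ is monotonically decreasing in $z$ (which is immediate since it is a positive combination of $z^{-1/2}$ terms). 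The main obstacle is Step (3)--(4): getting a local (total-variation) CLT with fully explicit constants --- as opposed to the classical Berry--Esseen bound on the Kolmogorov distance --- and correctly combining the lattice-smoothing argument driven by (C3) with the third-moment correction so that the exponents of $h$, $\tilde\sigma$, $\eta$ and the numerical constants come out exactly as in the statement of $\J(z)$; the asymmetry that the DCBP sum has $\phi(z)\ge hz$ terms while the PSDBP sum has exactly $z$ terms must be handled carefully so that both contributions are expressed against the same $\sqrt{z}$ scale.
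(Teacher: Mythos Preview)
Your high-level architecture matches the paper's exactly: reduce to two i.i.d.\ sums with identical mean and variance, insert a discretised normal in the middle via the triangle inequality, and bound each piece by a quantitative local CLT in which (C2) controls the third-moment term and (C3) supplies the lattice smoothing. You also correctly extract the relations $\sigma^2(z)=\phi(z)\tilde\sigma^2/z\ge h\tilde\sigma^2$ and $\phi(z)\ge hz$ needed to express both sides against the same $\sqrt z$ scale.

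Where you diverge is in the execution of the local CLT step. The paper does not build a smoothing coupling from scratch: it invokes Theorem~7.4 of Chen--Goldstein--Shao \cite{chen11}, which for an i.i.d.\ sum $S_n$ gives a TVD-to-discretised-normal bound with a leading term proportional to $\|\L_{S_{n-1}}-\L_{S_{n-1}+1}\|_{TV}$, and then bounds that shift-TVD via Corollary~1.6 of Mattner--Roos \cite{mattner07}, which controls it by $(1+4(n-1)(1-\|\L_X-\L_{X+1}\|_{TV}))^{-1/2}$. Condition~(C3) enters only through the elementary inequality $1-\|\L_X-\L_{X+1}\|_{TV}=\sum_i \P(X=i)\wedge\P(X=i-1)\ge 2\eta$, and the final constants in $\J(z)$ are then read off by algebraic simplification (using $\eta\le 1/4$). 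Your proposed route---decompose each summand as a mixture that with probability $4\eta$ is $(n-1)+\ber(1/2)$ and argue that conditionally one convolves with a $\bin(m,1/2)$ of order $\eta z$---is a legitimate alternative (it is essentially a Mineka/Doeblin minorisation), and would in principle recover the same $z^{-1/2}$ rate with the same dependence on $\eta$. But you correctly flag the obstacle: extracting the \emph{exact} numerical constants $5\sqrt{2\pi}+3\pi/2$, the precise powers of $h$ and $\tilde\sigma$, and the additive $h\tilde\sigma^2$ term in the numerator from a hand-built coupling is substantially more work than reading them off the cited theorems. The paper's route buys those constants for free; yours would buy self-containedness at the cost of a longer computation that you have not yet carried out.

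One small correction: in your step~(2) you describe the intermediate law as ``the Gaussian density at $k$, suitably normalised''. The paper uses the \emph{discretised} normal, i.e.\ $\P(W=k)=\int_{k-1/2}^{k+1/2}\varphi_{\mu,\sigma^2}(u)\,du$, not the pointwise density; this is what Theorem~7.4 of \cite{chen11} is stated for, and it avoids the separate normalisation-error bookkeeping you allude to.
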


\thref{OneStepTVDBound} relies on a comparison of $\psdbp$ and $\cbp$ to a discretised normal distribution that leverages the results of \cite{chen11}. 
Bounds of the same order can also be found by replacing the discretised normal with another distribution,
for example the translated Poisson distribution, as in \cite{barbour99},
or the distribution proposed in \cite{goldstein06}.

Using an inductive argument, we can extend this bound from the one-step case to consider longer trajectories of $\psdbp$ and $\cbp$.

\begin{proposition}\label{KStepTVDBound}
	Let $\psdbp$ be a PSDBP, and $\cbp$ a DCBP, with matching first and second moments.
    Then if $\psdbp$ and $\cbp$ satisfy conditions (C1), (C2) and (C3), we have, for any $z \in \N_1$, $k\in\N_1$, and $\alpha \in (0,1)$,
	\[
		||\L_{(Z_1, \dots, Z_k)|Z_0 = z} - \L_{(\tilde{Z}_1, \dots, \tilde{Z}_k)|\tilde{Z}_0 = z}||_{TV}
			\leq \sum_{i=0}^{k-1} \J\big( ( \alpha \tilde{m} h )^i \cdot z \big)
			+ \frac{\tilde{\sigma}^2}{(1 - \alpha)^2 \tilde{m}^2 h \cdot z} \cdot \sum_{i=0}^{k-2} ( \alpha \tilde{m} h )^{-i}.
	\]
\end{proposition}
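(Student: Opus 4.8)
The plan is to build a step-by-step Markovian coupling of the two $k$-step trajectories and to bound the probability that they ever disagree. Starting from $Z_0 = \tilde Z_0 = z$, at each step $j$ I would proceed as follows: if the two chains currently agree, say $Z_{j-1} = \tilde Z_{j-1} = x$, draw $(Z_j,\tilde Z_j)$ from a maximal coupling of $\L_{Z_1\mid Z_0 = x}$ and $\L_{\tilde Z_1\mid \tilde Z_0 = x}$, so that by \thref{OneStepTVDBound} the conditional probability that $Z_j \neq \tilde Z_j$ is exactly $||\L_{Z_1\mid Z_0=x}-\L_{\tilde Z_1\mid\tilde Z_0=x}||_{TV}\le \J(x)$; if the chains already disagree, evolve them (say, conditionally independently) from their respective one-step kernels. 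Since the $Z$- and $\tilde Z$-marginals of the chosen kernel are correct at every step, this is a genuine coupling of $\L_{(Z_1,\dots,Z_k)\mid Z_0=z}$ and $\L_{(\tilde Z_1,\dots,\tilde Z_k)\mid\tilde Z_0=z}$; moreover the $\tilde Z$-marginal process is an honest DCBP started from $z$ whatever happens to the coupling. Writing $T := \inf\{j\ge 1 : Z_j \neq \tilde Z_j\}$, the coupling inequality gives $||\L_{(Z_1,\dots,Z_k)\mid Z_0=z}-\L_{(\tilde Z_1,\dots,\tilde Z_k)\mid\tilde Z_0=z}||_{TV}\le \P(T\le k)$, and everything reduces to estimating $\P(T\le k)$.

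Because $\J$ is decreasing, the one-step bound is only useful while the (common, pre-$T$) population is large, so I would introduce the geometric barrier $b_j := (\alpha\mt h)^j z$ and the first-crossing time $\tau := \inf\{j \ge 1 : \tilde Z_j < b_j\}$ of the DCBP marginal. Condition (C1) gives $\E(\tilde Z_j \mid \tilde Z_{j-1} = x) = \phi(x)\mt \ge h x\mt$, while $\V(\tilde Z_j \mid \tilde Z_{j-1}=x) = \phi(x)\st^2$; since $b_j \le \alpha\mt h\, b_{j-1} \le \alpha\mt h x \le \alpha\phi(x)\mt$ whenever $x \ge b_{j-1}$, Chebyshev's inequality yields $\P(\tilde Z_j < b_j \mid \F_{j-1}) \le \phi(x)\st^2 / \big((1-\alpha)^2\phi(x)^2\mt^2\big) \le \st^2 / \big((1-\alpha)^2 h\mt^2 b_{j-1}\big)$ on $\{\tilde Z_{j-1} = x \ge b_{j-1}\}$ --- the cancellation of $\phi(x)$ being what makes this work in the absence of any upper bound on $\phi$. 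Summing $\P(\tau = j) \le \P(\tau > j-1,\ \tilde Z_j < b_j)$ over $j = 1,\dots,k-1$ then gives $\P(\tau \le k-1) \le \frac{\st^2}{(1-\alpha)^2\mt^2 h z}\sum_{i=0}^{k-2}(\alpha\mt h)^{-i}$, the second term of the claimed bound.

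Finally I would split $\P(T \le k) \le \P(T \le k,\ \tau < T) + \P(T \le k,\ \tau \ge T)$. On $\{\tau < T \le k\}$ one has $\tau \le k-1$, so the first term is at most $\P(\tau \le k-1)$, already controlled. For the second term, decompose over the value of $T$: on $\{T = j\}\cap\{\tau \ge j\}$ the chains agree at time $j-1$ with common value $Z_{j-1} = \tilde Z_{j-1} \ge b_{j-1}$, so, conditioning on $\F_{j-1}$, the disagreement probability at step $j$ is at most $\J(Z_{j-1}) \le \J(b_{j-1})$; hence $\P(T \le k,\ \tau \ge T) \le \sum_{j=1}^{k}\J(b_{j-1}) = \sum_{i=0}^{k-1}\J\big((\alpha\mt h)^i z\big)$. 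Adding the two pieces reproduces the statement, and one checks directly that the degenerate cases $k = 1$ (empty second sum, recovering \thref{OneStepTVDBound}) and $\alpha\mt h \ge 1$ require no changes.

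The main obstacle, and the point needing the most care, is making the two stopping times interact cleanly with the coupling: one must verify that the $\tilde Z$-marginal remains a bona fide DCBP after the coupling breaks (so the barrier estimate for $\tau$ is legitimate), that the events on which \thref{OneStepTVDBound} is invoked genuinely force the shared population above $b_{j-1}$, and that the conditioning-on-$\F_{j-1}$ steps are justified. The Chebyshev estimate and the geometric-sum bookkeeping are routine once the cancellation of $\phi(x)$ is noticed.
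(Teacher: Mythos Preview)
Your argument is correct and constitutes a genuinely different route from the paper's.  The paper proceeds analytically: it writes the $k$-step TVD in the form \eqref{eqn:TVDEquivalentSum}, applies the triangle inequality to peel off one step, and obtains the recursion
\[
\|\cdot\|_{(j,\dots,k)} \le \J(u_{j-1}) + \sum_{u_j} p_{\tilde Z_j}(u_{j-1},u_j)\,\|\cdot\|_{(j+1,\dots,k)},
\]
then splits the sum at $N=\lfloor\alpha\phi(u_{j-1})\tilde m\rfloor$, bounds the low part by $\P(\tilde Z_j\le N)$ via Chebyshev and the high part by the inductive hypothesis $\K_{j+1}(N+1)$.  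Unrolling the recursion $\K_j(u)=\J(u)+\frac{\tilde\sigma^2}{(1-\alpha)^2\tilde m^2 h u}+\K_{j+1}(\alpha\tilde m h\,u)$ gives the stated bound.  You instead build a single maximal Markovian coupling, introduce the decoupling time $T$ and the barrier-crossing time $\tau$ for the $\tilde Z$-marginal, and split $\P(T\le k)$ according to whether $\tau<T$ or $\tau\ge T$.  The two proofs use identical ingredients (the one-step bound $\J$, the Chebyshev estimate for $\tilde Z_j$ dropping below $\alpha\phi(u_{j-1})\tilde m$, monotonicity of $\J$, and the cancellation of $\phi$), so neither is sharper; what differs is the packaging.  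Your approach is more probabilistic and arguably cleaner---the two stopping times make the ``drift failure'' and ``coupling failure'' contributions visible as separate events rather than as two pieces of a recursively defined function---while the paper's induction works directly with the TVD formula and so transplants more mechanically to settings where a convenient coupling is less obvious.
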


\begin{remark}
   For any fixed $k\in\N_1$, and regardless of the value of $\alpha$, \thref{KStepTVDBound} provides a bound on the TVD between $\psdbp$ and $\cbp$ that decreases in $z$, with optimal rate
    \[
    	||\L_{(Z_1, \dots, Z_k)|Z_0 = z} - \L_{(\tilde{Z}_1, \dots, \tilde{Z}_k)|\tilde{Z} = z}||_{TV} = O\bigg( \frac{1}{\sqrt{z}} \bigg),
    \]
    a result inherited from \thref{OneStepTVDBound}, since $\J(z) = O(z^{-1/2})$.
    This implies $$||\L_{(Z_1, \dots, Z_k)|Z_0 = z} - \L_{(\tilde{Z}_1, \dots, \tilde{Z}_k)|\tilde{Z} = z}||_{TV} \to 0\quad\text{as $z\to\infty$,} $$
    so in the population limit the two processes will be indistinguishable over a fixed number of $k$ generations.
\end{remark}

\begin{remark}\label{MatchingZIPandNB}
    Although stated in terms of DCBPs, \thref{OneStepTVDBound} and \thref{KStepTVDBound} apply to certain CBPs with random control functions as well.
    For example, suppose that  $\lambda > 0$, $M \in \N_1$, and consider the CBP $\cbp$ with
    \[
        \pt(z) \sim \bin\left((z + M)\1_{\{z>0\}},\; \frac{1}{\lambda}\right) \quad \text{and} \quad
    	\xt \sim \poi\left( \lambda \right),
    \]
    and the PSDBP $\psdbp$ with
    \[
    	\xi(z) \sim \nb\left(\frac{z+M}{z},\; \frac{1}{\lambda} \right).
    \]
    \thref{PoissonOffspringNoEquivalence} tells us that there is no PSDBP equivalent to $\cbp$,
    but we can verify that both process have matching moments.
    Since, for any $z\in\N_1$,
    \[
        \sum_{i=1}^{\pt(z)} \xt_i \stackrel{d}{=} \sum_{i=1}^{\nu(z)} \zeta_i,
    \]
    where $\nu(z) := (z + M)\1_{\{z>0\}}$ and $\zeta_i \stackrel{i.i.d.}{\sim} \zip\left( 1 - \frac{1}{\lambda},\, \lambda \right)$
    ($\zip$ here refers to the zero-inflated Poisson distribution),
    we can express $\cbp$ as a DCBP ---
    allowing us to use our framework in \thref{KStepTVDBound} to determine a bound on the TVD between the two processes.
\end{remark}

\begin{remark}
    It is possible to generalise \thref{KStepTVDBound} in several directions,
    at the expense of more delicate regularity conditions.
    Two such generalisations are to the case of a PSDBP and a CBP with moments that do not match exactly,
    and to the case of a PSDBP and a hybrid DCBP-PSDBP, that is, a process that evolves by the recurrence relation
    \begin{equation*}
        \zt_n = \sum_{i=1}^{\phi(\zt_{n-1})} \xt_{n,i}(Z_{n-1}), \quad n \geq 1,
    \end{equation*}
    where, for every $x \in \N_0$, $\{\xt_{n,i}(x)\}_{n,i\in\N_1}$ is a family of i.i.d.\ $\N_0$-valued random variables
    sharing a common distribution with a random variable $\xi(x)$,
    and $\phi: \N_0 \to \N_0$ is a deterministic control function.
\end{remark}

\begin{corollary}\label{ClosedFormKStepTVDBound}
	For each pair of processes $\psdbp$ and $\cbp$ with matching first and second moments and satisfying conditions (C1), (C2) and (C3),
	there exist constants $b, c_1, c_2 \in \R_{>0}$ such that
	\[
		||\L_{Z_1|Z_0 = z} - \L_{\tilde{Z}_1|\tilde{Z}_0 = z}||_{TV} \leq \frac{b}{\sqrt{z}},
	\]
	and for $k > 1$,
	\[
		||\L_{(Z_1, \dots, Z_k)|Z_0 = z} - \L_{(\tilde{Z}_1, \dots, \tilde{Z}_k)|\tilde{Z}_0 = z}||_{TV}
		\leq \frac{c_1 \big| 1 - (\alpha\tilde{m}h)^{-\frac{k}{2}} \big|}{\sqrt{z}}
			+ \frac{c_2 \big| 1 - (\alpha\tilde{m}h)^{-k+1} \big|}{z}.
	\]
\end{corollary}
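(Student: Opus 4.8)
The plan is to recognise that both displayed bounds are simply repackagings of \thref{OneStepTVDBound} and \thref{KStepTVDBound}: the function $\J$ is already $O(z^{-1/2})$ with an explicit constant, and the two sums appearing in \thref{KStepTVDBound} are finite geometric series. So the whole proof is bookkeeping — collect the process-dependent quantities $h, R, \eta, \tilde\sigma, \tilde m, \alpha$ into the constants $b, c_1, c_2$.

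First I would dispatch the one-step bound. Inspecting the definition of $\J$, both of its summands are of the form (constant)$/\sqrt{z}$, so $\J(z) = b/\sqrt{z}$ with
\[
	b := \frac{\sqrt{2}\left(3R + 2(1+h)\tilde{\sigma}^2\right)}{\tilde{\sigma}^2(h\wedge 1)\sqrt{\pi\eta}}
		+ \frac{\left(5\sqrt{2\pi} + \tfrac{3\pi}{2}\right)(1+h)R + h\tilde{\sigma}^2}{\tilde{\sigma}^3\sqrt{2\pi h^3}},
\]
which is a strictly positive real number (using that conditions (C1)--(C3) force $h,R,\eta>0$ and that (C3) makes $\tilde\xi$ non-degenerate, so $\tilde\sigma^2>0$), depending only on the pair of processes. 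The one-step inequality then follows immediately from \thref{OneStepTVDBound}.

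Next I would treat the case $k>1$ by starting from the bound in \thref{KStepTVDBound} and substituting $\J(w) = b/\sqrt{w}$. The first sum becomes
\[
	\sum_{i=0}^{k-1} \J\big((\alpha\tilde m h)^i z\big)
		= \frac{b}{\sqrt{z}}\sum_{i=0}^{k-1}(\alpha\tilde m h)^{-i/2}
		= \frac{b}{\sqrt{z}}\cdot\frac{1 - (\alpha\tilde m h)^{-k/2}}{1 - (\alpha\tilde m h)^{-1/2}},
\]
and, since $1-(\alpha\tilde m h)^{-k/2}$ and $1-(\alpha\tilde m h)^{-1/2}$ have the same sign, setting $c_1 := b\big/\big|1-(\alpha\tilde m h)^{-1/2}\big|$ gives the first term $c_1\big|1-(\alpha\tilde m h)^{-k/2}\big|/\sqrt z$. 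The same computation on the second sum, $\sum_{i=0}^{k-2}(\alpha\tilde m h)^{-i} = \big(1-(\alpha\tilde m h)^{-(k-1)}\big)\big/\big(1-(\alpha\tilde m h)^{-1}\big)$, together with $c_2 := \tilde\sigma^2\big/\big((1-\alpha)^2\tilde m^2 h\,\big|1-(\alpha\tilde m h)^{-1}\big|\big)$, produces the second term $c_2\big|1-(\alpha\tilde m h)^{-k+1}\big|/z$; adding the two pieces yields the claim.

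There is no genuinely hard step here; the one point that needs a word of care is the degenerate case $\alpha\tilde m h = 1$, in which the geometric-series formulas above are invalid (the sums are then linear in $k$). This is handled either by observing that $\alpha$ ranges over $(0,1)$ and may always be chosen to avoid the single excluded value, or by reading $c_1,c_2$ as the corresponding limiting constants when equality holds. In every other respect the corollary is a direct consequence of \thref{OneStepTVDBound} and \thref{KStepTVDBound}.
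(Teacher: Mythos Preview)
Your proposal is correct and follows essentially the same route as the paper: read off $b$ from the explicit form of $\J$, then for $k>1$ evaluate the two sums in \thref{KStepTVDBound} as finite geometric series and absorb the denominators into $c_1,c_2$, choosing $\alpha$ so that $\alpha\tilde m h\neq 1$. Your constants agree with the paper's after a trivial rewriting, and your remark that (C3) forces $\tilde\sigma^2>0$ is a nice extra justification the paper leaves implicit.
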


\thref{ClosedFormKStepTVDBound} shows that the behaviour of the bound differs depending on the value of $\tilde{m}h$:
if $\tilde{m}h \leq 1$, then the restriction $\alpha \in (0,1)$ will mean that $\alpha\tilde{m}h \leq 1$ regardless of the value of $\alpha$ chosen.
In this case, the bound is exponentially increasing in $k$.
However, if $\tilde{m}h > 1$ (which roughly translates to the two processes being supercritical at every population size),
then we can take $\alpha \in \left( 1/\tilde{m}h, 1 \right)$, so that $\alpha\tilde{m}h > 1$. In that case,
\begin{align*}
	||\L_{\psdbp|Z_0 = z} - \L_{\cbp|\tilde{Z}_0 = z}||_{TV}
	& = \lim_{k\to\infty} ||\L_{(Z_1, \dots, Z_k)|Z_0 = z} - \L_{(\tilde{Z}_1, \dots, \tilde{Z}_k)|\tilde{Z}_0 = z}||_{TV} \\
	&\leq \lim_{k\to\infty} \frac{c_1 \big( 1 - (\alpha\tilde{m}h)^{-\frac{k}{2}} \big)}{\sqrt{z}}
		+ \frac{c_2 \big( 1 - (\alpha\tilde{m}h)^{-k+1} \big)}{z} \\
	&\leq \frac{c_1}{\sqrt{z}} + \frac{c_2}{z} \\
	&=  O\bigg( \frac{1}{\sqrt{z}} \bigg).
\end{align*}
Crucially, this shows that, whenever $\tilde{m}h > 1$, we can choose $\alpha$ so that the bound does not increase as $k$ gets larger;
in the limit as $z$ increases to infinity, the two processes $\psdbp$ and $\cbp$ are indistinguishable
over their entire, infinitely-long trajectories.

\subsection{Conditions to Match Moments of PSDBPs and DCBPs}\label{sec:matchingmomentsPSDBPandDCBP}

We have assumed throughout Section \ref{sec:distancebetweenmatchingPSDBPsandDCBPs} that it is possible to find PSDBPs and DCBPs with matching moments.
This is indeed a weaker condition than equivalence, as displayed by \thref{MatchingZIPandNB},
but as we will see is not a trivial requirement. \\

When the CBP in question is a DCBP, Equations \eqref{eqn:matchingmean} and \eqref{eqn:matchingvariance} simplify to the following:

\begin{proposition}\label{MatchingMomentsEquations}
	For a PSDBP and a DCBP to have matching mean and variance, we require that
	\begin{equation}\label{eqn:DCBPPSDBPMatchingMean}
		z \cdot m(z) = \tilde{m} \cdot \phi(z)
    \end{equation}
    and
    \begin{equation}\label{eqn:DCBPPSDBPMatchingVariance}
        z \cdot \sigma^2(z) = \tilde{\sigma}^2 \cdot \phi(z)
	\end{equation}
	for all $z\in\N_0$.
\end{proposition}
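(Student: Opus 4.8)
The plan is to prove the proposition by direct substitution, since closed-form expressions for the one-step conditional moments of both process classes are already available. First I would recall that, by \eqref{eqn:psdbpmean} and \eqref{eqn:psdbpvariance}, a PSDBP satisfies $\E(Z_n \mid Z_{n-1} = z) = z\,m(z)$ and $\V(Z_n \mid Z_{n-1} = z) = z\,\sigma^2(z)$ for every $n \in \N_1$, and that, by \eqref{eqn:dcbpmean} and \eqref{eqn:dcbpvariance}, a DCBP satisfies $\E(\tilde{Z}_n \mid \tilde{Z}_{n-1} = z) = \tilde{m}\,\phi(z)$ and $\V(\tilde{Z}_n \mid \tilde{Z}_{n-1} = z) = \tilde{\sigma}^2\,\phi(z)$.

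Next I would substitute these four identities into the defining conditions for matching moments, \eqref{eqn:matchingmean} and \eqref{eqn:matchingvariance}. The mean condition \eqref{eqn:matchingmean} then reads $z\,m(z) = \tilde{m}\,\phi(z)$, which is exactly \eqref{eqn:DCBPPSDBPMatchingMean}, and the variance condition \eqref{eqn:matchingvariance} becomes $z\,\sigma^2(z) = \tilde{\sigma}^2\,\phi(z)$, which is \eqref{eqn:DCBPPSDBPMatchingVariance}. Because the right-hand sides of the conditional-moment formulas do not depend on $n$, asking for equality of these moments for all $n \in \N_1$ reduces to the single pair of scalar equations above, required to hold at every attainable $z \in \N_0$; conversely, if the two equations hold then the conditional moments agree for all $n$. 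I would present this as a one-line verification.

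There is essentially no obstacle, so the only remark worth making concerns the boundary value $z = 0$: here the PSDBP contribution $z\,m(z)$ vanishes identically (the process is absorbed at $0$), so \eqref{eqn:DCBPPSDBPMatchingMean} at $z = 0$ forces $\tilde{m}\,\phi(0) = 0$, i.e.\ $\phi(0) = 0$ whenever $\tilde{m} > 0$. This is consistent with \thref{noimmigration}, which already rules out matching (indeed equivalence) against a DCBP that admits immigration at zero, so no extra case analysis is needed beyond noting this degeneracy.
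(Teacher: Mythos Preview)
Your proposal is correct and matches the paper's treatment: the paper does not give a separate proof but presents the proposition as an immediate specialisation of \eqref{eqn:matchingmean}--\eqref{eqn:matchingvariance} using the conditional-moment formulas \eqref{eqn:psdbpmean}--\eqref{eqn:psdbpvariance} and \eqref{eqn:dcbpmean}--\eqref{eqn:dcbpvariance}, which is exactly the substitution you carry out. Your remark on the $z=0$ case is a helpful addition but not required.
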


From Equations \eqref{eqn:DCBPPSDBPMatchingMean} and \eqref{eqn:DCBPPSDBPMatchingVariance} we can derive necessary and sufficient conditions to match both a PSDBP to a given DCBP, and a DCBP to a given PSDBP.

\begin{proposition}\label{WhenPSDBPMustMatchDCBP}
	A PSDBP $\psdbp$ can be found to match the mean and variance of a DCBP
	$\cbp$ if and only if $\cbp$ satisfies
	\[
		\frac{\tilde{\sigma}^2 \cdot \phi(z)}{z} \geq d(z) (1-d(z))
	\]
	for all $z \in \N_1$, where $d(z) := \frac{\tilde{m} \cdot \phi(z)}{z} - \left\lfloor \frac{\tilde{m} \cdot \phi(z)}{z} \right\rfloor$.
\end{proposition}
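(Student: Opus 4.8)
The plan is to strip the statement down, via \thref{MatchingMomentsEquations}, to a classical moment problem on $\N_0$ that can be handled separately at each population size. By \thref{MatchingMomentsEquations}, a PSDBP with offspring laws $\xi(z)$ matches the DCBP $\cbp$ precisely when $z\,m(z) = \mt\,\phi(z)$ and $z\,\sigma^2(z) = \st^2\,\phi(z)$ for every $z\in\N_1$; equivalently, when for each such $z$ the variable $\xi(z)$ is $\N_0$-valued with mean $\mu_z := \mt\phi(z)/z$ and variance $v_z := \st^2\phi(z)/z$ (the reverse implication being immediate, since the conditional moments of a PSDBP and of a DCBP are determined by their offspring moments and by $\phi$). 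Since $d(z)$ is by definition the fractional part of $\mu_z$, the proposition reduces to the assertion that, for each $z$, an $\N_0$-valued random variable with mean $\mu_z$ and variance $v_z$ exists if and only if $v_z \geq d(z)(1-d(z))$. I would note at the outset that the degenerate case $\mu_z = 0$ forces $\phi(z) = 0$ and hence $v_z = 0$, so the equivalence holds trivially there; the content is in $\mu_z > 0$.

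For necessity (the ``only if'') I would use the fact that an integer-valued random variable cannot have variance below the ``granularity'' of its mean. Let $X$ be integer-valued with $\E X = \mu$ and put $n = \lfloor\mu\rfloor$. Since $X-n$ and $X-n-1$ are consecutive integers, $(X-n)(X-n-1)\geq 0$ pointwise; taking expectations gives $\E[(X-n)^2] \geq \E[X-n] = \mu-n$, and rewriting the left-hand side as $\V(X) + (\mu-n)^2$ yields $\V(X) \geq (\mu-n)\bigl(1-(\mu-n)\bigr) = \{\mu\}(1-\{\mu\})$. Applied with $X = \xi(z)$ and $\mu = \mu_z$, this shows any matching PSDBP obeys $v_z \geq d(z)(1-d(z))$ for all $z\in\N_1$, which is the stated condition.

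For sufficiency (the ``if'') I would construct, for each $z$ with $\mu_z > 0$, a suitable $\xi(z)$ by mixing the minimum-variance law with a high-variance law of the \emph{same} mean, so that the mixing weight tunes the variance without disturbing the mean. Let $D_0$ be the $\N_0$-valued law of smallest variance with mean $\mu_z$: a point mass at $\mu_z$ if $\mu_z\in\N_1$, otherwise the two-atom law on $\{\lfloor\mu_z\rfloor,\lceil\mu_z\rceil\}$ putting mass $d(z)$ on the upper atom, of variance exactly $d(z)(1-d(z))$. For an integer $N\geq\mu_z$, let $D_N$ be the two-atom law on $\{0,N\}$ putting mass $\mu_z/N$ at $N$, which has mean $\mu_z$ and variance $\mu_z(N-\mu_z)\to\infty$ as $N\to\infty$. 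Because $D_0$ and $D_N$ have the same mean, the mixture $\alpha D_0 + (1-\alpha)D_N$ has mean $\mu_z$ and variance $\alpha\,d(z)(1-d(z)) + (1-\alpha)\,\mu_z(N-\mu_z)$; choosing $N$ large enough that $\mu_z(N-\mu_z)\geq v_z$ and then $\alpha\in[0,1]$ so that this quantity equals $v_z$ (possible exactly because $d(z)(1-d(z))\leq v_z\leq\mu_z(N-\mu_z)$) produces the required $\xi(z)$. Together with $\xi(0):=0$ and the correct initial size $z_0$, these offspring laws define a PSDBP matching $\cbp$, again by \thref{MatchingMomentsEquations}.

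I expect the main obstacle to be the sufficiency construction rather than necessity: necessity is the short pointwise inequality above, whereas for ``if'' one must produce, uniformly over $z$, a genuine $\N_0$-valued distribution realizing a prescribed (possibly non-integer) mean together with a prescribed variance that can lie far above the minimal one. The mixing device resolves the variance freedom cleanly, and the remaining care is purely bookkeeping: the integer-mean and zero-mean sub-cases, the requirement $N\geq\mu_z$ so that $\mu_z/N$ is a probability, and the fact that the variance-of-a-mixture identity collapses to a plain convex combination only because the two mixed laws share the mean $\mu_z$.
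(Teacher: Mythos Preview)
Your proposal is correct and follows the paper's overall strategy: reduce via \thref{MatchingMomentsEquations} to the question of when an $\N_0$-valued law with prescribed mean $\mu_z=\mt\phi(z)/z$ and variance $v_z=\st^2\phi(z)/z$ exists, and settle that by a minimum-variance lemma together with a mixing construction for sufficiency. The sufficiency side is essentially identical to the paper's \thref{MinimumDiscreteVariance}: both mix the minimum-variance two-atom law on $\{\lfloor\mu_z\rfloor,\lceil\mu_z\rceil\}$ with a high-variance two-atom law of the same mean (the paper uses a tailored $Y^*$, you use $\{0,N\}$; either works).

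Where you genuinely diverge is the necessity argument. The paper establishes $\V(X)\ge d(1-d)$ by first minimising over two-point laws, then decomposing every finite-support law as a mixture of two-point laws with the same mean, and finally handling infinite support by a limiting version of that decomposition --- a page or so of work. Your route is a single pointwise inequality: for integer-valued $X$ and $n=\lfloor\mu\rfloor$ one has $(X-n)(X-n-1)\ge 0$, and taking expectations gives $\V(X)+d^2\ge d$ directly. This is shorter and more elementary, and it treats all supports at once; the paper's decomposition approach, by contrast, yields the extra structural information that the minimiser is the two-atom law on $\{\lfloor\mu\rfloor,\lceil\mu\rceil\}$, which it then reuses in the sufficiency step. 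One small imprecision: $\mu_z=0$ does not force $\phi(z)=0$ (it could be that $\mt=0$), but your conclusion $v_z=0$ still holds since $\mt=0$ implies $\st^2=0$, so the edge case is harmless.
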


\begin{proposition}\label{WhenDCBPMustMatchPSDBP}
	A DCBP $\cbp$ can be found to match the mean and variance of a PSDBP $\psdbp$
	if and only if either the PSDBP has $\sigma^2(z) = 0$ for all attainable $z \in \N_1$, or if the following requirements on $\psdbp$ hold:
	\begin{enumerate}
		\item[(i)] there exists a constant $k \in \R_{>0}$ such that, for all attainable $z\in\N_1$, $m(z) = k \cdot \sigma^2(z)$,
		\item[(ii)] the set $H := \big\{ h \in \R_{>0} : m(z) \in h \cdot \N_0 \text{ for all attainable } z \in \N_1 \big\}$ is non-empty,
		\item[(iii)] there exists $h' \geq \sup_{h>1} \{ h \in H \}$ such that $\frac{h'}{k} \geq \big( h' - \lfloor h' \rfloor \big) \big( 1 - h' + \lfloor h' \rfloor \big)$.
	\end{enumerate}
\end{proposition}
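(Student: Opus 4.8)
The plan is to reduce everything to the pair of equations in \thref{MatchingMomentsEquations}: a DCBP with control function $\phi:\N_0\to\N_0$ and offspring law $\tilde\xi$ (mean $\tilde m$, variance $\tilde\sigma^2$) matches $\psdbp$ precisely when $z\,m(z)=\tilde m\,\phi(z)$ and $z\,\sigma^2(z)=\tilde\sigma^2\,\phi(z)$ hold for every attainable $z$. Since specifying a DCBP amounts to choosing $\phi$, the scalars $\tilde m,\tilde\sigma^2>0$, and an honest $\N_0$-valued random variable realising those two moments, existence of a matching DCBP is equivalent to solvability of this system subject to (a) $\phi(z)\in\N_0$ for every attainable $z$, and (b) existence of an $\N_0$-valued random variable with mean $\tilde m$ and variance $\tilde\sigma^2$. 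For (b) I would use the standard fact (also underlying \thref{WhenPSDBPMustMatchDCBP}) that an $\N_0$-valued random variable with mean $\mu$ has variance at least $(\mu-\lfloor\mu\rfloor)(1-\mu+\lfloor\mu\rfloor)$, and that every value at or above this bound is attained --- e.g.\ by mixing the two-point law on $\{\lfloor\mu\rfloor,\lceil\mu\rceil\}$ with a two-point law on $\{0,N\}$ of the same mean and letting $N$ and the mixing weight absorb the required variance.

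First I would isolate the degenerate branch: if $\sigma^2(z)=0$ for every attainable $z$, then $\xi(z)$ is a.s.\ the non-negative integer $m(z)$, the PSDBP is deterministic on its attainable states, and $\tilde\xi\equiv 1$ together with $\phi(z):=z\,m(z)$ yields a matching DCBP; this branch is also trivially consistent with the matching equations, so it can be set aside. Assume then that $\sigma^2(z^{\ast})>0$ for some attainable $z^{\ast}$. Then $z^{\ast}\sigma^2(z^{\ast})=\tilde\sigma^2\phi(z^{\ast})>0$ forces $\tilde\sigma^2>0$, and dividing the two matching equations gives $m(z)/\sigma^2(z)=\tilde m/\tilde\sigma^2$ at every attainable $z$ with $\sigma^2(z)>0$; combined with the observation (from the same equations) that $\sigma^2(z)=0$ forces $\phi(z)=0$ and hence $m(z)=0$, this is exactly requirement (i), with $k=\tilde m/\tilde\sigma^2$. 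Conversely, (i) makes the two matching equations proportional, so a single $\phi$ can serve both.

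Granting (i), the DCBP is pinned down once $\tilde m$ is chosen: necessarily $\tilde\sigma^2=\tilde m/k$ and $\phi(z)=z\,m(z)/\tilde m$. It remains to characterise the admissible $\tilde m$. The requirement $\phi(z)=z\,m(z)/\tilde m\in\N_0$ for all attainable $z$ is a real-divisibility constraint that is met whenever $\tilde m$ divides every $m(z)$, i.e.\ $\tilde m\in H$; non-emptiness of $H$ (requirement (ii)) therefore guarantees an admissible scale exists, and one checks it is enough to take $\tilde m$ as large as possible, at the level $h'\geq\sup_{h>1}\{h\in H\}$ appearing in requirement (iii). Feeding this choice into the minimum-variance inequality of (b), $\tilde\sigma^2=\tilde m/k\geq(\tilde m-\lfloor\tilde m\rfloor)(1-\tilde m+\lfloor\tilde m\rfloor)$, produces requirement (iii) itself; for sufficiency I would run these implications in reverse, constructing $\tilde\xi$ explicitly via the mixture above and confirming the match directly through \thref{MatchingMomentsEquations}.

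I expect the fiddly heart of the proof to be this last characterisation: pinning down precisely which real $\tilde m$ make $z\,m(z)/\tilde m$ a non-negative integer simultaneously for every attainable $z$, relating that feasible set to $H$, and justifying that it is enough to test the single extremal scale against the non-monotone minimum-variance function $\mu\mapsto(\mu-\lfloor\mu\rfloor)(1-\mu+\lfloor\mu\rfloor)$. The case split according to whether $\tilde m$ exceeds $1$ --- equivalently, whether the constructed offspring law comes out supercritical, echoing the dichotomy in \thref{ClosedFormKStepTVDBound} --- is what makes the analysis terminate. Everything else is routine manipulation layered on \thref{MatchingMomentsEquations} and the degenerate-case observations.
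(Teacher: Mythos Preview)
Your proposal is correct and follows essentially the same route as the paper: reduce to the two equations of \thref{MatchingMomentsEquations}, dispose of the degenerate $\sigma^2\equiv 0$ branch, obtain (i) by dividing the two equations, read (ii) off the integrality constraint $\phi(z)=z\,m(z)/\tilde m\in\N_0$, and derive (iii) from the minimum-variance lemma underlying \thref{WhenPSDBPMustMatchDCBP}. One small correction: your closing remark that the split at $\tilde m=1$ ``echoes the dichotomy in \thref{ClosedFormKStepTVDBound}'' is a red herring --- the relevance of $h\le 1$ here is purely that the minimum-variance function simplifies to $h(1-h)$ on $[0,1]$, turning the inequality $h/k\ge (h-\lfloor h\rfloor)(1-h+\lfloor h\rfloor)$ into the monotone condition $1/k\ge 1-h$, and has nothing to do with criticality.
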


We can use these propositions to determine whether or not it is possible to match moments to a given process.

\begin{figure}[!htb]
\centering
\includegraphics[width=12cm]{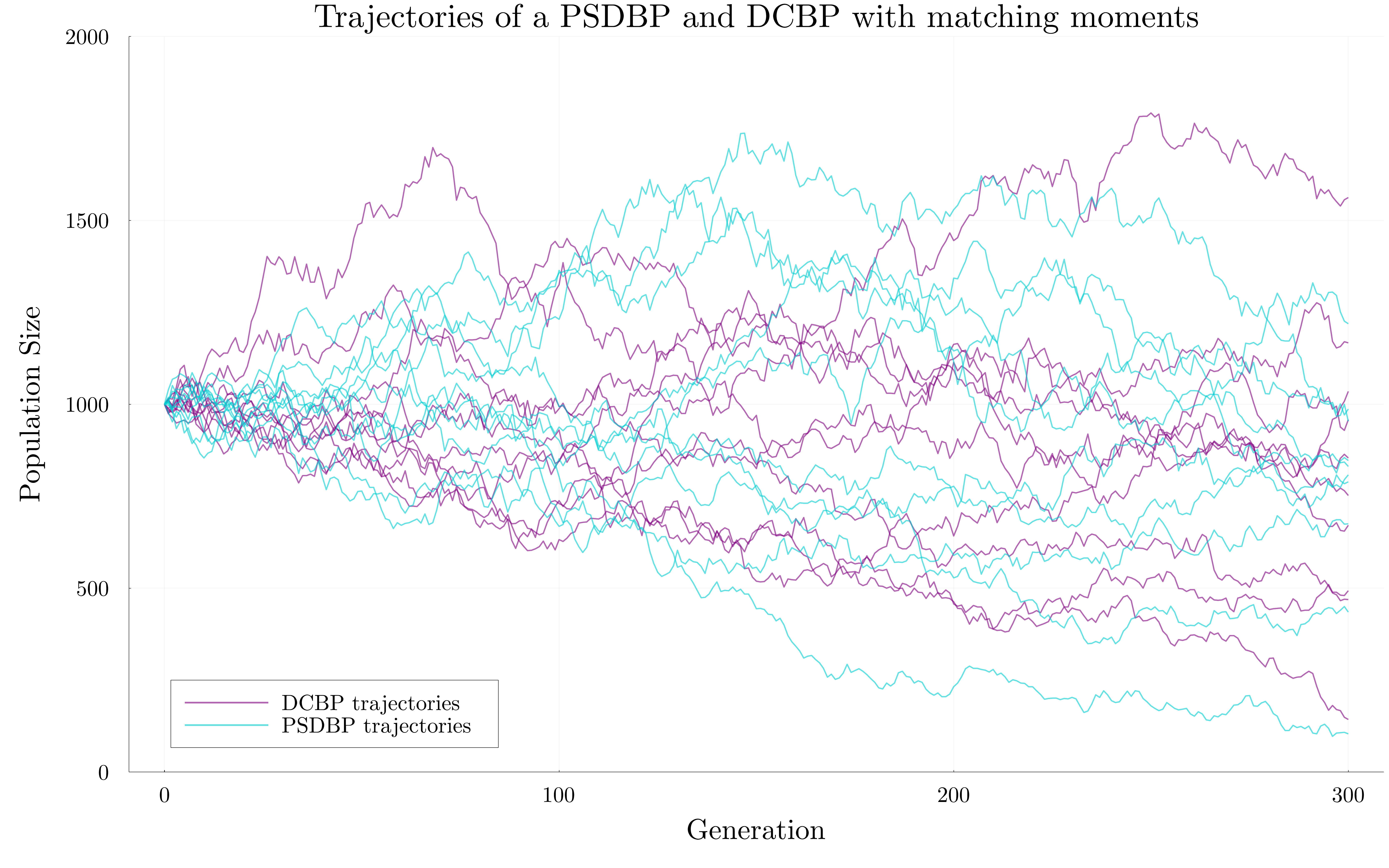}
\caption{Ten simulated trajectories of each a PSDBP and a DCBP with matching moments and $Z_0 = \tilde{Z}_0 = 1000$.
The DCBP has $\phi(z) = \max\{z-1, 0\}$ and $\tilde{\xi} \sim \text{Bin}(2, 1/2)$,
while the PSDBP has $\xi(0) = \xi(1) = 0$, and for $z \geq 2$, $\P(\xi(z) = 0) = \frac{z^2+z+2}{4z^2}$,
$\P(\xi(z) = 1) = \frac{z^2+z-2}{2z^2}$, and $\P(\xi(z) = 2) = \frac{z^2-3z+2}{4z^2}$.}
\label{fig:MatchingMoments}
\end{figure}

\begin{exmp}\label{DCBPWithNoMatchingPSDBP}
	Consider a DCBP $\cbp$ with control function $\phi(z) = \max\{z-1,0\}$, for $z\in\N_0$, and $\xt \sim \ber(1/2)$ or $\xt \sim \bin(2, 1/2)$.
    In both of these offspring distributions, we can use \thref{DCBPEquivalenceConditions} to show that $\cbp$ has no equivalent PSDBP.
    \begin{enumerate}
        \item[(i)] If $\xt \sim \ber(1/2)$,
        then $\tilde{m} = 1/2$ and $\st^2 = 1/4$, so we can calculate, for any $z\in\N_1$,
    	$d(z) =  \frac{z - 1}{2z}$, $d(z)(1 - d(z)) = \frac{(z-1)(z+1)}{4z^2}$,
    	and $\frac{\st^2 \cdot \phi(z)}{z} = \frac{z-1}{4z}$.
    	From \thref{WhenPSDBPMustMatchDCBP}, we know that $\cbp$ can be matched by a PSDBP if and only if $\frac{z-1}{4z} \geq \frac{(z-1)(z+1)}{4z^2}$.
        For $z \geq 2$, this simplifies to $z \geq z+1$, which never holds. Hence there is no PSDBP with the same mean and variance as $\cbp$. \\

        \item[(ii)] If $\xt \sim \bin(2, 1/2)$,
        $\cbp$ has $\tilde{m} = 1$ and $\tilde{\sigma}^2 = 1/2$,
        so we can calculate $d(z) = \frac{z - 1}{z}$,
        $d(z)(1 - d(z)) = \frac{z - 1}{z^2}$, and $\frac{\tilde{\sigma}^2 \cdot \phi(z)}{z} = \frac{z-1}{2z}$.
        Appealing to \thref{WhenPSDBPMustMatchDCBP}, we can find a PSDBP to match $\cbp$ if and only if $\frac{z-1}{2z} \geq \frac{z-1}{z^2}$.
        When $z = 1$ this is trivially true, and for $z \geq 2$, it simplifies to $1/2 \geq 1/z$, which is again true.
        Hence in this case, it \textit{is} possible to find a PSDBP with matching moments to $\cbp$. \\

        The explicit form of this matching PSDBP is not unique. One possible solution is to take the PSDBP with $\xi(0) = \xi(1) = 0$,
        and, for $z \geq 2$, the three-point distribution with $\P(\xi(z) = 0) = \frac{z^2+z+2}{4z^2}$, $\P(\xi(z) = 1) = \frac{z^2+z-2}{2z^2}$,
        and $\P(\xi(z) = 2) = \frac{z^2-3z+2}{4z^2}$.
        We plot trajectories from the PSDBP with this offspring distribution against trajectories from $\cbp$
        in Figure \ref{fig:MatchingMoments}.
    \end{enumerate}

    We can consider these findings in the context of \thref{MinimumDiscreteVariance}:
    the $\ber(1/2)$ distribution achieves the minimum possible variance of any distribution
    taking values in $\N_0$ with mean $1/2$.
    Since $\phi(z) < z$ for all $z \geq 2$, by \thref{MatchingMomentsEquations}, a PSDBP with matching moments would require an offspring distribution with lower variance than $\tilde{\xi}$.
    On the other hand, the $\bin(2, 1/2)$ distribution has a higher variance than the minimum possible for a distribution on $\N_0$ with mean 1.
    There is enough `excess' left over, even after adjusting by $(z-1)/z$, for a PSDBP to match.
    \null\hfill$\blacklozenge$
\end{exmp}

\subsection{Estimation of the TVD between a PSDBP and CBP}

\thref{KStepTVDBound} provides an analytic bound on the TVD for a range of DCBPs and PSDBPs with matching moments, but this bound is not tight,
and Section \ref{sec:matchingmomentsPSDBPandDCBP} demonstrated that it is not trivial to match moments when a CBP is constrained to have a deterministic control function.

For any PSDBP-CBP pair, including those that do not meet the requirements of \thref{KStepTVDBound},
we can use an importance sampling approach to estimate the TVD between their distributions.

\begin{lemma}\label{TVDEstimator}
    Let $X$ and $Y$ be two random variables defined on a countable space $\mathcal{X}$.
    Let $\mathbf{x}_N := (x_1, \dots, x_N)$ be a sample of $N$ independent observations drawn from the distribution of $X$, $\L_X$. Then
    \[
        \hat{\theta}_N := \frac{1}{2N} \sum_{i=1}^N \frac{| \L_X(x_i) - \L_Y(x_i) |}{\L_X(x_i)}
    \]
    is an unbiased, consistent estimator for $|| \L_X - \L_Y ||_{TV}$.
\end{lemma}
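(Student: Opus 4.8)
The plan is to verify the two claimed properties of $\hat\theta_N$ — unbiasedness and consistency — directly from the countable-space formula \eqref{eqn:TVDEquivalentSum} for the TVD, using the fact that $\mathbf{x}_N$ is an i.i.d.\ sample from $\L_X$. Write $g(x) := \frac{|\L_X(x) - \L_Y(x)|}{\L_X(x)}$ for $x$ in the support of $X$ (i.e.\ where $\L_X(x) > 0$), and $g(x) := 0$ otherwise; then $\hat\theta_N = \frac{1}{2N}\sum_{i=1}^N g(x_i)$ is an empirical mean of the i.i.d.\ random variables $g(x_1), \dots, g(x_N)$.

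For unbiasedness, I would compute $\E[g(X)]$ by summing over the support of $X$:
\[
    \E[g(X)] = \sum_{x \,:\, \L_X(x) > 0} \L_X(x) \cdot \frac{|\L_X(x) - \L_Y(x)|}{\L_X(x)}
             = \sum_{x \,:\, \L_X(x) > 0} |\L_X(x) - \L_Y(x)|.
\]
The one point needing care is the contribution of states $x$ with $\L_X(x) = 0$ but $\L_Y(x) > 0$: these are simply absent from the sample (almost surely) and are excluded from the sum defining $\hat\theta_N$, yet they contribute $|\L_X(x) - \L_Y(x)| = \L_Y(x)$ to the full TVD sum \eqref{eqn:TVDEquivalentSum}. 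I would note that $\sum_{x:\L_X(x)=0}\L_Y(x) = \L_Y(\{x : \L_X(x) = 0\})$; when this is positive the estimator is in fact biased \emph{low}, so strictly speaking the unbiasedness claim requires the (mild, and implicitly intended) hypothesis that $\L_Y$ is absolutely continuous with respect to $\L_X$, i.e.\ $\L_X(x) = 0 \Rightarrow \L_Y(x) = 0$. Under that hypothesis the displayed sum equals $2\,\|\L_X - \L_Y\|_{TV}$ by \eqref{eqn:TVDEquivalentSum}, hence $\E[\hat\theta_N] = \frac{1}{2}\E[g(X)] = \|\L_X - \L_Y\|_{TV}$ for every $N$. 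I would either add this absolute-continuity hypothesis to the statement or remark that without it the estimator converges to $\|\L_X - \L_Y\|_{TV} - \L_Y(\{x:\L_X(x)=0\})$.

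For consistency, I would invoke the strong law of large numbers for the i.i.d.\ sequence $g(x_i)$. Since $g \geq 0$, the SLLN applies regardless of integrability: if $\E[g(X)] < \infty$ then $\hat\theta_N \to \frac{1}{2}\E[g(X)] = \|\L_X - \L_Y\|_{TV}$ almost surely (hence in probability), and since the TVD is always at most $1$ and $\E[g(X)] = 2\|\L_X-\L_Y\|_{TV}$ under the absolute-continuity hypothesis, integrability is automatic. I would spell this out: $\E[g(X)] = 2\|\L_X-\L_Y\|_{TV} \le 2 < \infty$, so Kolmogorov's SLLN gives a.s.\ convergence of the sample mean to its expectation, establishing both consistency (a.s., a fortiori in probability) and completing the argument.

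The main obstacle is not a computational one but a bookkeeping subtlety: correctly handling the set of states invisible to importance sampling from $\L_X$ (those with $\L_X(x) = 0$), and recognising that the stated unbiasedness holds exactly only when $\L_Y \ll \L_X$. Once that hypothesis is made explicit, both parts are short: unbiasedness is the telescoping of the importance weights against $\L_X(x)$, and consistency is an immediate appeal to the SLLN together with the finiteness $\E[g(X)] = 2\|\L_X - \L_Y\|_{TV} \le 2$.
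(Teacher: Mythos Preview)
Your approach is essentially identical to the paper's: rewrite the TVD via \eqref{eqn:TVDEquivalentSum} as $\tfrac{1}{2}\E_X\big[|\L_X(X)-\L_Y(X)|/\L_X(X)\big]$, read off unbiasedness, and invoke the law of large numbers for consistency. You are in fact more careful than the paper, which silently divides and multiplies by $\P(X=n)$ over all of $\mathcal{X}$ and thereby implicitly assumes $\L_Y\ll\L_X$; your identification of this hidden hypothesis is correct and worth keeping.
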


Hence, for $k \in \N_1$, if we can simulate a large number of trajectories from the $k$-step distribution of a CBP or PSDBP,
\thref{TVDEstimator} provides us with a method to estimate the TVD between the two processes.
Algorithm \ref{alg:TVDAlgorithm} uses this method to produce an estimate for the TVD between a CBP $\cbp$ and a PSDBP $\psdbp$.

\begin{algorithm}
\caption{An estimator for the TVD}
\label{alg:TVDAlgorithm}
\begin{algorithmic}
    \State $S_1 \gets 0$
    \For{ $i \in [\text{numTrials}]$ }
        \State $(z_0, \dots, z_k) \gets \textsc{samplePath}(\{Z_n\}_{n\in[k]})$ \Comment{$k$-step path sampled from the PSDBP}

        \State $\mathcal{L}_Z \gets \textsc{likelihood}\big((z_0, \dots, z_k), \{Z_n\}_{n\in[k]}\big)$
        \State $\mathcal{L}_{\zt} \gets \textsc{likelihood}\big((z_0, \dots, z_k), \{\zt_n\}_{n\in[k]}\big)$

        \State $x \gets | \mathcal{L}_Z - \mathcal{L}_{\zt} | \mathbin{/} 2 \mathcal{L}_Z$

        \State $S_1 \gets S_1 + x$
    \EndFor
    \State \Return $S_1 \mathbin{/} \text{numTrials}$
\end{algorithmic}
\end{algorithm}

While in Algorithm \ref{alg:TVDAlgorithm} we chose to sample from the PSDBP,
the symmetry of the TVD means that  a similar result holds if we instead sampled from the CBP. \\

\thref{KStepTVDBound} established a TVD bound that decreases as $z^{-1/2}$.
We use Algorithm \ref{alg:TVDAlgorithm} to demonstrate similar decaying behaviour
in a PSDBP-CBP pair beyond the scope of \thref{KStepTVDBound}:
two processes which grow logistically towards and fluctuate around a carrying capacity.

\begin{figure}[!b]
	\centering
	\includegraphics[width=13cm]{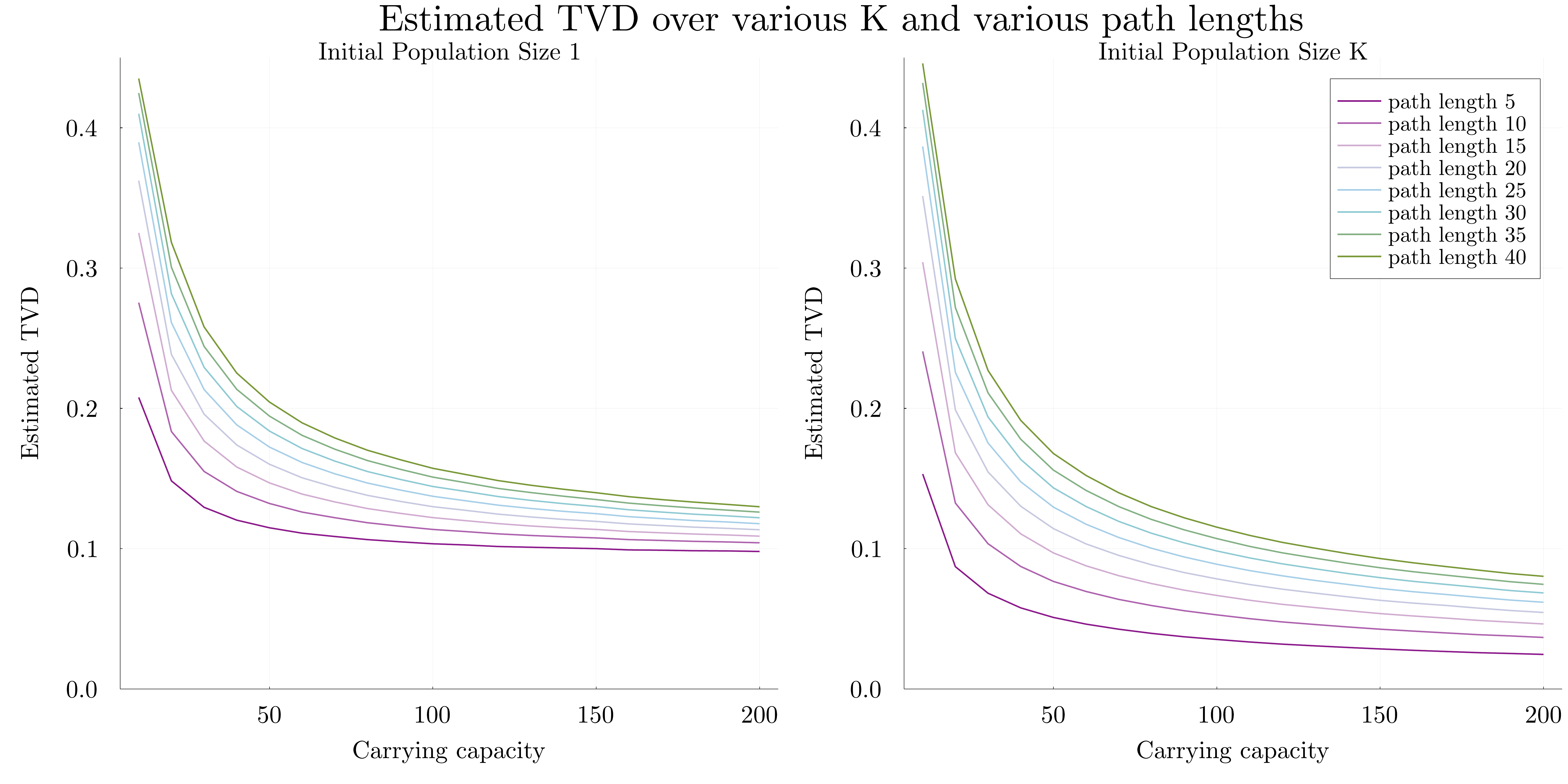}
	\caption{Estimated TVD between the $k$-step distributions of the processes $\{\tilde{Z}_n\}$ and $\{Z_n\}$ of Example 4.15, 
    for $K \in [200]$ and various path lengths,
    taking the initial population size of both processes as 1 (left) and K (right).
	Each estimated value is the average of 1,000,000 iterations.}
    \label{fig:PoissonKPathLength1}
	\vspace{0.7cm}
\end{figure}

\begin{exmp}\label{CarryingCapacityTVDBound}
    For $\lambda \geq 2$, $M, K \in \N_1$, $M < K$, consider the CBP $\cbp$ with
    \[
        \pt(z) \sim \bin\left((z + M)\1_{\{z>0\}},\; \frac{2K^2}{\lambda(K+M) (z+K)}\right) \quad \text{and} \quad
    	\xt \sim \poi\left( \lambda \right),
    \]
    and the PSDBP $\psdbp$ with
    \[
    	\xi(z) \sim \nb\left(
    	   \frac{2K^2(z+M)}{z(\lambda(K+M)z + \lambda KM +(\lambda-2)K^2)},\;
    	   \frac{(z+K)(K+M)}{(1+\lambda)(K+M)(K+z)-2K^2}
        \right).
    \]
    One can check that both processes have matching moments, and a carrying capacity at $K$: we have that $\E(Z_n | Z_{n-1} = K) = \E(\zt_n | \zt_{n-1} = K) = K$,
    $\E(Z_n | Z_{n-1} = x) = \E(\zt_n | \zt_{n-1} = x) > x$ for $x < K$,
    and $\E(Z_n | Z_{n-1} = y) = \E(\zt_n | \zt_{n-1} = y) < y$ for $y > K$. \\
    
    Taking $\lambda = 3$ and $M = 2$, we produce Figure \ref{fig:PoissonKPathLength1}.
    As may be expected, we see that the TVD between the two processes increases in the path length, and decreases in the carrying capacity.
    The TVD becomes quite small at even moderate values of $K$, especially in the case where $z_0 = K$,
    suggesting that one would be indifferent to the choice of either $\cbp$ or $\psdbp$ to model a population.

    It is also noteworthy that while the processes with $z_0 = K$ have a TVD that appears to approach zero,
    the processes with $z_0 = 1$ do not --- the PSDBP and CBP are at their most different at small population sizes,
    so if the processes begin from a small population, some `accumulated TVD' will necessarily build up.
    \null\hfill$\blacklozenge$
\end{exmp}

\section{Conclusion}\label{sec:conclusion}

Population-size-dependent branching processes and controlled branching processes are both well-used in the field of population biology,
providing a simple yet flexible way to model populations amid phenomena such as resource scarcity and immigration.
While PSDBPs lack the ability to incorporate external random factors, CBPs can via the control function.

However, we have shown that many CBP models considered in the literature, including those with Poisson, negative binomial, and binomial control functions,
can be represented as PSDBPs, either exactly or approximately.
Therefore, when considering models with these common control functions, no additional benefit is gained compared to using a PSDBP.

In the presence of environmental stochasticity, which class of models to focus on and how to estimate their parameters
become important questions. Determining the consistent estimation of parameters for these models is an ongoing area of research.

\section{Proofs}\label{sec:proofs}

\subsection{Proofs from Section \ref{sec:EquivalentBranchingProcesses}}

\begin{proof}[\textbf{Proof of \protect{\thref{PoissonOffspringNoEquivalence}}.}]
	Firstly, if $\psi$ is such that $\psi(0) > 0$, the CBP allows immigration, and we know trivially that there will be no equivalent PSDBP.
	Assume, then, that $\psi(0) = 0$.
	We also assume that $\psi(z) \geq 1$ for all $z \geq 1$ to exclude trivial equivalences: otherwise, with $z_0 = 1$ and $\psi(1) = 0$,
	$\cbp$ would be equivalent to the PSDBP with $\xi(z) = 0$ a.s.\ for all $z\in\N_0$.
	
	In addition, under the assumption that $\pt(z)$ is not $\zt$-divisible,
	there exists an attainable $z^*\in\N_0$ such that $\pt(z^*)$ is not $z^*$-divisible,
	i.e.\ $\psi(z^*)$ is not a multiple of $z^*$. In this case, using the composition property of probability generating functions (PGFs) for a random sum,
	the PGF of $(\zt_n | \zt_{n-1} = z^*) = \sum_{i=1}^{\pt(z^*)} \xt_i$ is given by
    \[
        G(t) = \left( p(z^*) + (1-p(z^*))e^{\lambda(t-1)} \right)^{\psi(z^*)}.
    \]
	To show that $(\zt_n | \zt_{n-1} = z^*)$ is not $z^*$-divisible, we show that $\sqrt[z^*]{G(t)}$ is not a valid PGF, and make use of the next lemma and proposition.
\end{proof}

\begin{lemma}\label{DiscreteDistributionsDivideIntoDiscreteDistributions}
    Let $X$ be a discrete random variable supported on the non-negative integers.
	Then, for any $z\in\N_1 \setminus \{1\}$ such that $X$ is $z$-divisible, with $X \stackrel{d}{=} \sum_{i=1}^n X_i^{(z)}$, each $X_i^{(z)}$ is also a discrete random variable supported on the non-negative integers.
\end{lemma}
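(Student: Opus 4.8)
The plan is to fix a representative $Y$ of the common distribution of the i.i.d.\ summands $\{X_i^{(z)}\}_{i\in[z]}$, write $\mu := \L_Y$, and note that $z$-divisibility means precisely $\L_X = \mu^{*z}$, the $z$-fold convolution. Since the $X_i^{(z)}$ share a law, it suffices to prove that $\mu$ is carried by $\N_0$. I would establish this in three stages matching the two assertions ``discrete'' and ``supported on the non-negative integers'': the support of $\mu$ is bounded below by $0$; $\mu$ is purely atomic; and $\supp(\mu)\subseteq\N_0$.

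First I would pin down the location of the support. Writing $m:=\inf\supp(\mu)$ and using that the support of a convolution is the closure of the Minkowski sum of the supports, one gets $\inf\supp(\L_X)=z\,m$ whenever $m>-\infty$, whereas $m=-\infty$ would force $\inf\supp(\L_X)=-\infty$. As $X$ takes values in $\N_0$ we have $\inf\supp(\L_X)\ge 0$, so $m\in[0,\infty)$ and $\supp(\mu)\subseteq[0,\infty)$; in particular $Y\ge 0$ a.s. To see that $\mu$ is discrete I would take its Lebesgue decomposition $\mu=\mu_a+\mu_c$ into atomic and non-atomic parts and expand
\[
  \mu^{*z}=\sum_{j=0}^{z}\binom{z}{j}\,\mu_a^{*(z-j)}*\mu_c^{*j}.
\]
Every term with $j\ge 1$ is a convolution against the non-atomic measure $\mu_c$, hence is itself non-atomic, so the non-atomic part of $\mu^{*z}=\L_X$ carries total mass $1-\lVert\mu_a\rVert^{\,z}$. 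Because $\L_X$ lives on $\N_0$ it is purely atomic, so this mass is $0$, forcing $\lVert\mu_a\rVert=1$, i.e.\ $\mu_c=0$ and $Y$ discrete.

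The final stage is integrality. With $Y$ now discrete and non-negative I would order $\supp(Y)=\{m=a_0<a_1<a_2<\cdots\}$, set $q_i:=\P(Y=a_i)>0$, and compare the two generating functions via $\E[s^X]=\E[s^Y]^z$ for $s\in(0,1]$. All coefficients of $\E[s^Y]^z$ are sums of products of the $q_i$ and hence strictly positive, so every achievable exponent is an element of $\supp(X)\subseteq\N_0$. In particular the exponent $z\,m$ (coefficient $q_0^{\,z}$) and each exponent $(z-1)m+a_i$ (coefficient at least $z\,q_0^{\,z-1}q_i>0$) are non-negative integers; subtracting shows $z\,m\in\N_0$ and $a_i-m\in\Z$ for every $i$. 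Thus $\supp(Y)\subseteq m+\N_0$, and the claim $\supp(Y)\subseteq\N_0$ follows once the base atom $m$ is shown to be a non-negative integer.

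I expect this last identification to be the main obstacle: the convolution and generating-function bookkeeping pin the atoms of $Y$ to the single progression $m+\N_0$ with $z\,m\in\N_0$, but upgrading ``$z\,m$ is a non-negative integer'' to ``$m$ is a non-negative integer'' is the delicate point, and it is precisely here that the behaviour of $\L_X$ at the bottom of its support must be exploited. I would therefore concentrate the argument on the lowest-order coefficient $\P(X=z\,m)=q_0^{\,z}$ and the structure it imposes on the minimal atom $a_0=m$, treating the control of $m$ as the crux on which the whole lemma turns.
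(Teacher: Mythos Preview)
Your proposal is more careful and more self-contained than the paper's argument: where the paper simply invokes a decomposition theorem of Linnik to get discreteness and then runs a one-line contradiction to get non-negativity, you rebuild both facts from scratch via the Lebesgue decomposition of $\mu$ and the Minkowski-sum behaviour of supports, and you go further by pinning down the lattice structure $\supp(Y)\subseteq m+\N_0$ with $zm\in\N_0$. That extra analysis is exactly what exposes the real issue.

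You are right that the passage from ``$zm\in\N_0$'' to ``$m\in\N_0$'' is the crux, and in fact this step cannot be completed: the lemma as stated is false. Take $z=2$ and $X_1^{(2)}=X_2^{(2)}=\tfrac12$ a.s.; then $X=1$ a.s.\ is supported on $\N_0$ and is $2$-divisible, yet the summands live on $\{\tfrac12\}$. More generally, if $Y=\tfrac12+B$ with $B\sim\ber(p)$, then $Y_1+Y_2$ is supported on $\{1,2,3\}\subset\N_0$ while $\supp(Y)=\{\tfrac12,\tfrac32\}$. Your own bookkeeping already shows why: nothing in $\L_X$ distinguishes the base atom $m$ from $m+\tfrac{j}{z}$ for $j\in\{0,\dots,z-1\}$ once $zm\in\N_0$. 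The paper's proof shares this gap --- it establishes ``discrete and non-negative'' and then asserts ``supported on the non-negative integers'' without justification.

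For the intended application (the proof of \thref{PoissonOffspringNoEquivalence}) the full lemma is not needed. There one only has to rule out a decomposition into $\N_0$-valued summands, since a PSDBP offspring variable is by definition $\N_0$-valued; any such summand has a genuine PGF, which must then equal $\sqrt[z^*]{G(t)}$, and the Pringsheim argument finishes the job directly. So the right fix is not to repair the lemma but to bypass it.
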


\begin{proof}
	Suppose that $X$ is a discrete random variable, and suppose that $X$ is $z$-divisible, for some $z\in\N_1 \setminus \{1\}$.
	Then $X \stackrel{d}{=} \sum_{i=1}^z X_i^{(z)}$, for $X_1^{(z)}, \dots, X_z^{(z)}$ i.i.d..
	
	It is a classical result that discrete distributions can only be decomposed into discrete distributions
	(see Corollary 4 to Theorem 3.2.1 in \cite{linnik77}), so $X_1^{(z)}$ must have a discrete distribution.

	Assume that $\P\big( X_1^{(z)} < 0 \big) > 0$.
	Then
	\[
		\P(X < 0) = \P\left( \sum_{i=1}^z X_i^{(z)} < 0 \right) \geq \prod_{i=1}^z \P\left( X_i^{(z)} < 0 \right) > 0,
	\]
	but by assumption $X$ has a non-negative support.
	This is a contradiction, so $X_1^{(z)}$ must be supported on the non-negative integers.
    \qed
\end{proof}

\begin{proposition}[Pringsheim's Theorem]\label{PringsheimsTheorem}
	Let $G(t) = \sum_{n=0}^{\infty} a_n t^n$ be a power series with a radius of convergence $R > 0$.
	Then if $a_n \geq 0$ for all $n \in \N_0$, the singularity of $G(t)$ closest to the origin is at the point $t=R$ on the real line.
\end{proposition}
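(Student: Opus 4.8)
The plan is to argue by contradiction, using the classical device of re-expanding $G$ about a real point lying just to the left of $R$. If $R = \infty$ then $G$ is entire and there is nothing to prove, so assume $R < \infty$. Suppose, for contradiction, that $G$ were analytic at $t = R$. Since $G$ is analytic on the open disk $\{|t| < R\}$ by definition of the radius of convergence, it would then be analytic on the open set $U := \{|t| < R\} \cup \{|t - R| < \epsilon\}$ for some $\epsilon > 0$, which we may take with $\epsilon < R$; note $U$ contains the segment $[0, R]$.

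First I would fix a real expansion point $r \in (0, R)$ with $R - r < \epsilon/3$, and verify the elementary geometric fact that the disk $B(r, \rho)$ with $\rho := (R - r) + \epsilon/3$ lies inside $U$: if $|t - r| < \rho$ and $|t| \ge R$, then $|t - R| \le |t - r| + (R - r) < \rho + (R - r) < \epsilon$, so $t \in U$; and if $|t| < R$ then trivially $t \in U$. Hence $G$ is holomorphic on $B(r, \rho)$, so its Taylor series at $r$, namely $\sum_{k \ge 0} \frac{G^{(k)}(r)}{k!}(t - r)^k$, converges throughout $B(r, \rho)$. Because $\rho > R - r$, I can choose a real $s$ with $R - r < s < \rho$, so that $t = r + s$ is a real point of $B(r, \rho)$ with $r + s > R$ at which this Taylor series converges.

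Next I would bring in the sign hypothesis. Term-by-term differentiation of $\sum a_n t^n$ inside its disk of convergence gives $G^{(k)}(r) = \sum_{n \ge k} \frac{n!}{(n-k)!}\, a_n r^{n-k}$, a convergent series of non-negative terms (since $r > 0$ and $a_n \ge 0$), so $G^{(k)}(r) \ge 0$. Substituting $t = r + s$ into the convergent Taylor expansion and expanding each coefficient produces a double series all of whose terms are non-negative, so Tonelli's theorem justifies reversing the order of summation:
\[
	G(r+s) = \sum_{k \ge 0} \frac{s^k}{k!} \sum_{n \ge k} \frac{n!}{(n-k)!}\, a_n r^{n-k}
	= \sum_{n \ge 0} a_n \sum_{k=0}^{n} \binom{n}{k} s^k r^{n-k}
	= \sum_{n \ge 0} a_n (r+s)^n .
\]
Thus $\sum_n a_n (r+s)^n$ converges with $r + s > R$, whence $a_n (r+s)^n \to 0$ and $\limsup_n a_n^{1/n} \le 1/(r+s) < 1/R$, contradicting the Cauchy--Hadamard identity $\limsup_n a_n^{1/n} = 1/R$. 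Therefore $G$ is not analytic at $t = R$, i.e.\ $t = R$ is a singular point; and since $G$ is analytic on the whole open disk $\{|t| < R\}$, no point with $|t| < R$ is singular, so the singularity of $G$ nearest the origin lies on the circle $|t| = R$, with $t = R$ one such point, as claimed.

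The proof is essentially routine; the one step that genuinely carries weight is the interchange of the two summations, which is exactly where the hypothesis $a_n \ge 0$ is indispensable (it fails for general coefficients), and which is what forces the blow-up to occur on the positive real axis. The only other thing requiring a little care is the geometry of the shifted disk, so that $B(r,\rho)$ pokes past $|t|=R$ while staying within the region of analyticity $U$.
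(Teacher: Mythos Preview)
Your argument is correct and is precisely the classical proof of Pringsheim's theorem: re-expand about a real point $r$ just left of $R$, use non-negativity to justify the Tonelli interchange, and derive convergence of $\sum a_n t^n$ at a real $t>R$, contradicting the definition of $R$. The geometry of the shifted disk is handled cleanly, and the key step --- the double-series rearrangement --- is exactly where $a_n\ge 0$ enters.

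The paper itself does not supply a proof: it simply states the result and refers the reader to Titchmarsh \cite[Theorem~7.2]{titchmarsh39} and Flajolet--Sedgewick \cite[Theorem~IV.6]{flajolet09}. Your write-up is essentially the argument found in those references, so there is no discrepancy to report.
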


Pringsheim's Theorem is a well-known result in complex analysis,
and proofs can be found in \cite[Theorem 7.2]{titchmarsh39} and \cite[Theorem IV.6]{flajolet09}. \\

\begin{proof}[Proof of \thref{PoissonOffspringNoEquivalence}, continued.]
    Suppose that $(\zt_n | \zt_{n-1} = z^*)$ is $z^*$-divisible, with $(\zt_n | \zt_{n-1} = z^*) \stackrel{d}{=} \sum_{i=1}^{z^*} \zeta_i^{(z^*)}$.
	Since the distribution of $(\zt_n | \zt_{n-1} = z^*)$ is discrete and supported on the non-negative integers,
	then from \thref{DiscreteDistributionsDivideIntoDiscreteDistributions} we know that the distribution
	of the $\zeta_1^{(z^*)}$ must also be discrete and supported on the non-negative integers.
    Accordingly, $\zeta_1^{(z^*)}$ must have a PGF, and this PGF must correspond to $\sqrt[z^*]{G(t)}$.
	Hence, if we can show that $\sqrt[z^*]{G(t)}$ is not a valid PGF, we are done.
	
	Definitionally, PGFs are power series with non-negative coefficients,
	so \thref{PringsheimsTheorem} provides a test which, if not met, allows us to reject candidate PGFs as invalid.
	That is to say, by \thref{PringsheimsTheorem}, if  $\sqrt[z^*]{G(t)}$ has a singularity \textit{off} the positive real half-line that is closer to the origin
	than the closest singularity \textit{on} the positive real half-line, then $\sqrt[z^*]{G(t)}$ is not a valid PGF. We have that
	\[
		\sqrt[z^*]{G(t)} = \left( p(z^*) + (1-p(z^*))e^{\lambda(t-1)} \right)^{\psi(z^*) /z^*},
	\]
	and since by assumption $\psi(z^*)$ is not a multiple of $z^*$, $\psi(z^*) / z^*$ is not an integer.
 
    We define the function $f(t) := p(z^*) + (1-p(z^*))e^{\lambda(t-1)}$, such that $\sqrt[z^*]{G(t)} = f(t)^{\psi(z^*) /z^*}$.
    Since $f(t)$ is an entire function, we observe that the only singularities of $\sqrt[z^*]{G(t)}$ are the branch points that occur whenever $f(t) = 0$.
	
	But $f(t) = 0 \implies e^{\lambda(t-1)} = \frac{p(z^*)}{p(z^*) - 1}$, which is solved at
	\begin{align*}
		t &= 1 + \frac{1}{\lambda} \log\left( \frac{p(z^*)}{p(z^*) - 1} \right) \\
			&= 1 + \frac{1}{\lambda} \left( \log\left|\frac{p(z^*)}{p(z^*) - 1}\right|
				+ i\left( \arg\left( \frac{p(z^*)}{p(z^*) - 1} \right) + 2\pi\Z \right) \right) \\
			&= 1 + \frac{1}{\lambda} \log\left( \frac{p(z^*)}{1 - p(z^*)} \right) + \frac{i\pi}{\lambda}(1+2\Z).
	\end{align*}
	That is, the branch points of $\sqrt[z^*]{G(t)}$ all occur off the real axis,
	and therefore $\sqrt[z^*]{G(t)}$ is not a valid PGF.
	Hence $(\zt_n | \zt_{n-1} = z^*)$ is not $z^*$-divisible,
	and consequently $\cbp$ cannot be expressed equivalently as a PSDBP.
    \qed
\end{proof}
\newline

\begin{proof}[\textbf{Proof of \thref{GeometricOffspringEquivalence}.}]
    The proof of this result relies on the following lemma.
\end{proof}

\begin{lemma}\label{ZIGInfinitelyDivisible}
	Suppose that $X$ has a zero-inflated geometric distribution with parameters $p \in (0,1)$ and $q \in (0,1)$, i.e.\ $X \sim \zig(p,q)$,
	so that $X \stackrel{d}{=} YZ$ for $Y \sim \ber(1-p)$ and $Z \sim \geom(q)$ independent.
	Then $X$ is infinitely divisible.
\end{lemma}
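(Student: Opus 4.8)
The plan is to compute the probability generating function (PGF) of $X$ in closed form, recognise it as the PGF of a compound Poisson law, and conclude infinite divisibility from there. Throughout, $\geom(q)$ denotes the $\N_0$-supported geometric distribution, whose PGF is $G_Z(t)=q/(1-(1-q)t)$. Conditioning on $Y$,
\[
    G_X(t)\;=\;\E\big[t^{YZ}\big]\;=\;p+(1-p)G_Z(t)\;=\;\frac{A-pat}{1-at},\qquad a:=1-q,\quad A:=p+(1-p)q,
\]
so $G_X$ is a linear-fractional function of $t$. Writing this as
\[
    G_X(t)\;=\;A\,\frac{1-bt}{1-at},\qquad b:=\frac{pa}{A},
\]
an elementary check gives $0<b<a<1$, with the strict inequality $b<a$ amounting exactly to $p<1$; so the hypotheses $p,q\in(0,1)$ are precisely what is needed. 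Note $A=G_X(0)=\P(X=0)>0$.

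Next I would take logarithms. Since $0<b<a<1$, both $-\log(1-at)$ and $-\log(1-bt)$ are convergent power series with positive coefficients, and
\[
    \log G_X(t)\;=\;\log A+\sum_{k\geq1}\frac{a^k-b^k}{k}\,t^k.
\]
Setting $\lambda_k:=(a^k-b^k)/k>0$, we have $\lambda:=\sum_{k\geq1}\lambda_k=\log\tfrac{1-b}{1-a}<\infty$, and evaluating at $t=1$ (using $G_X(1)=1$) forces $\log A=-\lambda$. Hence
\[
    G_X(t)\;=\;\exp\!\Big(\sum_{k\geq1}\lambda_k\,(t^k-1)\Big)\;=\;\exp\big(\lambda(Q(t)-1)\big),\qquad Q(t):=\lambda^{-1}\sum_{k\geq1}\lambda_k t^k,
\]
where $Q$ is a genuine PGF ($Q(0)=0$, $Q(1)=1$, non-negative coefficients). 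Thus $X$ has a compound Poisson distribution. Infinite divisibility is then immediate: for each $n\in\N_1$, $\exp\big(\tfrac{\lambda}{n}(Q(t)-1)\big)$ is again the PGF of a (compound Poisson) random variable $X^{(n)}$, and $n$ i.i.d.\ copies of $X^{(n)}$ have PGF $\exp(\lambda(Q(t)-1))=G_X(t)$, so $X\stackrel{d}{=}\sum_{i=1}^n X_i^{(n)}$ for every $n$.

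The argument is short, and the one step needing care is the verification $0<b<a<1$ — equivalently, that the unique zero $1/b$ of $G_X$ lies strictly beyond its pole $1/a$ — since this is exactly what makes every $\lambda_k$ positive. As an alternative to the logarithmic expansion one may invoke the criterion that an $\N_0$-valued law with positive mass at $0$ is infinitely divisible iff $G_X'/G_X$ has non-negative Taylor coefficients \cite{steutel03}; here $G_X'(t)/G_X(t)=(a-b)/\big((1-at)(1-bt)\big)=\sum_{k\geq0}(a^{k+1}-b^{k+1})t^k$ by partial fractions, which settles it at once. Finally, it is worth remarking that the $\N_0$-convention for $\geom(q)$ is genuinely used: under the $\N_1$-supported convention the zero-inflated geometric can violate the necessary condition $\P(X=1)^2\leq2\,\P(X=0)\,\P(X=2)$ and so fail to be infinitely divisible.
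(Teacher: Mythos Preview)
Your proof is correct, but it takes a genuinely different route from the paper's. The paper invokes the Warde--Katti log-convexity criterion \cite{warde71}: it simply checks that $\P(X=0),\P(X=1)>0$ and that the ratio sequence $\{\P(X=k+1)/\P(X=k)\}_{k\ge0}$ is non-decreasing, which here jumps from a value strictly below $1-q$ at $k=0$ to the constant $1-q$ for $k\ge1$. Your argument is instead constructive: you write $G_X$ as a linear-fractional function, expand $\log G_X$ term-by-term, and read off an explicit compound Poisson representation. The paper's proof is shorter and requires no generating-function manipulation; yours, on the other hand, actually exhibits the $n$-th root distributions (compound Poisson with rate $\lambda/n$ and the same jump law $Q$), which is arguably more useful downstream --- in the proof of Proposition~\ref{GeometricOffspringEquivalence} the paper immediately needs to split a $\zig$ variable into $z$ i.i.d.\ summands, and your decomposition makes that split concrete rather than merely existential. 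Your alternative via the non-negativity of the Taylor coefficients of $G_X'/G_X$ is the standard canonical-measure criterion for $\N_0$-valued laws and is essentially equivalent to your main argument; the closing remark on the $\N_1$-supported geometric convention is a nice sanity check and is correct.
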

\begin{proof}
	Assume that $X$ is a random variable with a discrete distribution supported on the non-negative integers.
	Warde and Katti show in Theorem 2.1 of \cite{warde71} that if $P(X=0) > 0$, $P(X=1) > 0$,
	and $\big\{ \P(X=k+1) / \P(X=k) \big\}_{k\in\N_0}$ is a non-decreasing sequence
	(in \cite{steutel79} and \cite[p.~59--65]{steutel03} this condition is called log-convexity), then $X$ is infinitely divisible.
	
	Suppose that $X \sim \zig(p,q)$, for $p \in (0,1)$ and $q \in (0,1)$.
	Then $P(X = 0) = p + (1-p)q$ and $P(X = k) = (1-p)(1-q)^k q$ for $k \geq 1$.
	Clearly $P(X = 0) > 0$ and $P(X = 1) > 0$.
	In addition,
	\[
		\frac{\P(X = 1)}{\P(X = 0)} = (1-q) \frac{(1-p)q}{p + (1-p)q} < 1-q,
	\]
	and for $k \geq 1$,
	\[
		\frac{\P(X=k+1)}{\P(X=k)} = 1-q,
	\]
	so that $\big\{ \P(X=k+1) / \P(X=k) \big\}_{k\in\N_0}$ forms a non-decreasing sequence.
    \qed
\end{proof}
\newline

\begin{proof}[Proof of \thref{GeometricOffspringEquivalence}, continued.]
	Assume that $\cbp$ is as stated above, with $\psi(0) = 0$. Then for $z \in \N_1$,
	\[
		(\zt_{n+1} | \zt_n = z)
			\stackrel{d}{=} \sum_{i=1}^{\pt(z)} \xt_i
			\stackrel{d}{=} \sum_{i=1}^{\psi(z)} B_i(z) \cdot \xt_i
			\stackrel{d}{=} \sum_{i=1}^{\psi(z)} \zeta_i(z),
	\]
	where $B_i(z) \sim \ber(p(z))$, and $\zeta_i (z):= B_i(z) \cdot \xt_i \sim \zig(1-p(z),q)$.
	By \thref{ZIGInfinitelyDivisible} the $\zeta_i(z)$'s are infinitely divisible,
	so we can represent each as the sum of $z$ i.i.d.\ random variables, say $\{\chi_{i,j}(z)\}_{j\in\{1,\dots,z\}}$, to get
	\[
		\sum_{i=1}^{\psi(z)} \zeta_i(z)
			\stackrel{d}{=} \sum_{i=1}^{\psi(z)} \sum_{j=1}^z \chi_{i,j}(z)
			\stackrel{d}{=} \sum_{i=1}^z \sum_{j=1}^{\psi(z)} \chi_{i,j}(z)
			\stackrel{d}{=} \sum_{i=1}^z \xi_i(z),
	\]
	for $\xi_i(z) := \sum_{j=1}^{\psi(z)} \chi_{i,j}(z)$.
	But this form is just that of a PSDBP.
    \qed
\end{proof}
\newline

\begin{proof}[\textbf{Proof of \thref{DCBPEquivalenceConditions}.}]
	Let $\cbp$ be a DCBP with control function $\phi(\cdot)$, offspring distribution $\xt$, and initial population size $z_0 \in \N_0$.
	In the case that $\phi(0) \neq 0$, by \thref{noimmigration} $\cbp$ cannot be expressed as a PSDBP.
	Therefore to prove the result it remains to show that, under the assumption that $\phi(0) = 0$,
	$\cbp$ can be expressed as a PDSBP if and only if $\cbp$ is $\zt$-divisible.
	
	We prove the forward direction first: assume that $\cbp$ is $\zt$-divisible (which implies that $\phi(0) = 0$).
	We want to show that there exist appropriate i.i.d.\ random variables $\xi_{n,i}(z)$ such that,
	for all attainable $z \neq 0$,
	$\sum_{i=1}^{\phi(z)} \tilde{\xi}_{n,i} \stackrel{d}{=} \sum_{l=1}^z \xi_{n,l}(z)$.

	But since $\cbp$ is $\tilde{Z}$-divisible, there exists $y\in\N_1$ such that $x := y \cdot \phi(z) / z \in \N_0$
	and $\tilde{\xi}_{n,i} \stackrel{d}{=} \sum_{j=1}^{y} \zeta_{n,i,j}^{(y)}$ for some i.i.d.\ random variables $\zeta_{n,i,j}^{(y)}$.
	Therefore
	\begin{equation*}
		\sum_{i=1}^{\phi(z)} \tilde{\xi}_{n,i}
		\stackrel{d}{=} \sum_{i=1}^{\phi(z)} \sum_{j=1}^y \zeta_{n,i,j}^{(y)}
		\stackrel{d}{=} \sum_{k=1}^{\phi(z)\cdot y} \zeta_{n,k}^{(y)}
		\stackrel{d}{=} \sum_{l=1}^z \xi_{n,l}(z)
	\end{equation*}
	where the $\zeta_{n,k}^{(y)}$'s are just reindexed $\zeta_{n,i,j}^{(y)}$'s (in any order, since they are i.i.d.),
	and $\xi_{n,l}(z) := \sum_{k=x(l-1) + 1}^{xl} \zeta_{n,k}^{(y)}$, $l \in [z]$.

    In the reverse direction, we first need to show the following intermediary result.
\end{proof}

\begin{lemma}\label{divisiblefactors}
	Let $X$ be a random variable. If, for some $p\in\N_1\setminus\{1\}$, $X$ is $p$-divisible,
	then $X$ is also $q$-divisible for any $q\in\N_1\setminus\{1\}$ such that $q$ divides $p$.
\end{lemma}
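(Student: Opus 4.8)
The plan is to pass from a $p$-fold i.i.d. decomposition of $X$ to a $q$-fold one by grouping the summands into blocks of size $p/q$. Write $p = q \cdot r$ with $r \in \N_1$. Since $X$ is $p$-divisible, there are i.i.d. random variables $X_1^{(p)}, \dots, X_p^{(p)}$ with $X \stackrel{d}{=} \sum_{i=1}^p X_i^{(p)}$. First I would partition the index set $[p]$ into $q$ consecutive blocks each of cardinality $r$, and define, for $j \in [q]$,
\[
	Y_j := \sum_{i = r(j-1)+1}^{rj} X_i^{(p)}.
\]
Then $\sum_{j=1}^q Y_j = \sum_{i=1}^p X_i^{(p)} \stackrel{d}{=} X$, so it only remains to check that $Y_1, \dots, Y_q$ are i.i.d. Each $Y_j$ is a sum of $r$ of the i.i.d. variables $X_i^{(p)}$, hence all $Y_j$ share the same distribution (the $r$-fold convolution of the law of $X_1^{(p)}$); and since the blocks use disjoint collections of the independent family $\{X_i^{(p)}\}_{i\in[p]}$, the $Y_j$ are mutually independent. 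Therefore $X \stackrel{d}{=} \sum_{j=1}^q Y_j$ with $Y_1,\dots,Y_q$ i.i.d., which is exactly $q$-divisibility (noting $q \geq 2$ by hypothesis).

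There is essentially no obstacle here: the only mild point of care is the bookkeeping that $q$ dividing $p$ forces $r = p/q$ to be a positive integer so the blocks are well-defined, and that the case $q = p$ is trivial ($r=1$, $Y_j = X_j^{(p)}$). I would state these in a single line. This lemma is then the tool that drives the reverse direction of \thref{DCBPEquivalenceConditions}: a $\phi(z)$-fold decomposition of a sum reduces, via $\mathrm{gcd}\{\phi(z),z\}$, to divisibility of $\tilde\xi$ by $z/\mathrm{gcd}\{\phi(z),z\}$.
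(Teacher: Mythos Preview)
Your proof is correct and essentially identical to the paper's own argument: both write $p = rq$, decompose the $p$-fold i.i.d.\ sum into $q$ blocks $Y_j := \sum_{k=r(j-1)+1}^{rj} X_k^{(p)}$, and observe that the $Y_j$ are i.i.d.\ to conclude $q$-divisibility. Your version is slightly more explicit about why the $Y_j$ are independent and identically distributed, but the structure and notation match the paper almost line for line.
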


\begin{proof}
	If $q$ divides $p$ then there exists an $r\in\N_1$ such that $p = rq$. Also, since $X$ is $p$-divisible, there exist i.i.d.\ RVs
	$X_1^{(p)}, \dots, X_p^{(p)}$ such that $X \stackrel{d}{=} \sum_{i=1}^p X_i^{(p)}$.
	Therefore
	\begin{equation*}
		X \stackrel{d}{=} \sum_{i=1}^p X_i^{(p)}
		= \sum_{i=1}^{rq} X_i^{(p)}
		= \sum_{j=1}^{q} \sum_{k=r(j-1)+1}^{rj} X_k^{(p)}
		= \sum_{j=1}^{q} Y_j,
	\end{equation*}
	where $Y_j := \sum_{k=r(j-1)+1}^{rj} X_k^{(p)} \stackrel{d}{=} \sum_{k=1}^{r} X_k^{(p)}$.
	Since the $Y_j$'s are i.i.d., the definition of $q$-divisibility is satisfied.
    \qed
\end{proof}
\newline

\begin{proof}[Proof of \thref{DCBPEquivalenceConditions}, continued.]
	Assume that $\phi(0) = 0$ but $\cbp$ is \textit{not} $\tilde{Z}$-divisible.
	Then there exists a $y^* \in \mathcal{Y}_{\tilde{Z}}$ for which there is \textit{no} collection $\big\{ \zeta_{n,i,j}^{(y^*)} \big\}$
	of i.i.d.\ random variables such that $\tilde{\xi}_{n,i} \stackrel{d}{=} \sum_{j=1}^{y^*} \zeta_{n,i,j}^{(y^*)}$.
	But, by \thref{divisiblefactors}, if $\cbp$  is not $y^*$-divisible then for each
	$z \in \mathcal{A}(z_0) \setminus \{0\}$ such that
	$z / \text{gcd}[\phi(z), z] = y^*$, $\sum_{i=1}^{\phi(z)} \tilde{\xi}_{n,i}$ is not $z$-divisible.
	Therefore there are no $\xi_{n,l}(z)$'s such that $\sum_{i=1}^{\phi(z)} \tilde{\xi}_{n,i} \stackrel{d}{=} \sum_{l=1}^z \xi_{n,l}(z)$.
    \qed
\end{proof}
\newline

\subsection{Proofs from Section \ref{sec:approxequal}}

\begin{definition}
	A random variable $W$ is said to have a \textit{\textbf{discretised normal distribution}} 
	with parameters $\mu$ and $\sigma^2$ if for every $k\in\Z$,
	\[
		\P(W = k)
		= \frac{1}{\sqrt{2\pi\sigma^2}} \int_{k - \frac{1}{2}}^{k + \frac{1}{2}} e^{-\frac{(u - \mu)^2}{2\sigma^2}} du.
	\]
\end{definition}

\begin{proof}[{\bfseries Proof of \thref{OneStepTVDBound}}.]
	Since $\psdbp$ and $\cbp$ are assumed to have matching first and second moments,
    it follows from \thref{MatchingMomentsEquations} that $z \cdot m(z) = \tilde{m} \cdot \phi(z)$ and $z \cdot \sigma^2(z) = \tilde{\sigma}^2 \cdot \phi(z)$.
    Let $W$ be a random variable with a discretised normal distribution with parameters $z \cdot m(z)$ and $z \cdot \sigma^2(z)$.
    We can then use the triangle inequality to obtain the bound
	\[
		|| \L_{Z_1|Z_0 = z} - \L_{\tilde{Z}_1|\tilde{Z}_0 = z} ||_{TV}
			\leq || \L_{Z_1|Z_0 = z} - \L_W ||_{TV}
			+ || \L_{\tilde{Z}_1|\tilde{Z}_0 = z} - \L_W ||_{TV}.
	\]
    Comparison to the discretised normal allows us to leverage the results of \cite{chen11} to produce a closed-form expression for the TVD bound via the next lemma.
 \end{proof}

\begin{lemma}\label{DiscretisedNormalTVDBound}
	For $n\in\N_1$, let $X_1, \dots, X_n$ and $X$ be i.i.d.\ random variables on $\Z$,
	with $\E X := \mu$, $\V(X) := \sigma^2$, and finite third absolute central moment $\rho := \E|X - \mu|^3$.
    Let $W_n$ be a random variable with a discretised normal distribution with parameters $n\mu$ and $n\sigma^2$,
    and define $S_n := \sum_{i=1}^n X_i$. Then we have
	\begin{align*}
		||\L_{S_n} - \L_{\bar{W}} ||_{TV} &\leq \sqrt{\frac{2}{\pi}} \left( \frac{3\rho}{\sigma^2} + 2 \right)
			\Big( 1 + 4 (n-1)\left( 1- ||\L_{X} - \L_{(X + 1)} ||_{TV} \right) \Big)^{-\frac{1}{2}} \\
		&\quad + \left( 5 + 3\sqrt{\frac{\pi}{8}} \right)\frac{\rho}{\sqrt{n}\sigma^3} + \frac{1}{2\sqrt{2\pi n}\sigma}.
	\end{align*}
\end{lemma}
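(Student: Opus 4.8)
The plan is to obtain this bound as a direct consequence of the local and global central limit theorems for sums of independent integer-valued random variables proved by Chen, Goldstein and Shao \cite{chen11}, combined with a triangle-inequality split between the two natural continuous/discrete normal approximations. First I would introduce the continuous normal $N \sim \mathcal{N}(n\mu, n\sigma^2)$ and the discretised normal $\bar W = W_n$ as defined just above, and write
\[
    \|\L_{S_n} - \L_{\bar W}\|_{TV} \leq \|\L_{S_n} - \L_{\lfloor N + \tfrac12\rfloor}\|_{TV} + \|\L_{\lfloor N+\tfrac12\rfloor} - \L_{\bar W}\|_{TV},
\]
noting that $\lfloor N+\tfrac12\rfloor$ has exactly the discretised normal law, so the second term vanishes and it suffices to bound $\|\L_{S_n} - \L_{\lfloor N+\tfrac12\rfloor}\|_{TV}$. (Alternatively one works directly from a concentration-function / smoothing inequality; the point is that the target bound has two pieces, one of order $(1 + c(n-1))^{-1/2}$ and one of order $n^{-1/2}$, which signals that two different estimates are being combined.)

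Next I would invoke the quantitative results of \cite{chen11}. Their Berry--Esseen-type bound for integer-valued sums controls $\sup_k |\P(S_n \le k) - \P(N \le k)|$ (equivalently the Kolmogorov distance) by a term of order $\rho/(\sigma^3\sqrt n)$, which after accounting for the $\tfrac12$-shift and passing from the cumulative-distribution distance to the pointwise/TV distance on $\Z$ produces the $\big(5 + 3\sqrt{\pi/8}\big)\rho/(\sqrt n\,\sigma^3)$ term together with the $1/(2\sqrt{2\pi n}\,\sigma)$ term (the latter coming from the maximal value of the normal density, $\P(\lfloor N+\tfrac12\rfloor = k) \le (2\pi n\sigma^2)^{-1/2}$). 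The first, slower-decaying term is where the smoothness of the summand enters: \cite{chen11} bound the point probabilities via a smoothing argument whose effectiveness is governed by how far $X$ is from a lattice coarser than $\Z$, and the natural measure of this is $1 - \|\L_X - \L_{X+1}\|_{TV}$ — the overlap of the law of $X$ with its unit translate. Summing (or rather, using their concentration inequality for $S_n$) gives an effective "variance" proportional to $1 + 4(n-1)(1 - \|\L_X - \L_{X+1}\|_{TV})$, and dividing the crude $(3\rho/\sigma^2 + 2)$-type bound by the square root of this quantity yields the first displayed term.

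Concretely, the key steps in order are: (1) reduce to bounding $\|\L_{S_n} - \L_{\lfloor N+\frac12\rfloor}\|_{TV}$ via the triangle inequality and the observation that $\lfloor N + \tfrac12\rfloor \stackrel{d}{=} \bar W$; (2) split this TV distance into a "global" part, handled by the integer Berry--Esseen bound of \cite{chen11}, contributing the $\rho/(\sqrt n\sigma^3)$ and $1/(\sqrt{2\pi n}\sigma)$ terms; (3) handle the "local" part — the supremum over $k$ of $|\P(S_n = k) - \P(\bar W = k)|$ times the support size, or more precisely the $\ell^1$ comparison of point masses — using the local-limit/concentration estimate of \cite{chen11}, which is where the factor $\big(1 + 4(n-1)(1-\|\L_X - \L_{X+1}\|_{TV})\big)^{-1/2}$ appears and carries the $\tfrac3\pi\rho/\sigma^2 + 2$ coefficient after multiplying by $\sqrt{2/\pi}$; (4) collect the three contributions and simplify the constants. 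The main obstacle is step (3): one must be careful that the relevant "smoothness budget" of a single $X_i$ is exactly $1 - \|\L_X - \L_{X+1}\|_{TV}$ (this is the quantity that enters the coupling-based concentration bound in \cite{chen11}), and that the constants $\sqrt{2/\pi}$, $3$, $2$ come out as stated rather than with extra slack; the rest is bookkeeping with the normal density's sup-norm and the $\tfrac12$-shift between $S_n$ and $\lfloor N+\tfrac12\rfloor$.
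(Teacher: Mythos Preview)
Your route is substantially more circuitous than the paper's, and it misses the actual structure of the argument. The paper does not split into ``global'' Berry--Esseen and ``local'' concentration pieces at all; it simply quotes Theorem~7.4 of \cite{chen11}, which in the i.i.d.\ case already delivers a TV bound between $S_n$ and the discretised normal of the form
\[
\|\L_{S_n} - \L_{W_n}\|_{TV} \leq \left(\tfrac{3\rho}{2\sigma^2}+1\right)\|\L_{S_{n-1}} - \L_{S_{n-1}+1}\|_{TV} + \left(5+3\sqrt{\tfrac{\pi}{8}}\right)\tfrac{\rho}{\sqrt{n}\sigma^3} + \tfrac{1}{2\sqrt{2\pi n}\sigma}.
\]
Note that the smoothness factor here is the shift-TV of the \emph{partial sum} $S_{n-1}$, not of a single summand $X$. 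The passage from $\|\L_{S_{n-1}}-\L_{S_{n-1}+1}\|_{TV}$ down to $1-\|\L_X - \L_{X+1}\|_{TV}$ is a separate step, and it is not in \cite{chen11}: the paper uses Corollary~1.6 of Mattner--Roos \cite{mattner07}, which gives exactly
\[
\|\L_{S_{n-1}} - \L_{S_{n-1}+1}\|_{TV} \leq 2\sqrt{\tfrac{2}{\pi}}\Big(1+4(n-1)\big(1-\|\L_X - \L_{X+1}\|_{TV}\big)\Big)^{-1/2}.
\]
Multiplying these two displays gives the lemma verbatim.

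Your proposal has two concrete gaps. First, the triangle-inequality detour through the continuous normal and $\lfloor N + \tfrac12\rfloor$ is vacuous (as you yourself note, $\lfloor N+\tfrac12\rfloor \stackrel{d}{=} \bar W$), so nothing is gained there; but more importantly, converting a Kolmogorov/Berry--Esseen bound into a TV bound on $\Z$ is not ``bookkeeping'' --- it requires exactly the kind of local-limit input that Theorem~7.4 of \cite{chen11} packages for you, and reproducing the stated constants from scratch is nontrivial. Second, you attribute the $\big(1 + 4(n-1)(1-\|\L_X-\L_{X+1}\|_{TV})\big)^{-1/2}$ factor to \cite{chen11}, but it does not come from there; it is the Mattner--Roos concentration inequality for sums of i.i.d.\ lattice variables. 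Without citing \cite{mattner07} (or reproving it) your step (3) does not close, and your own caveat that ``the constants come out as stated rather than with extra slack'' is precisely where the argument would stall.
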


\begin{proof}
	In the case of an i.i.d.\ sum, Theorem 7.4 of \cite{chen11} simplifies to
	\begin{equation*}
		||\L_{S_n} - \L_{W_n} ||_{TV}
		\leq \left( \frac{3\rho}{2\sigma^2} + 1 \right) || \L_{S_{n-1}} - \L_{(S_{n-1} + 1)} ||_{TV}
		+ \left( 5 + 3\sqrt{\frac{\pi}{8}} \right)\frac{\rho}{\sqrt{n}\sigma^3}
		+ \frac{1}{2\sqrt{2\pi n}\sigma}.
	\end{equation*}
	Corollary 1.6 of \cite{mattner07} then provides the bound
	\begin{align*}
		||\L_{S_{n-1}} - \L_{(S_{n-1} + 1)} ||_{TV}
		&\leq \sqrt{\frac{2}{\pi}} \left( \frac{1}{4} + (n-1)\left( 1 - ||\L_{X} - \L_{(X + 1)} ||_{TV} \right) \right)^{-\frac{1}{2}} \\
		&= 2 \sqrt{\frac{2}{\pi}} \Big( 1 + 4 (n-1)\left( 1 - ||\L_{X} - \L_{(X + 1)} ||_{TV} \right) \Big)^{-\frac{1}{2}},
	\end{align*}
	which yields the desired result.
    \qed
\end{proof}
\newline

\begin{proof}[Proof of \thref{OneStepTVDBound}, continued.]
    Since Condition (C2) ensures that the third absolute central moments of both $\psdbp$ and $\cbp$ are finite,
    applying \thref{DiscretisedNormalTVDBound} to our setting allows us to obtain
	\begin{align*}
		& || \L_{Z_1|Z_0 = z} - \L_{\tilde{Z}_1|\tilde{Z}_0 = z} ||_{TV} \\
		&\leq \sqrt{\frac{2}{\pi}} \left( \frac{3\rho(z)}{\sigma(z)^2} + 2 \right)
			\Big( 1 + 4 (z-1)\left( 1 - ||\L_{\xi(z)} - \L_{(\xi(z) + 1)} ||_{TV} \right) \Big)^{-\frac{1}{2}} \\
		&\quad + \sqrt{\frac{2}{\pi}} \left( \frac{3\tilde{\rho}}{\tilde{\sigma}^2} + 2 \right)
			\Big( 1 + 4(\phi(z)-1)\left( 1 - ||\L_{\tilde{\xi}} - \L_{(\tilde{\xi} + 1)} ||_{TV} \right) \Big)^{-\frac{1}{2}} \\
		&\quad + \left( 5 + 3\sqrt{\frac{\pi}{8}} \right)\left( \frac{\rho(z)}{\sqrt{z}\sigma^3(z)}
			+ \frac{\tilde{\rho}}{\sqrt{\phi(z)}\tilde{\sigma}^3} \right)
			+ \frac{1}{\sqrt{2\pi \phi(z)}\tilde{\sigma}}.
	\end{align*}

	We next want to remove the dependence of this bound on $||\L_{\xi(z)} - \L_{(\xi(z) + 1)} ||_{TV}$ and
	$||\L_{\tilde{\xi}} - \L_{(\tilde{\xi} + 1)} ||_{TV}$.
	Indeed, the bound is \textit{increasing} in both $||\L_{\xi(z)} - \L_{(\xi(z) + 1)} ||_{TV}$
	and $||\L_{\tilde{\xi}} - \L_{(\tilde{\xi} + 1)} ||_{TV}$, so we want to find a constant $\gamma$ such that
	\[
		\gamma \geq ||\L_{\tilde{\xi}} - \L_{(\tilde{\xi} + 1)} ||_{TV} \text{   and   }
		\gamma \geq ||\L_{\xi(z)} - \L_{(\xi(z) + 1)} ||_{TV} \text{ for all } z\in\N_1,
	\]
	or equivalently a constant $\delta$ such that
	\[
		\delta \leq 1 - ||\L_{\tilde{\xi}} - \L_{(\tilde{\xi} + 1)} ||_{TV} \text{   and   }
		\delta \leq 1 - ||\L_{\xi(z)} - \L_{(\xi(z) + 1)} ||_{TV} \text{ for all } z\in\N_1.
	\]

	We can re-express $1 - ||\L_{\tilde{\xi}} - \L_{(\tilde{\xi} + 1)} ||_{TV}$ as $\sum_{i\in\N_0} \P(\tilde{\xi} = i) \wedge \P(\tilde{\xi} = i - 1)$ by using \eqref{eqn:TVDOneMinusEquivalentSum},
	so it follows that $\P(\tilde{\xi} = n) \wedge \P(\tilde{\xi} = n - 1) \leq 1 - ||\L_{\tilde{\xi}} - \L_{(\tilde{\xi} + 1)} ||_{TV}$
	for any fixed $n\in\N_1$. An analogous result holds for $1 - ||\L_{\xi(z)} - \L_{(\xi(z) + 1)} ||_{TV}$.
	
	Hence, if such an $\eta$ as in Condition (C3) exists, then $2\eta \leq 1 - ||\L_{\tilde{\xi}} - \L_{(\tilde{\xi} + 1)} ||_{TV}$
	and $2\eta \leq 1 - ||\L_{\xi(z)} - \L_{(\xi(z) + 1)} ||_{TV}$ for every $z\in\N_1$, so we can take $\delta \equiv \eta$.
	Alongside Conditions (C1) and (C2), this allows us to simplify the bound to
	\begin{align*}
		& || \L_{Z_1|Z_0 = z} - \L_{\tilde{Z}_1|\tilde{Z}_0 = z} ||_{TV} \\
		&\leq \sqrt{\frac{2}{\pi}} \left( \frac{3R}{h\tilde{\sigma}^2} + 2 \right) \left( 1 + 4\eta(z-1) \right)^{-\frac{1}{2}}
			+ \sqrt{\frac{2}{\pi}} \left( \frac{3R}{\tilde{\sigma}^2} + 2 \right) \left( 1 + 4\eta(hz - 1) \right)^{-\frac{1}{2}} \\
		&\quad +  \left( 5 + 3\sqrt{\frac{\pi}{8}} \right) \left( \frac{R}{\sqrt{zh^3}\tilde{\sigma}^3} + \frac{R}{\sqrt{zh}\tilde{\sigma}^3} \right)
			+ \frac{1}{\sqrt{2\pi hz}\tilde{\sigma}} \\
		&\leq \frac{2\sqrt{2}\left( 3R + 2(1 + h)\tilde{\sigma}^2 \right)}{\tilde{\sigma}^2 (h\wedge 1)\sqrt{\pi + 4\pi\eta(z - 1)}}
			+ \frac{\left( 5\sqrt{2\pi} + \frac{3\pi}{2} \right) (1 + h) R + h\tilde{\sigma}^2}{\tilde{\sigma}^3\sqrt{2\pi h^3 z}}.
	\end{align*}
 
	It remains to show that $\sqrt{\pi + 4\pi\eta(z - 1)} \geq  2\sqrt{\pi\eta z}$.
	Because $\P(\tilde{\xi} = n) + \P(\tilde{\xi} = n - 1) \leq 1$, it must be the case that
	$\P(\tilde{\xi} = n) \wedge \P(\tilde{\xi} = n - 1) \leq \frac{1}{2}$ for every $n \in \N_1$.
	Hence it follows from Condition (C3) that $\eta \leq \frac{1}{4}$, so that
	\[
		\sqrt{\pi + 4\pi\eta(z - 1)} = \sqrt{\pi - 4\pi\eta + 4\pi\eta z} \geq \sqrt{\pi - \pi + 4\pi\eta z} = 2\sqrt{\pi\eta z},
	\]
	which implies that
	\[
		|| \L_{Z_1|Z_0 = z} - \L_{\tilde{Z}_1|\tilde{Z}_0 = z} ||_{TV}
		\leq \frac{\sqrt{2}\left( 3R + 2(1 + h)\tilde{\sigma}^2 \right)}{\tilde{\sigma}^2 (h\wedge 1) \sqrt{\pi \eta z}}
			+ \frac{\left( 5\sqrt{2\pi} + \frac{3\pi}{2} \right) (1 + h) R + h\tilde{\sigma}^2}{\tilde{\sigma}^3\sqrt{2\pi h^3 z}},
	\]
	our desired bound.
    \qed
\end{proof}
\newline

\begin{proof}[\textbf{Proof of \thref{KStepTVDBound}}.]
	Suppose that we have a PSDBP $\psdbp$ and a DCBP $\cbp$
	with matching moments and with $Z_0 = \tilde{Z}_0 = z \in \N_1$,
    such that Conditions (C1), (C2), and (C3) are satisfied.
	Fix an $\alpha \in (0,1)$ and $k\in\N_1$.
 
    Assume that for any $j \in \N_1$ such that $j \leq k$,
	\begin{equation}\label{eqn:TVDInductionBound}
		||\L_{(Z_j, \dots, Z_k) | Z_{j-1} = u_{j-1}} - \L_{(\tilde{Z}_j, \dots, \tilde{Z}_k) | \tilde{Z}_{j-1} = u_{j-1}}||_{TV}
		\leq \K_j(u_{j-1}),
	\end{equation}
	where $\K_j: \R_{\geq 0} \to \R_{\geq 0}$ is a monotonically decreasing function of $u_{j-1}$ given by $\K_k(u_k) := \J(u_k)$ and for $j < k$,
	\begin{equation}\label{eqn:KRecursiveForm}
		\K_j(u_{j-1}) := \J(u_{j-1}) + \frac{\tilde{\sigma}^2}{(1-\alpha)^2\tilde{m}^2 h \cdot u_{j-1}}
			+ \K_{j+1}\big( \alpha\tilde{m}h \cdot u_{j-1} \big).
	\end{equation}
    Then, taking $u_0 := z$, we can apply \eqref{eqn:TVDInductionBound} iteratively $k$ times to obtain
	\begin{align*}
		&||\L_{(Z_1, \dots, Z_k)|Z_0 = z} - \L_{(\tilde{Z}_1, \dots, \tilde{Z}_k)|\tilde{Z}_0 = z}||_{TV} \\
		&\leq \J(z) + \frac{\tilde{\sigma}^2}{(1-\alpha)^2\tilde{m}^2 h \cdot z} + \K_2\big( \alpha\tilde{m}h \cdot z \big) \\
		&\leq \J(z) + \J\big( \alpha\tilde{m}h \cdot z \big)
			+ \frac{\tilde{\sigma}^2}{(1-\alpha)^2\tilde{m}^2 h \cdot z} + \frac{\tilde{\sigma}^2}{(1-\alpha)^2 \alpha \tilde{m}^3 h^2 \cdot z}
			+ \K_3\big( (\alpha\tilde{m}h)^2 \cdot z \big) \\
		&\vdots \\
		&\leq \sum_{i=0}^{k-2} \J\big( ( \alpha \tilde{m} h )^i \cdot z \big)
			+ \frac{\tilde{\sigma}^2}{(1 - \alpha)^2 \tilde{m}^2 h \cdot z} \cdot \sum_{i=0}^{k-2} ( \alpha \tilde{m} h )^{-i}
			+ \K_k\big( ( \alpha \tilde{m} h )^{k-1} \cdot z \big) \\
		&= \sum_{i=0}^{k-1} \J\big( ( \alpha \tilde{m} h )^i \cdot z \big)
			+ \frac{\tilde{\sigma}^2}{(1 - \alpha)^2 \tilde{m}^2 h \cdot z} \cdot \sum_{i=0}^{k-2} ( \alpha \tilde{m} h )^{-i},
	\end{align*}
	which is what we intended to show. \\

    Hence, it remains to prove that the above assumption is true,
    which is to say that the bound \eqref{eqn:TVDInductionBound} does hold for all $j \leq k$,
    where $\K_j$ is a decreasing function given by $\K_k := \J$ and by \eqref{eqn:KRecursiveForm} for $j < k$. \\

	\textit{Base case}:
	Under the assumed conditions, and for fixed $u_{k-1}\in\N_1$, we know from \thref{OneStepTVDBound}
	(alongside the time-homogeneity of branching processes) that
	\[
		||\L_{Z_k|Z_{k-1} = u_{k-1}} - \L_{\tilde{Z}_k|\tilde{Z}_{k-1} = u_{k-1}}||_{TV} \leq \J(u_{k-1}),
	\]
	where $\J(u_{k-1})$ is a decreasing function of $u_{k-1}$. \\

	\textit{Induction step}: For $j \in \N_1$ such that $j < k$, assume that
 	\[
		||\L_{(Z_{j+1}, \dots, Z_k) | Z_j = u_j} - \L_{(\tilde{Z}_{j+1}, \dots, \tilde{Z}_k) | \tilde{Z}_j = u_j}||_{TV}
		\leq \K_{j+1}(u_j),
	\]
    where $u_j \in \N_1$ and $\K_{j+1}: \R_{\geq 0} \to \R_{\geq 0}$ is a decreasing function of $u_j$.
    We want to show that
	\[
		||\L_{(Z_j, \dots, Z_k) | Z_{j-1} = u_{j-1}} - \L_{(\tilde{Z}_j, \dots, \tilde{Z}_k) | \tilde{Z}_{j-1} = u_{j-1}}||_{TV}
		\leq \K_j(u_{j-1}),
	\]
	where $u_{j-1} \in \N_1$ and $\K_j: \R_{\geq 0} \to \R_{\geq 0}$ is a decreasing function of $u_{j-1}$ given by \eqref{eqn:KRecursiveForm}. \\

	For clarity of exposition, we introduce the notation $p_{Z_k}(a, b) := \P(Z_k = b | Z_{k-1} = a)$.
	Then, by writing the total variation distance in the form of \eqref{eqn:TVDEquivalentSum} and using the triangle inequality, we see that
    \begingroup
    \allowdisplaybreaks
	\begin{align*}
		& ||\L_{(Z_j, \dots, Z_k) | Z_{j-1} = u_{j-1}} - \L_{(\tilde{Z}_j, \dots, \tilde{Z}_k) | \tilde{Z}_{j-1} = u_{j-1}}||_{TV} \\
		&= \frac{1}{2} \sum_{u_j,\dots,u_k\geq 0}
			\bigg| \prod_{i=j}^k p_{Z_i}(u_{i-1},u_i) - \prod_{i=j}^{k} p_{\tilde{Z}_i}(u_{i-1},u_i) \bigg| \\
		&\leq \frac{1}{2}\sum_{u_j,\dots,u_k\geq 0} \bigg| \prod_{i=j}^k p_{Z_i}(u_{i-1},u_i)
			- p_{\tilde{Z}_j}(u_{j-1},u_j) \prod_{i=j+1}^{k} p_{Z_i}(u_{i-1},u_i) \bigg| \\
		&\quad + \frac{1}{2}\sum_{u_j,\dots,u_k\geq 0} \bigg| p_{\tilde{Z}_j}(u_{j-1},u_j) \prod_{i=j+1}^{k} p_{Z_i}(u_{i-1},u_i)
			- \prod_{i=j}^{k} p_{\tilde{Z}_i}(u_{i-1},u_i) \bigg| \\
		&\leq \frac{1}{2}\sum_{u_j = 0}^{\infty} \Big| p_{Z_j}(u_{j-1},u_j) - p_{\tilde{Z}_j}(u_{j-1},u_j) \Big| \\
		&\quad + \frac{1}{2} \sum_{u_j = 0}^{\infty} p_{\tilde{Z}_j}(u_{j-1},u_j) \sum_{u_{j+1},\dots,u_k\geq 0}
			\bigg| \prod_{i=j+1}^k p_{Z_i}(u_{i-1},u_i) - \prod_{i=j+1}^{k} p_{\tilde{Z}_i}(u_{i-1},u_i) \bigg| \\
		&= ||\L_{Z_j | Z_{j-1} = u_{j-1}} - \L_{\tilde{Z}_j | \tilde{Z}_{j-1} = u_{j-1}}||_{TV} \\
		&\quad + \sum_{u_j = 0}^{\infty} p_{\tilde{Z}_j}(u_{j-1},u_j)
			\cdot ||\L_{(Z_{j+1}, \dots, Z_k) | Z_j = u_j} - \L_{(\tilde{Z}_{j+1}, \dots, \tilde{Z}_k) | \tilde{Z}_j = u_j}||_{TV}.
	\end{align*}
    \endgroup
	An application of \thref{OneStepTVDBound} produces a bound for the first of the two terms above, showing that
	$||\L_{Z_j | Z_{j-1} = u_{j-1}} - \L_{\tilde{Z}_j | \tilde{Z}_{j-1} = u_{j-1}}||_{TV} \leq \J(u_{j-1})$.
	The second term is dealt with as follows: for some $N\in\N_1$ such that $N < \phi(u_{j-1}) \cdot \tilde{m}$,
	we split the series into the partial sum up to $N$, and the series beginning from $N+1$.
	Then, noting that the total variation distance cannot exceed a value of one,
	and is also assumed to be bounded by the decreasing function $\K_{j+1}$, we have
	\begin{align*}
		& ||\L_{(Z_j, \dots, Z_k) | Z_{j-1} = u_{j-1}} - \L_{(\tilde{Z}_j, \dots, \tilde{Z}_k) | \tilde{Z}_{j-1} = u_{j-1}}||_{TV} \\
		&\leq \J(u_{j-1}) + \sum_{u_j = 1}^N p_{\tilde{Z}_j}(u_{j-1},u_j) \cdot 1
			+ \sum_{u_j = N+1}^{\infty} p_{\tilde{Z}_j}(u_{j-1},u_j) \cdot \K_{j+1}(u_j) \\
		&\leq  \J(u_{j-1}) + \sum_{u_j = 1}^N p_{\tilde{Z}_j}(u_{j-1},u_j)
			+ \K_{j+1}(N+1) \cdot \sum_{u_j = N+1}^{\infty} p_{\tilde{Z}_j}(u_{j-1},u_j)\\
		&\leq \J(u_{j-1}) + \P(\tilde{Z}_j \leq N | \tilde{Z}_{j-1} = u_{j-1}) + \K_{j+1}(N+1). \\
	\end{align*}
	Applying Chebyshev's inequality yields
	\begin{align*}
		\P(\tilde{Z}_j \leq N | \tilde{Z}_{j-1} = u_{j-1})
			&\leq \P\big( | \tilde{Z}_j - \phi(u_{j-1}) \cdot \tilde{m} |
				\geq \phi(u_{j-1}) \cdot \tilde{m} - N \,\big|\, \tilde{Z}_{j-1} = u_{j-1} \big) \\
		&\leq \frac{\phi(u_{j-1}) \cdot \tilde{\sigma}^2}{\big(\phi(u_{j-1}) \cdot \tilde{m} - N\big)^2},
	\end{align*}
	so, by setting $N := \lfloor \alpha \cdot \phi(u_{j-1}) \cdot \tilde{m} \rfloor$, we arrive at the bound
	\begin{align*}
		& ||\L_{(Z_j, \dots, Z_k) | Z_{j-1} = u_{j-1}} - \L_{(\tilde{Z}_j, \dots, \tilde{Z}_k) | \tilde{Z}_{j-1} = u_{j-1}}||_{TV} \\
		&\leq \J(u_{j-1}) + \frac{\phi(u_{j-1}) \cdot \tilde{\sigma}^2}{\big( \phi(u_{j-1}) \cdot \tilde{m} - N \big)^2}
			+ \K_{j+1}(N+1) \\
		&= \J(u_{j-1}) + \frac{\phi(u_{j-1}) \cdot \tilde{\sigma}^2}
			{\big( \phi(u_{j-1}) \cdot \tilde{m} - \lfloor \alpha \cdot \phi(u_{j-1}) \cdot \tilde{m} \rfloor \big)^2}
			+ \K_{j+1}\big( \lfloor \alpha \cdot \phi(u_{j-1}) \cdot \tilde{m} \rfloor + 1 \big) \\
		&\leq \J(u_{j-1}) + \frac{\phi(u_{j-1}) \cdot \tilde{\sigma}^2}
			{\big( \phi(u_{j-1}) \cdot \tilde{m} - \alpha \cdot \phi(u_{j-1}) \cdot \tilde{m} \big)^2}
			+ \K_{j+1}\big( \alpha \cdot \phi(u_{j-1}) \cdot \tilde{m} \big) \\
		&= \J(u_{j-1}) + \frac{\tilde{\sigma}^2 \cdot \phi(u_{j-1})}{(1 - \alpha)^2 \tilde{m}^2 \cdot \phi(u_{j-1})^2}
			+ \K_{j+1}\big( \alpha \cdot \phi(u_{j-1}) \cdot \tilde{m} \big) \\
		&\leq \J(u_{j-1}) + \frac{\tilde{\sigma}^2}{(1-\alpha)^2\tilde{m}^2 h \cdot u_{j-1}}
			+ \K_{j+1}\big( \alpha\tilde{m}h \cdot u_{j-1} \big) \\
		&= \K_j(u_{j-1}).
	\end{align*}
	In addition, since we know from \thref{OneStepTVDBound} that $\J$ is decreasing in $u_{j-1}$,
	since it is clear that $\tilde{\sigma}^2 ((1-\alpha)^2\tilde{m}^2 h \cdot u_{j-1})^{-1}$ is decreasing in $u_{j-1}$,
	and since we have by assumption that $\K_{j+1}$ is decreasing in $u_{j-1}$ (and therefore in $\alpha\tilde{m}h \cdot u_{j-1}$),
	then so too is $\K_j$ decreasing in $u_{j-1}$.
    \qed
\end{proof}
\newline

\begin{proof}[\textbf{Proof of \thref{ClosedFormKStepTVDBound}}.]
	Assume that the conditions of \thref{KStepTVDBound} hold. Then, by defining
	\[
		b := \frac{\sqrt{2}\left( 3R + (1 + h)\tilde{\sigma}^2 \right)}{\tilde{\sigma}^2 (h\wedge 1)\sqrt{\pi\eta}}
		+ \frac{\left( 5\sqrt{2\pi} + \frac{3\pi}{2} \right) (1 + h) R + h\tilde{\sigma}^2}{\tilde{\sigma}^3\sqrt{2\pi h^3}},
	\]
	it follows immediately from \thref{OneStepTVDBound}
	that $||\L_{Z_1|Z_0 = z} - \L_{\tilde{Z}_1|\tilde{Z}_0 = z}||_{TV} \leq \frac{b}{\sqrt{z}}$ when $k = 1$. \\
	
	For $k \geq 2$, both terms in the bound of \thref{KStepTVDBound} are geometric sums as long as $\alpha \tilde{m} h \neq 1$
	(and of couse we can always choose $\alpha$ to ensure that $\alpha \tilde{m} h \neq 1$), so that we can write
	\begin{align*}
		&||\L_{(Z_1, \dots, Z_k)|Z_0 = z} - \L_{(\tilde{Z}_1, \dots, \tilde{Z}_k)|\tilde{Z}_0 = z}||_{TV} \\
		&\leq \sum_{i=0}^{k-1} \J\big( ( \alpha \tilde{m} h )^i \cdot z \big)
			+ \frac{\tilde{\sigma}^2}{(1 - \alpha)^2 \tilde{m}^2 h \cdot z} \cdot \sum_{i=0}^{k-2} ( \alpha \tilde{m} h )^{-i} \\
		&\leq \frac{b}{\sqrt{z}} \cdot \sum_{i=0}^{k-1} ( \alpha \tilde{m} h )^{-\frac{i}{2}}
			+ \frac{\tilde{\sigma}^2}{(1 - \alpha)^2 \tilde{m}^2 h \cdot z} \cdot \sum_{i=0}^{k-2} ( \alpha \tilde{m} h )^{-i} \\
		&= \frac{b}{\sqrt{z}} \cdot \frac{1 - (\alpha\tilde{m}h)^{-\frac{k}{2}}}{1 - (\alpha\tilde{m}h)^{-\frac{1}{2}}}
			+ \frac{\tilde{\sigma}^2}{(1 - \alpha)^2 \tilde{m}^2 h \cdot z}
			\cdot \frac{1 - (\alpha\tilde{m}h)^{-k+1}}{1 - (\alpha\tilde{m}h)^{-1}} \\
		&= \frac{b \sqrt{\alpha\tilde{m}h} \big( 1 - (\alpha\tilde{m}h)^{-\frac{k}{2}} \big)}{(\sqrt{\alpha\tilde{m}h} - 1) \cdot \sqrt{z}}
			+ \frac{\alpha\tilde{\sigma}^2 \big( 1 - (\alpha\tilde{m}h)^{-k+1} \big)}{(1 - \alpha)^2(\alpha\tilde{m}h - 1)\tilde{m} \cdot z} \\
		&= \frac{c_1 \big| 1 - (\alpha\tilde{m}h)^{-\frac{k}{2}} \big|}{\sqrt{z}}
			+ \frac{c_2 \big| 1 - (\alpha\tilde{m}h)^{-k+1} \big|}{z},
	\end{align*}
	where we set $c_1 := \Big| \frac{b \sqrt{\alpha\tilde{m}h}}{\sqrt{\alpha\tilde{m}h} - 1} \Big|$
	and $c_2 := \Big| \frac{\alpha\tilde{\sigma}^2}{(1 - \alpha)^2(\alpha\tilde{m}h - 1)\tilde{m}} \Big|$.
    \qed
\end{proof}
\newline

\begin{proof}[\textbf{Proof of \thref{WhenPSDBPMustMatchDCBP}.}]
	Given a DCBP, \thref{MatchingMomentsEquations} tells us that any matching PSDBP must have an offspring distribution with mean
	$m(z) = \frac{\tilde{m} \cdot \phi(z)}{z}$ and variance $\sigma^2(z) = \frac{\tilde{\sigma}^2 \cdot \phi(z)}{z}$, $z\in\N_1$.
	The result then follows directly from \thref{MinimumDiscreteVariance}, below.
    \qed
\end{proof}

\begin{lemma}\label{MinimumDiscreteVariance}
    Suppose $\alpha,\beta > 0$ and $d = \alpha - \lfloor\alpha\rfloor$.
	Then there exists a random variable $X$ on $\N_0$ with $\E X = \alpha$ and $\V(X) = \beta$
	if and only if $\beta \geq d(1-d)$.
\end{lemma}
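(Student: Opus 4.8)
The plan is to prove the two directions separately: necessity via a one-line second-moment inequality, and sufficiency via an explicit mixture construction.

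\textbf{Necessity.} Suppose such an $X$ exists. If $\alpha \in \N_0$ then $d = 0$ and the assertion $\beta \geq 0$ is merely nonnegativity of the variance, so assume $\alpha \notin \N_0$, whence $\lceil\alpha\rceil = \lfloor\alpha\rfloor + 1$. The key observation is that the quadratic $f(x) := (x - \lfloor\alpha\rfloor)(x - \lceil\alpha\rceil)$ is nonnegative at every integer: no integer lies strictly between $\lfloor\alpha\rfloor$ and $\lceil\alpha\rceil$, so for integer $x$ the two factors share a sign. Hence $\E f(X) \geq 0$. Expanding $f$, substituting $\E[X^2] = \V(X) + (\E X)^2 = \beta + \alpha^2$, and rewriting $\lfloor\alpha\rfloor + \lceil\alpha\rceil$ and $\lfloor\alpha\rfloor\lceil\alpha\rceil$ in terms of $\alpha$ and $d$, the inequality rearranges exactly to $\beta \geq -(\alpha - \lfloor\alpha\rfloor)(\alpha - \lceil\alpha\rceil) = d(1-d)$.

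\textbf{Sufficiency.} Assume $\beta \geq d(1-d)$. I would realise $X$ as a mixture of two laws on $\N_0$, each with mean $\alpha$. The first, $X_0$, is the minimal-variance law on $\{\lfloor\alpha\rfloor,\lfloor\alpha\rfloor+1\}$ with $\P(X_0 = \lfloor\alpha\rfloor+1) = d$ (which degenerates to the point mass at $\alpha$ when $\alpha \in \N_0$); it has mean $\alpha$ and variance $d(1-d)$. The second, $X_1$, takes value $0$ with probability $1 - \alpha/n$ and value $n$ with probability $\alpha/n$, where $n \in \N_1$ is chosen large enough that $n \geq \alpha$ and $\alpha(n-\alpha) \geq \beta$; then $X_1$ has mean $\alpha$ and variance $\alpha(n-\alpha)$. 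Let $X$ equal $X_1$ with probability $q$ and $X_0$ with probability $1-q$, the choice being independent of the components. Since the conditional mean of $X$ given the mixing coordinate equals $\alpha$ in both cases, the law of total variance gives $\V(X) = (1-q)\,d(1-d) + q\,\alpha(n-\alpha)$, which is continuous in $q$ and attains every value in the interval $[d(1-d),\,\alpha(n-\alpha)]$. As this interval contains $\beta$, choosing the corresponding $q \in [0,1]$ produces a random variable on $\N_0$ with mean $\alpha$ and variance $\beta$, completing the proof.

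I do not expect a serious obstacle: the argument is a repackaging of standard facts about integer-valued random variables. The only points needing care are the degenerate case $\alpha \in \N_0$ (absorbed by letting the two-point law collapse), and checking that each component's support stays in $\N_0$ and that the mixing weight $q$ lies in $[0,1]$ — both guaranteed once $n$ is taken large enough.
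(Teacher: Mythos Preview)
Your proof is correct. The sufficiency direction follows the same mixture-of-two-point-laws idea as the paper, with only a cosmetic difference in the choice of the high-variance component (you use $\{0,n\}$ for large $n$; the paper picks a two-point support depending on whether $\alpha$ is an integer).

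The necessity direction, however, is genuinely different and substantially shorter. The paper first computes directly that every two-point law on $\N_0$ with mean $\alpha$ has variance at least $d(1-d)$, then argues that every finite-support law with mean $\alpha$ decomposes as a mixture of such two-point laws (citing a Kronmal--Peterson-style alias result), and finally handles infinite support by a truncation-and-limit argument. Your observation that $(X-\lfloor\alpha\rfloor)(X-\lceil\alpha\rceil)\ge 0$ on integers collapses all of this into a single second-moment inequality, bypassing the decomposition machinery entirely and handling finite and infinite support uniformly. What your approach buys is brevity and elegance; what the paper's approach buys, arguably, is an explicit structural picture of why the two-point law on $\{\lfloor\alpha\rfloor,\lfloor\alpha\rfloor+1\}$ is extremal.
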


\begin{proof}
	Fix $\alpha>0$. We need to show that
	\begin{enumerate}
		\item[(i)] there is no distribution on $\N_0$ with mean $\alpha$ and variance $\beta < d(1-d)$, and,
		\item[(ii)] for all $\beta \geq d(1-d)$ there exists an $X$ such that $\E X = \alpha$ and $\V(X) = \beta$.
	\end{enumerate}
    We first prove (i).
	Let $Y_{\{y_1, y_2\}}$ be a random variable with $\E Y_{\{y_1, y_2\}} = \alpha$ and support on $\{y_1, y_2\}$, where $y_1, y_2\in\N_0$ and $y_1 \leq \alpha < y_2$.
    Letting $p_{1} := \P( Y_{\{y_1, y_2\}} = y_1 )$ and $p_{2} := \P( Y_{\{y_1, y_2\}} = y_2 )$,
	we have the simultaneous equations 
    \[
        p_{1} + p_{2} = 1 \quad \text{ and } \quad y_1 p_{1} + y_2 p_{2} = \alpha,
    \]
    which imply that
	\begin{equation*}
		p_{1} = \frac{y_2 - \alpha}{y_2 - y_1} \quad\text{and}\quad p_{2} = \frac{\alpha - y_1}{y_2 - y_1}. 
	\end{equation*}	
	The variance of $Y_{y_1,y_2}$ is then given by 
	\begin{align*}
		\V(Y_{\{y_1, y_2\}})
		&=  y_1^2 \cdot \frac{y_2 - \alpha}{y_2 - y_1} + y_2^2 \cdot \frac{\alpha - y_1}{y_2 - y_1} - \alpha^2 \nonumber \\
		&= -y_1y_2 + \alpha(y_1 + y_2) - \alpha^2.
	\end{align*}
	When $y_2 > \alpha$, $\V(Y_{\{y_1, y_2\}})$ is minimised w.r.t.\ $y_1$ by taking $y_1$ as large as possible, and when $y_1 \leq \alpha$, $\V(Y_{\{y_1, y_2\}})$ is minimised w.r.t.\ $y_2$ by taking $y_2$ as small as possible.
    Since $y_1, y_2\in\N_0$ and $y_1 \leq \alpha < y_2$, this implies that $\V(Y_{\{y_1, y_2\}})$ is minimised w.r.t. $y_1$ and $y_2$ when $y_1 = \lfloor \alpha \rfloor$
	and $y_2 = \lfloor \alpha \rfloor + 1$. Consequently, 
	\begin{align*}
		\V\big(Y_{\{y_1,y_2\}}\big) &\geq \V\big(Y_{\{\lfloor\alpha\rfloor, \lfloor\alpha\rfloor + 1\}}\big) \\
		&= -\lfloor\alpha\rfloor(\lfloor\alpha\rfloor + 1) + \alpha(\lfloor\alpha\rfloor + (\lfloor\alpha\rfloor + 1)) - \alpha^2 \\
		&= (\alpha - \lfloor\alpha\rfloor)(1 - (\alpha - \lfloor\alpha\rfloor)) \\
		&= d(1 - d).
	\end{align*}
    Because the family of distributions $\big\{ Y_{\{y_1, y_2\}} \big\}_{y_1, y_2\in\N_0,\;y_1 \leq \alpha < y_2}$ contains every candidate for $X$ with support on two natural values (referred to as a two-point distribution),
    we conclude that if $\beta < d(1-d)$, then $X$ cannot possess a two-point distribution. \\

    Next consider the $k$-point distributions, for $k > 2$.
    Every $k$-point distribution, $K_{\{n_1, \dots, n_k\}}$, with $n_1, \dots, n_k \in \N_0$ and $\E K_{\{n_1, \dots, n_k\}} = \alpha$
    can be expressed as a mixture of $k-1$ two-point distributions, all with mean $\alpha$.
    This follows from the linearity of expectation, and can be formalised by modifying the arguments in the proof of \cite[Theorem 1]{kronmal79}.

    Specifically, we can write
    \[
        K_{\{n_1, \dots, n_k \}} \stackrel{d}{=} \sum_{i=1}^{k-1} Y_{\{ u_i, v_i \}}\1\{W=i\},
    \]
    where $k \in \mathbb{N}_0$, $W$ is a random variable taking values in $1, \dots, k-1$,
    $u_i, v_i \in \{n_1, \dots, n_k \}$, $\mathbb{E}Y_{\{u_i, v_i\}} = \alpha$ for all $i=[k-1]$,
    and $W$ and each $Y_{\{u_i, v_i\}}$ are all mutually independent.

    We then obtain
	\begin{align*}
		\V(K_{\{n_1, \dots, n_k \}})
        &= \sum_{i=1}^{k-1} \P(W=i) \cdot \E Y_{\{ u_i, v_i \}}^2 - \alpha^2 \\
		&= \sum_{i=1}^{k-1} \P(W=i) \cdot \V(Y_{\{ u_i, v_i \}}) \\
		&\geq \sum_{i=1}^{k-1} \P(W=i) \cdot d(1-d) \\
		&= d(1-d),
	\end{align*}
    and can therefore conclude that if $\beta < d(1-d)$, then $X$ cannot possess a $k$-point distribution. \\

	This same approach can be extended to discrete distributions with infinite support and finite mean $\alpha$.
    Let $I_{\{n_1, n_2, \dots\}}$ have such a distribution, where $0 \leq n_1 < n_2 < \dots$ and $n_i \in \N_0$ for all $i \in \N_1$.
    For sufficiently large $N \in \N_1$, we can repeat the same trick of decomposing $I_{\{n_1, n_2, \dots\}}$ into two-point distributions,
    but consider the tail $\{n_N, n_{N+1}, \dots\}$ as a single `point' when performing the decomposition.

    Specifically, we can write
    \[
        I_{\{n_1, n_2, \dots\}} \stackrel{d}{=} \sum_{i=1}^{N-2} Y_{\{u_i,v_i\}} \1\{ W = i \} 
            + I_{\{n^*, n_N, n_{N+1}, \dots\}} \1\{W = N - 1\},
    \]
    where $I_{\{n^*, n_N, n_{N+1}, \dots\}}$ is a random variable supported on the tail $\{n_N, n_{N+1}, \dots\}$ and a value $n^* \in \{n_1, \dots, n_{N-1} \}$ such that $(n^* - \alpha) \cdot p_{n^*} = -\sum_{i=N}^{\infty} (n_i - \alpha) \cdot p_{n_i}$
    (using the notation $p_n := \P( I_{\{n_1, n_2, \dots\}} = n )$).
    Analogous to the $k$-point case, we also assume that $u_i, v_i \in \{n_1, \dots, n_{N-1} \}$, $u_i < v_i$, for all $i \in [N-2]$,
    $W$ is a mixing distribution taking values in $1, \dots, N-1$, and all the random variables are mutually independent.

    Then, in the same manner as for $\V(K_{\{n_1, \dots, n_k \}})$, we have
    \[
        \V\big( I_{\{n_1, n_2, \dots\}} \big) \geq \sum_{i=1}^{N-2} \P(W = i) \cdot d(1-d).
    \]
    Since $\sum_{i=N}^{\infty} (n_i - \alpha) \cdot p_{n_i} \to 0$ as $N \to \infty$ and $\P(W = N-1) \to 0$ as $N \to \infty$,
    we can take the limit as $N \to \infty$ to get
    $\V\big( I_{\{n_1, n_2, \dots\}} \big) \geq d(1-d)$.
    
    With all possible cases considered, we see that there is no distribution on $\N_0$ with $\beta < d(1-d)$. \\

	It remains to prove (ii). Fix $\alpha,\beta\in\R_{>0}$, with $\beta \geq d(1-d)$.
	We want to show that there exists a RV $X$ on $\N_0$ such that $\E X = \alpha$ and $\V(X) = \beta$.
	If $\beta = d(1-d)$ we can simply take $X = Y_{\{\lfloor\alpha\rfloor, \lfloor\alpha\rfloor + 1\}}$.
	
	Otherwise, if $\beta > d(1-d)$, we define
	\begin{equation*}
		Y^* = \begin{cases}
			Y_{\{ \lfloor\alpha\rfloor, y \}},\; y:=
				\big\lceil \frac{\beta + \alpha^2 - \alpha\lfloor\alpha\rfloor}{\alpha - \lfloor\alpha\rfloor} \big\rceil & \text{if } \alpha\not\in\N_1 \\
			Y_{\{ \alpha - 1, \lceil \alpha + \beta \rceil \}} & \text{otherwise}
		\end{cases}
	\end{equation*}
	and use our previous equation for the variance of a two-point distribution to see that $\V(Y^*) \geq \beta$,
	while by construction $\E Y^* = \alpha$.
	
	Therefore, defining $W\sim\text{Ber}(q)$, $q\in[0,1]$, to be a Bernoulli RV with arbitrary parameter,
	the mixture
    \[
        M_q := Y_{\{\lfloor\alpha\rfloor, \lfloor\alpha\rfloor + 1\}}\cdot \1\{W=0\} + Y^*\cdot \1\{W=1\}
    \]
	(for $Y_{\{\lfloor\alpha\rfloor, \lfloor\alpha\rfloor + 1\}}$, $Y^*$, and $W$ mutually independent) has mean $\alpha$ and variance
	\begin{equation*}
		\V(M_q) = (1-q) \cdot d(1-d) +  q \cdot \V(Y^*).
	\end{equation*}
	By varying $q$ over $[0, 1]$ we can continuously scale $\V(M_q)$ over the range $[d(1-d),\V(Y^*)]$,
	so there exists a $q^*\in[0,1]$ such that $\V(M_{q^*}) = \beta$.
	We can then simply take $X \equiv M_{q^*}$, and so $\E X = \alpha$ and $\V(X) = \beta$.
    \qed
\end{proof}
\newline

\begin{proof}[\textbf{Proof of \thref{WhenDCBPMustMatchPSDBP}.}]
    We will show that satisfying the conditions of \thref{WhenDCBPMustMatchPSDBP} is equivalent to satisfying Equations
    \eqref{eqn:DCBPPSDBPMatchingMean} and \eqref{eqn:DCBPPSDBPMatchingVariance} of \thref{MatchingMomentsEquations},
    i.e.\ $z \cdot m(z) = \tilde{m} \cdot \phi(z)$, and $z \cdot \sigma^2(z) = \st^2 \cdot \phi(z)$. There are four cases to consider:
    \begin{enumerate}
        \item[(a)] When $z = 0$, taking $\phi(0) = 0$ satisfies \eqref{eqn:DCBPPSDBPMatchingMean} and \eqref{eqn:DCBPPSDBPMatchingVariance}.
        \item[(b)] For $z \in \N_1$, $m(z) = 0 \implies \sigma^2(z) = 0$,
            and taking $\phi(z) = 0$ for any such $z$ satisfies \eqref{eqn:DCBPPSDBPMatchingMean} and \eqref{eqn:DCBPPSDBPMatchingVariance}.
        \item[(c)] For $z \in \N_1$, $\sigma^2(z) = 0$ and $m(z) \neq 0 \implies m(z) \in 
            \N_1$ (by \thref{MinimumDiscreteVariance}).
            For such a $z$, since we require $\phi(z) \neq 0$ to satisfy \eqref{eqn:DCBPPSDBPMatchingMean},
            we must have $\st^2 = 0$ and $\mt \in \N_1$ (using \thref{MinimumDiscreteVariance}).
            Hence, if we take $\mt = 1$ and $\phi(z) = z \cdot m(z)$, 
            \eqref{eqn:DCBPPSDBPMatchingMean} and \eqref{eqn:DCBPPSDBPMatchingVariance} are satisfied.
        \item[(d)] Otherwise, for $z \in \N_1$, $m(z) \neq 0$, $\sigma^2(z) \neq 0$, we     require $\mt \neq 0$, $\st \neq 0$, and $\phi(z) \neq 0$,
            and can equate $m(z) / \mt = \phi(z) / z$ and $\sigma^2(z) / \st^2 = \phi(z) / z$ to get $m(z) / \mt = \sigma^2(z) / \st^2$,
            yielding $m(z) = k \cdot \sigma^2(z)$ when we set $k := \mt / \st^2$.
    \end{enumerate}
    While cases (a) and (b) impose no restrictions on the values of $\mt$ and $\st$, (c) and (d) impose mutually contradictory restrictions.

    Hence, if, for any $z \in \N_1$, $\sigma^2(z) = 0$ and $m(z) \neq 0$, we need $\sigma^2(z) = 0$ for all attainable $z \in \N_1$,
    which is sufficient to satisfy \thref{WhenDCBPMustMatchPSDBP}.

    Otherwise we require that there exists a $k > 0$ such that, for all attainable $z \in \N_1$, $m(z) = k \cdot \sigma^2(z)$:
    Condition (i) of \thref{WhenDCBPMustMatchPSDBP}.
    This, however, is only a necessary condition. We need to strengthen it further to create a necessary and sufficient condition for satisfying \thref{WhenDCBPMustMatchPSDBP}. \\

    Suppose that a $k > 0$ satisfying Condition (i) exists. It turns out that the need to satisfy \eqref{eqn:DCBPPSDBPMatchingMean} implies Condition (ii).
    
    To show this, suppose that there exists a constant $h > 0$ such that for each attainable $z \in \N_1$, there is an $n(z) \in \N_0$ with $m(z) = h \cdot n(z)$.
    That is, suppose that the set $H := \big\{ h \in \R_{>0} : m(z) \in h \cdot \N_0 \text{ for all attainable } z \in \N_1 \big\}$ is non-empty.
    Then we can satisfy \eqref{eqn:DCBPPSDBPMatchingMean} by taking $\phi(z) := z \cdot n(z)$ and $\mt := h$.

    Suppose, on the other hand, that no such $h$ exists, so that $H$ is empty.
    Then there exist attainable $z_1, z_2 \in \N_1$ such that the sets
    $H_1 := \big\{ h_1 \in \R_{>0} : m(z_1) \in h_1 \cdot \N_0 \big\}$ and $H_2 := \big\{ h_2 \in \R_{>0} : m(z_2) \in h_2 \cdot \N_0 \big\}$ are disjoint
    (which requires that $m(z_1) \neq 0$ and $m(z_2) \neq 0$).

    For $z = z_1$, we can rewrite \eqref{eqn:DCBPPSDBPMatchingMean} as $\mt = \frac{z_1 \cdot m(z_1)}{\phi(z_1)}$. Since $z_1 \in \N_1$ and we require $\phi(z_1) \in \N_1$,
    we have that $\mt \in \left\{ \frac{z_1 \cdot m(z_1)}{x} : x \in \N_1 \right\}$.
    Rewriting $m(z_1) = h_1^{(x)} \cdot z_2 \cdot x$, where $h_1^{(x)} \in H_1$,
    we find that $\mt \in \left\{ h_1^{(x)} \cdot z_1 \cdot z_2 : x \in \N_1 \right\} \subseteq z_1 \cdot z_2 \cdot H_1$.

    We can repeat the above for $z = z_2$ to find that $\mt \subseteq z_1 \cdot z_2 \cdot H_2$.
    But this is a contradiction, since $(z_1 \cdot z_2 \cdot H_1) \cap (z_1 \cdot z_2 \cdot H_2) = z_1 \cdot z_2 \cdot (H_1 \cap H_2) = \varnothing$. \\

    It is worth emphasising some consequences of Condition (ii):
    ignoring the trivial case where $m(z) = 0$ for all attainable $z$, if $H$ is non-empty then it contains a maximal element;
    since $h \leq m(z)$ (i.e.\ taking $m(z) = h \cdot 1$) for all $z \in \N_1$ such that $m(z) \neq 0$, $h^{max} \leq m(z^{min})$,
    where we define $m(z^{min}) := \min\{ m(z) : z\in\N_1 \text{ s.t. } m(z) \neq 0 \}$
    and $h^{max} \in H$ represents the maximal element.
    
    If $H$ is non-empty, then it will also contain countably many elements;
    $h \in H \implies \frac{h}{x} \in H$ for any $x \in \N_1$. \\

    For a DCBP satisfying Conditions (i) and (ii), (iii) will follow from \thref{MinimumDiscreteVariance}.
    For such a DCBP, Condition (i) allows us to simplify \eqref{eqn:DCBPPSDBPMatchingVariance} to
    \[
        z \cdot m(z) = k \cdot \st^2 \cdot \phi(z),
    \]
    where, from (ii), we take $m(z) = h \cdot n(z)$ and $\phi(z) = z \cdot n(z)$.
    Hence we require $\st^2 = \frac{h}{k}$, for some $h \in H$.
    Then, by \thref{MinimumDiscreteVariance}, the DCBP can have matching moments if and only if
    \[
        \frac{h}{k} \geq \left( h - \lfloor h \rfloor \right)\left( 1 - h + \lfloor h \rfloor \right)
    \]
    for at least one $h \in H$.
	
	For values of $h \in H \cap [0, 1]$, $\left( h - \lfloor h \rfloor \right)\left( 1 - h + \lfloor h \rfloor \right) = h (1 - h)$,
	so the previous inequality becomes $\frac{1}{k} \geq (1 - h)$.
	If this holds for any $h \in H \cap [0, 1]$, it will hold for $\sup_{h<1}\{h\in H\}$. This implies Condition (iii).
    \qed
\end{proof}
\newline

\begin{proof}[\textbf{Proof of \thref{TVDEstimator}.}]
    Given $X$ and $Y$ are defined on a countable space, it follows from \eqref{eqn:TVDEquivalentSum} that
    \begin{align*}
        || \L_X - \L_Y ||_{TV} &= \frac{1}{2} \sum_{n\in\mathcal{X}} | \P(X = n) - \P(Y = n) | \\
            &= \frac{1}{2} \sum_{n\in\mathcal{X}} \frac{\big| \P(X = n) - \P(Y = n) \big|}{\P(X = n)} \cdot \P(X = n) \\
        &= \frac{1}{2} \cdot \E_X \left( \frac{| \L_X(X) - \L_Y(X) |}{\L_X(X)} \right).
    \end{align*}

    With $\theta := \frac{1}{2} \cdot \E_X \left( \frac{| \L_X(X) - \L_Y(X) |}{\L_X(X)} \right)$, it follows directly that $\E_X ( \hat{\theta}_N ) = \theta$.
    Hence $\hat{\theta}_N$ is an unbiased estimator for $\theta$, and, by the law of large numbers, for any $\epsilon > 0$,
    $\lim_{N \to \infty} \P\left( \big| \hat{\theta}_N - \theta \big| > \epsilon \right) = 0$,
    so it is a consistent one as well.
    \qed
\end{proof}

\section*{Acknowledgements}

Sophie Hautphenne would like to thank the Australian Research Council (ARC) for support through her Discovery Project DP200101281.

\bibliographystyle{plain}
\bibliography{main}

\end{document}